\theoremstyle{plain}
\newtheorem{theorem}{Theorem}
\newtheorem{lemma}[theorem]{Lemma}
\newtheorem{proposition}[theorem]{Proposition}
\newtheorem{hypothesis}[theorem]{Hypothesis}
\newtheorem{corollary}[theorem]{Corollary}
\numberwithin{theorem}{section}
\numberwithin{equation}{theorem}
\theoremstyle{definition}
\newtheorem{definition}[theorem]{Definition}
\newtheorem{example}[theorem]{Example}
\newtheorem{remark}[theorem]{Remark}
\newtheorem{question}[theorem]{Question}
\newtheorem*{question*}{Question}
\DeclareMathOperator{\ldt}{ldt}
\DeclareMathOperator{\hdet}{hdet}
\DeclareMathOperator{\Ext}{Ext}
\DeclareMathOperator{\Hom}{Hom}
\DeclareMathOperator{\Mod}{Mod}
\DeclareMathOperator{\Aut}{Aut}
\DeclareMathOperator{\gr}{gr}
\DeclareMathOperator{\ML}{ML}
\DeclareMathOperator{\LND}{LND}
\begin{document}
\title{Nakayama automorphism and applications}

\author{J.-F. L\"u}
\address{(L\"u) Department of Mathematics,
Zhejiang Normal University, Jinhua 321004, China}
\email{jiafenglv@zjnu.edu.cn}

\author{X.-F. Mao}
\address{(Mao) Department of Mathematics,
Shanghai University, Shanghai 200444, China}
\email{xuefengmao@shu.edu.cn}

\author{J.J. Zhang}
\address{(Zhang) Department of Mathematics,
University of Washington, Seattle 98105, USA}
\email{zhang@washington.edu}

\begin{abstract}
Nakayama automorphism is used to study group actions and Hopf 
algebra actions on Artin-Schelter regular algebras of global 
dimension three.
\end{abstract}

\subjclass[2010]{Primary 16E40, 16S36; Secondary 16E65}

\keywords{Nakayama automorphism, Artin-Schelter regular algebra, 
group action, Hopf algebra action, automorphism group, 
cancellation problem}


\maketitle


\setcounter{section}{-1}

\section{Introduction}
\label{xxsec0}
Throughout let $k$ be an algebraically closed field of characteristic zero.
All algebras and Hopf algebras are over $k$.

The main motivation comes from noncommutative invariant theory. In a recent
paper \cite{CKWZ}, Chan-Kirkman-Walton-Zhang classified and studied
finite dimensional Hopf algebra actions on Artin-Schelter regular algebras of global dimension two with trivial homological determinant.
One ultimate goal is to carry out the same project for
Artin-Schelter regular algebras of  global dimension three:

\begin{question}\cite[Question 8.1]{CKWZ}.
\label{xxque0.1}
Let $A$ be an Artin-Schelter regular algebra of global dimension 3. 
What are the finite dimensional Hopf algebras that act inner faithfully 
on and preserve the grading of $A$ with trivial homological determinant?
\end{question}

Keeping in mind this ultimate goal, let us ask some interesting and
closely related questions that make sense even in higher dimensional cases.

\begin{question}
\label{xxque0.2}
Let $A$ be  an Artin-Schelter regular algebra
and let $H$ be a finite dimensional  Hopf algebra acting on  $A$
inner faithfully.
\begin{enumerate}
\item[(1)]
Under what hypotheses on $A$, must $H$ be semisimple?
\item[(2)]
Under what hypotheses on $A$, must $H$ be a group algebra?
\item[(3)]
Under what hypotheses on $A$, must $H$ be the dual of a group algebra
(namely, its dual Hopf algebra $H^\circ$ is a group algebra)?
\item[(4)]
Assuming $H$ is semisimple, under what hypotheses on $A$, must $H$ be
a group algebra (respectively, the dual of a group algebra)?
\item[(5)]
Under what hypotheses on $A$, does every $H$-action preserve the 
grading of $A$? Assume, further, that $H$ is a group algebra 
(respectively, the dual of a group algebra). Under what hypotheses
on $A$, does the $H$-action preserve the grading of $A$?
%
\end{enumerate}
\end{question}

Question \ref{xxque0.2}(2,4) was partially answered when $A$ is a
commutative domain by Etingof-Walton in \cite[Theorem 1.3]{EW}
and when $A$ is a skew polynomial ring with generic parameters
in \cite[Theorems 0.4 and 4.3]{CWZ}.
These questions arise in other subjects such as
theory of infinite dimensional Hopf algebras, study of braided 
Hopf algebras and Nichols
algebras, study of skew (or twisted) Calabi-Yau algebras, noncommutative
algebraic geometry and representation theory of quivers. Another interesting
question is \cite[Question 5.9]{EW}: If $H$ is semisimple and $A$ is PI
(namely, $A$ satisfies a polynomial identity), must then
${\text{PIdeg}}(H^{\circ}) \leq {\text{PIdeg}}(A)^2$?
This paper answers some of these questions in special cases.

One basic idea (or method) in this paper is the homological identity given in
\cite[Theorem 0.1]{CWZ} which roughly says that the Nakayama automorphism
controls, in some aspects, the class of Hopf algebras that act on a
given Artin-Schelter (or AS, for short) regular algebra $A$. 
If we know the Nakayama
automorphism of $A$, then we can grab information about this class of
Hopf algebras. Examples of such results are given in
\cite[Theorems 0.4 and 0.6]{CWZ}.
One immediate consequence of \cite[Theorem 0.6]{CWZ} is the following.

\begin{corollary}
\label{xxcor0.3}\cite[Comment after Theorem 0.6]{CWZ}
Let $A$ be the 3-dimensional Sklyanin algebra and
let $H$ be a finite dimensional Hopf algebra acting on $A$ inner
faithfully with trivial homological determinant.
Then $H$ is semisimple.
\end{corollary}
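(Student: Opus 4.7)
My plan is to peel the problem back first to the Nakayama automorphism of $A$, and then to the antipode of $H$, using the homological identity \cite[Theorem 0.1]{CWZ} that powers \cite[Theorem 0.6]{CWZ}. The content of the corollary is that two simplifying hypotheses of that theorem are satisfied by the $3$-dimensional Sklyanin algebra: the homological determinant is trivial by assumption, and the Nakayama automorphism $\mu_A$ is the identity because $A$ is Calabi-Yau. Granting those, everything else should follow formally.

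The first step is to record that the $3$-dimensional Sklyanin algebra is Calabi-Yau of dimension three, so that $\mu_A=\operatorname{id}_A$. This is well known: $A$ is a cubic Artin-Schelter regular algebra of global dimension three defined by a cyclically invariant superpotential, and one reads off from its Koszul bimodule resolution that $\mu_A$ is trivial. With $\mu_A=\operatorname{id}$ and $\hdet=\epsilon$ in hand, the homological identity of \cite[Theorem 0.1]{CWZ} specializes to
\[
S_H^{2}(h)\cdot a \;=\; h\cdot a \qquad \text{for all } h\in H,\ a\in A.
\]

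Next, I would promote this identity of operators on $A$ to an identity on $H$ using inner faithfulness. The subspace spanned by $\{h-S_H^{2}(h):h\in H\}$ generates a Hopf ideal $J\subseteq H$: it is a coideal because $\Delta$ sends each generator $h-S_H^{2}(h)$ into $J\otimes H+H\otimes J$, and it is stable under $S$ up to sign since $S(h-S_H^{2}(h))=-(S_H^{2}-\operatorname{id})(S(h))$. By construction $J$ annihilates $A$, so inner faithfulness forces $J=0$ and hence $S_H^{2}=\operatorname{id}_H$. The Larson--Radford theorem in characteristic zero then yields that $H$ is semisimple.

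The only real points of difficulty are the verification of the Calabi-Yau property of the Sklyanin algebra and the careful matching of the hypotheses of \cite[Theorem 0.6]{CWZ} so that the homological identity specializes in precisely the way indicated above; everything else is a formal consequence of inner faithfulness and Larson--Radford. A minor subtlety to watch is that \emph{a priori} the identity in \cite[Theorem 0.1]{CWZ} may hold only modulo an inner twist, so one must check that for the Sklyanin algebra the trivialization of $\mu_A$ is on the nose and not merely up to an inner automorphism.
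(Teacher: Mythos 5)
Your proposal is correct and follows the same route the paper takes: the paper proves this corollary simply by observing that the Nakayama automorphism of the $3$-dimensional Sklyanin algebra is the identity (Calabi--Yau plus the fact that a connected graded algebra has only trivial inner automorphisms) and then invoking \cite[Theorem 0.6]{CWZ}, whose proof is exactly your specialization of the homological identity to $S^2=\operatorname{id}$ via inner faithfulness followed by Larson--Radford. The only difference is cosmetic: you inline the proof of the cited theorem, working with the Hopf ideal generated by $\operatorname{im}(\operatorname{id}-S^2)$ in $H$, whereas \cite{CWZ} works dually with $K=H^{\circ}$ and the matrix coefficients $y_{ij}$, where the identity $\eta_D\circ S^2=\eta_{\mu_A^{\tau}}$ gives $S_K^2=\operatorname{id}$ directly on the Hopf algebra generators.
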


One interesting question is to classify all possible
Hopf algebras that act inner faithfully on a 3-dimensional (PI) 
Sklyanin algebra.
Corollary \ref{xxcor0.3} holds because the Nakayama automorphism of the
3-dimensional Sklyanin algebra is the identity map, which satisfies
the hypotheses of \cite[Theorem 0.6]{CWZ}.
For this reason, it is important to understand and describe  explicitly
the Nakayama  automorphism.

Several authors calculated the  Nakayama automorphism of skew (or twisted)
Calabi-Yau algebras \cite{BrZ, GY, LWW, RRZ1, RRZ2, Ye}. In general the Nakayama
automorphism is a subtle invariant and is  difficult to compute. Several
researchers have been investigating the Nakayama automorphism and its
applications. In \cite{RRZ1, RRZ2} Rogalski-Reyes-Zhang proved several
homological identities about the Nakayama automorphism. Liu-Wang-Wu
studied the Nakayama automorphism for Ore extensions; in
particular, they gave a description of the Nakayama automorphism
of $A[x;\sigma,\delta]$ \cite[Theorem 0.2]{LWW}.

This paper will focus on the following classes of AS regular algebras:

$A(1)=k_{p_{ij}}[t_1,t_2,t_3]$, where $p_{ij}\in k^\times:=k\setminus \{0\}$,
is generated by $t_1,t_2,t_3$ and subject to the relations
\begin{equation}
\label{E0.3.1}\tag{E0.3.1}
t_j t_i=p_{ij} t_i t_j,\quad {\text{for all }}\; 1\leq i<j\leq 3.
\end{equation}

$A(2)$ is generated by $t_1,t_2,t_3$ and subject to the relations
\begin{equation}
\label{E0.3.2}\tag{E0.3.2}
t_1t_2-t_2t_1=t_1t_3-t_3t_1=t_3t_2-pt_2t_3-t_1^2=0
\end{equation}
where $p\in k^\times $.

$A(3)$ is generated by $t_1,t_2,t_3$ and subject to the relations
\begin{equation}
\label{E0.3.3}\tag{E0.3.3}
(t_2+t_1) t_1-t_1 t_2=t_3 t_1-q t_1 t_3=t_3 t_2-q (t_2+t_1)t_3=0
\end{equation}
where $q\in k^\times $.

$A(4)$ is generated by $t_1,t_2,t_3$ and subject to the relations
\begin{equation}
\label{E0.3.4}\tag{E0.3.4}
(t_2+t_1)t_1-t_1t_2= t_3t_1-p t_1t_3=t_3t_2-p t_2t_3=0
\end{equation}
where $p\in k^\times $.

$A(5)$ is generated by $t_1,t_2,t_3$ and subject to the relations
\begin{equation}
\label{E0.3.5}\tag{E0.3.5}
(t_2+t_1)t_1-t_1t_2=(t_3+t_2+t_1)t_1-t_1t_3=(t_3+t_2+t_1)t_2-(t_2+t_1)t_3=0.
\end{equation}

$A(6)$ is the graded down-up algebra $A(\alpha, \beta)$ which
is generated by $x,y$ and subject to the relations
\begin{equation}
\label{E0.3.6}\tag{E0.3.6}
x^2y-\alpha xyx -\beta yx^2=xy^2-\alpha yxy-\beta y^2x=0
\end{equation}
where $\alpha \in k$ and $\beta\in k^\times$.

$A(7):=S(p)$ is generated by $x,y$ and subject to the relations
\begin{equation}
\label{E0.3.7}\tag{E0.3.7}
x^2y-pyx^2= xy^2+py^2x=0
\end{equation}
where $p\in k^\times$.

These are some pair-wise non-isomorphic noetherian AS regular 
algebras of global dimension three. The Nakayama automorphism of 
these algebras will be given explicitly in Section \ref{xxsec1}.

Going back to noncommutative invariant theory, we adapt the standard
hypotheses given in \cite{CWZ}. The first application of
the Nakayama automorphism is the following result which answers
Questions \ref{xxque0.1} and  \ref{xxque0.2}(2) in some special cases.

\begin{theorem}
\label{xxthm0.4}
Let $A$ be one of the following algebras and $H$ act on $A$
satisfying Hypothesis {\rm{\ref{xxhyp2.1}}}.
Then $H$ is a commutative group algebra.
\begin{enumerate}
\item[(1)]
$A(1)$ where elements $p_{12}^{-2} p_{23}p_{31}$, $p_{31}^{-2} p_{12}p_{23}$
and $p_{23}^{-2} p_{31}p_{12}$ are not roots of unity. A special case is
when $p_{12}$ and $p_{13}$ are roots of unity and $p_{23}$ is not.
\item[(2)]
$A(2)$ where $p$ is not a root of unity.
\item[(3)]
$A(3)$ where $q$ is not a root of unity.
\item[(4)]
$A(4)$ where $p$ is not a root of unity.
\item[(5)]
$A(5)$.
\item[(6)]
$A(6)=A(\alpha, \beta)$ where $\alpha\neq 0$ and $\beta$ is not a root of unity.
\end{enumerate}
\end{theorem}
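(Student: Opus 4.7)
The plan is to combine the explicit computations of the Nakayama automorphism $\mu_A$ for each family $A(1),\ldots,A(6)$ (carried out in Section \ref{xxsec1}) with the homological identity of \cite[Theorem 0.1]{CWZ}. That identity says that for any $H$-action on $A$ satisfying Hypothesis \ref{xxhyp2.1}, the Nakayama automorphism $\mu_A$ is $H$-invariant in a precise homological sense, so the induced action of $H$ on the degree-one piece $A_1$ must centralize $\mu_A|_{A_1}$. The strategy is a case-by-case refinement of \cite[Theorem 0.6]{CWZ} (which underlies Corollary \ref{xxcor0.3}, where $\mu_A$ is the identity): here $\mu_A$ is far from trivial, and the parameter hypotheses are chosen precisely so that $\mu_A$ has eigenvalues of infinite order on an appropriate subspace.

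For each case I would extract from Section \ref{xxsec1} the action of $\mu_A$ on $A_1$. In $A(1)$ the Nakayama automorphism is diagonal in the basis $\{t_1,t_2,t_3\}$ with eigenvalues that are monomials in the $p_{ij}$, and the three quantities $p_{12}^{-2}p_{23}p_{31}$, $p_{31}^{-2}p_{12}p_{23}$, $p_{23}^{-2}p_{31}p_{12}$ appearing in (1) are precisely the pairwise ratios of these eigenvalues (up to a natural reindexing); thus the non-root-of-unity hypothesis exactly guarantees that the three eigenvalues of $\mu_A|_{A_1}$ are pairwise non-torsion multiples of each other. In $A(2),A(3),A(4),A(5)$ the Nakayama automorphism is upper-triangular on $A_1$ with a distinguished fixed line along $t_1$, and the quotient action on $A_1/k t_1$ is diagonal with eigenvalues built from $p$ or $q$ (or from fixed scalars in the parameterless case $A(5)$); the hypothesis forces one such eigenvalue to have infinite order. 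For $A(6)$, the Nakayama automorphism of the down-up algebra is given by \cite[Theorem 0.2]{LWW}, and the combined hypothesis $\alpha\neq 0$, $\beta$ not a root of unity produces the required non-torsion eigenvalue.

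With this input the centralizer of $\mu_A|_{A_1}$ in $GL(A_1)$ is contained in a torus (after quotienting by the nilpotent part where necessary), so the image of $H\to\End_k(A_1)$ lies in a commutative diagonalizable subgroup. Standard Hopf algebraic arguments, together with inner faithfulness and the trivial homological determinant condition packaged into Hypothesis \ref{xxhyp2.1}, then force $H$ to be a commutative group algebra: in characteristic zero over an algebraically closed field, a finite dimensional Hopf algebra embedding (inner faithfully) into a commutative diagonalizable group scheme is necessarily the group algebra of a finite abelian group. The main obstacle will be cases (2)--(5), where $\mu_A$ is not diagonal on the whole of $A_1$ and one must argue that the $H$-action nonetheless preserves a common eigenbasis despite the nilpotent contribution; this reduces to showing that the $t_1$-direction is $H$-stable, which should follow from its intrinsic characterization as the unique $\mu_A$-fixed line. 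Case (5), which has no free parameter, will require a separate hands-on argument based on $\mu_A$ itself having infinite order as a graded automorphism rather than on a single scalar eigenvalue.
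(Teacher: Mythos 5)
Your proposal correctly identifies the engine of the proof --- the explicit Nakayama automorphisms computed in Section 1 together with the homological identity of Theorem 2.2 and the resulting finite order of $\eta_{\mu_A^{\tau}}$ --- and your observation that the parameter hypotheses make the relevant eigenvalue ratios non-torsion is exactly right for cases (1), (2) and (6). But there are two genuine gaps. First, the identity does not say that the image of $H$ in $\End_k(A_1)$ centralizes $\mu_A|_{A_1}$: it says that conjugation by ${\mathbb M}^{\tau}$ on the matrix $(y_{ij})$ of coaction coefficients, which are elements of $K=H^{\circ}$ rather than scalars, has finite order. In the diagonalizable cases this gives $(\lambda_i\lambda_j^{-1})^N y_{ij}=y_{ij}$ and hence $y_{ij}=0$ for $i\neq j$ (Lemma 2.3(1)); in cases (3)--(5), where $\mu_A$ is not diagonalizable, the paper needs an explicit iteration of $\eta_{\mu_A^{\tau}}$ (Lemmas 3.3 and 3.4) plus the characteristic-zero fact that a finite-dimensional Hopf algebra contains no nonzero primitive element (Lemma 3.1) to kill the remaining off-diagonal $y_{ij}$. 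Your ``centralizer in $GL(A_1)$ is contained in a torus'' step presupposes that $H$ already acts through a group of linear maps, which is precisely what has to be proved; and showing only that the $t_1$-direction is $H$-stable is strictly weaker than the full diagonality of the coaction that the argument requires.

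Second, and more seriously, knowing that $K$ is generated by grouplike elements (hence is a group algebra) does not yield commutativity: the grouplikes $y_{11},\dots,y_{nn}$ could generate a nonabelian group. The paper extracts commutativity by applying the coaction $\rho$ to the defining relations of $A$ (for instance $v_jv_i=p_{ij}v_iv_j$ yields $y_{ii}y_{jj}=y_{jj}y_{ii}$ in Lemma 2.3(3), and for $A(6)$ the relation $x^2y-\alpha xyx-\beta yx^2=0$ with $\alpha\neq 0$ is exploited in Proposition 2.10). This is not a formality: Proposition 2.16 shows that for $A(0,\beta)$ --- same non-torsion eigenvalue ratio $\beta^2$, but $\alpha=0$ --- the dual $K$ can be the group algebra of a nonabelian quotient of $\langle a,b\mid a^2b=ba^2,\ ab^2=b^2a\rangle$. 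So the hypothesis $\alpha\neq 0$ in case (6) is there for the commutativity step, not to ``produce the required non-torsion eigenvalue'' as you state. A minor further point: trivial homological determinant is not part of Hypothesis 2.1 and is not assumed in Theorem 0.4; it only enters in Theorem 0.5.
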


The commutative group algebra $H$ in Theorem \ref{xxthm0.4} can be
described explicitly. Theorem \ref{xxthm0.4} also holds for the following AS
regular algebras.

\begin{enumerate}
\item[(0.4.8)]
The class ${\bf D}$ of AS regular algebras of global
dimension five defined in \cite{LWW} with generic parameters.
\item[(0.4.9)]
The class ${\bf G}$ of AS regular algebras of global
dimension five defined in \cite{LWW} with generic parameters.
%
\end{enumerate}

Note that if $A=A(0,\beta)$ (or $S(p)$),
then there are non-group actions on $A$ [Proposition \ref{xxpro2.16}].
AS regular algebras of global dimension three were classified by Artin,
Schelter, Tate and Van den Bergh \cite{AS, ATV1, ATV2}. The algebras
listed before Theorem \ref{xxthm0.4} are only a subset in their classification,
nevertheless, Theorem \ref{xxthm0.4} answers Question \ref{xxque0.1}
in some special cases.  Note that all algebras in Theorem \ref{xxthm0.4}
are not PI.  In the PI case, we have the following result when $H$ is
semisimple and the $H$-action
has trivial homological determinant, which partially answers
Question \ref{xxque0.2}(4).

\begin{theorem}
\label{xxthm0.5}
Assume that $A$ is one of the following algebras. Let $H$ be a
semisimple Hopf algebra acting on $A$ and satisfying Hypothesis
{\rm{\ref{xxhyp2.1}}}. Suppose the $H$-action has trivial
homological determinant. Then $H$ is the dual of a group algebra.
\begin{enumerate}
\item[(1)]
$A(1)$ where $p_{12}^{-2} p_{23}p_{31}$, $p_{31}^{-2} p_{12}p_{23}$ and
$p_{23}^{-2} p_{31}p_{12}$ are not equal to $1$.
\item[(2)]
$A(2)$ where $p\neq \pm 1$.
\item[(3)]
$A(3)$ {\rm{(}}even without ``trivial homological determinant'' 
hypothesis{\rm{)}}.
\item[(4)]
$A(4)$ {\rm{(}}even without ``trivial homological determinant'' hypothesis{\rm{)}}.
\item[(6)]
$A(6)$ where $\beta\neq \pm 1$.
\item[(7)]
$A(7)$ where $p\neq \pm i$.
\end{enumerate}
In cases {\rm{(1)-(4)}}, $H$ is also a commutative group algebra.
\end{theorem}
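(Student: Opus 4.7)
The plan is to combine the CWZ homological identity \cite[Theorem 0.1]{CWZ}, which expresses a compatibility between any Hopf action on an AS regular algebra, its Nakayama automorphism $\mu_A$, and the homological determinant $\hdet$, with the explicit formulas for $\mu_A$ on each $A(i)$ to be recorded in Section~\ref{xxsec1}. Under the trivial-$\hdet$ hypothesis, that identity collapses to the statement that $\mu_A$ intertwines the $H$-action with $S^{-2}$; since $H$ is semisimple in characteristic zero, $S^{2}=\mathrm{id}_{H}$ by Larson--Radford, so $\mu_{A}$ lies in the centralizer inside $\End(A)$ of the image of $H\to\End(A)$.

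The next step is to exploit the explicit shape of $\mu_A$. In each of the cases (1), (2), (3), (4), (6), (7), the Nakayama automorphism is a grading-preserving algebra automorphism whose invariants on $A_{1}$ are controlled by the parameters. The parameter exclusions in the theorem (the conditions on the $p_{ij}$ in (1), $p\neq\pm 1$ in (2), $\beta\neq\pm 1$ in (6), $p\neq\pm i$ in (7)) are arranged precisely so that the eigenstructure of $\mu_A$ on the degree-one piece $A_1$ becomes sufficiently generic: the relevant eigenspaces are intrinsically distinguished lines. Consequently any graded algebra automorphism commuting with $\mu_A$ must preserve those lines and is therefore diagonal in the generating basis. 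Combined with inner faithfulness, the $H$-action factors through the abelian group of diagonal graded automorphisms, so $H$ itself is commutative.

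Finally, any finite-dimensional semisimple commutative Hopf algebra over an algebraically closed field of characteristic zero is isomorphic to $(kG)^{\ast}$ for some finite group $G$ (Artin--Wedderburn applied to the commutative semisimple $H$, together with identification of the group-like elements of $H^{\ast}$), which is the desired conclusion. For the stronger claim in cases (1)--(4) that $H$ is also a commutative group algebra, I would invoke Theorem~\ref{xxthm0.4}, whose hypotheses are weaker than those of the present theorem (trivial $\hdet$ is an extra input here), so that its conclusion persists; since $H$ is then simultaneously a commutative group algebra and the dual of a group algebra, both assertions hold.

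The main obstacle I expect is the rigorous execution of the second paragraph: showing that the centralizer of $\mu_A$ inside $\Aut_{\gr}(A)$ is abelian under the stated parameter conditions, particularly in the down-up case $A(6)$ and the Sklyanin-like case $A(7)$ where $A_1$ is only two-dimensional. There the exclusions $\beta\neq\pm 1$ and $p\neq\pm i$ are precisely what prevents $\mu_{A}|_{A_1}$ from being a scalar or having a repeated eigenvalue that allows a non-diagonal commuting action. A secondary obstacle is cases (3) and (4), where the trivial-$\hdet$ hypothesis is dropped: one must show that the specific form of $\mu_A$ for these algebras together with semisimplicity of $H$ already forces $\hdet$ to be trivial automatically, which should follow from an examination of the explicit relations and a direct computation of the homological determinant formula in terms of $\mu_A$.
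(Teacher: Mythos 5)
Your outline is essentially the paper's argument for the diagonalizable cases (1), (2), (6), (7): the identity of \cite[Theorem 0.1]{CWZ} together with Larson--Radford forces $\eta_{\mu_A^\tau}=\mathrm{id}_K$, the parameter exclusions make the eigenvalues of $\mu_A|_{A_1}$ pairwise distinct, so the coaction matrix is diagonal with grouplike entries and $K=H^\circ$ is a group algebra (equivalently, $H$ is commutative semisimple, hence $(kG)^\ast$). That part is sound. However, there are two genuine gaps. First, cases (3) and (4): for $A(3)$ and $A(4)$ the Nakayama automorphism is \emph{not} diagonalizable on $A_1$ (see \eqref{E1.5.3} and \eqref{E1.5.4}: $t_2\mapsto q^{-1}(t_2-3t_1)$ is a Jordan block), so there are no three ``intrinsically distinguished lines'' and the mechanism ``commuting with $\mu_A$ forces diagonality'' breaks down; when $q^3=1$ even the centralizer of $\mu_A|_{A_1}$ in $\End(A_1)$ is noncommutative. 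Your fallback --- proving that $\hdet$ is automatically trivial for these algebras --- is both false (a cyclic group acting by $a\mapsto \xi^{\deg a}a$ has nontrivial $\hdet$) and not what is needed. The paper's route (Lemmas \ref{xxlem3.3}, \ref{xxlem3.4} and Proposition \ref{xxpro3.5}(2)) uses only the \emph{finite order} of $\eta_{\mu_A^\tau}$ (valid with no $\hdet$ hypothesis, since $\ch k=0$) to kill $y_{21},y_{23},y_{31}$ and force $y_{11}=y_{22}$; then semisimplicity makes $A_1$ a sum of $1$-dimensional simple comodules, inner faithfulness makes $H$ commutative, and a cocommutativity-plus-no-primitive-elements argument (Lemma \ref{xxlem3.1}) kills the remaining off-diagonal entries. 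You would need some version of this to cover (3) and (4).

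Second, your derivation of the last sentence (``in cases (1)--(4), $H$ is also a commutative group algebra'') by invoking Theorem \ref{xxthm0.4} is incorrect: the parameter hypotheses of Theorem \ref{xxthm0.4} (``not a root of unity'') are strictly \emph{stronger} than those of Theorem \ref{xxthm0.5} (``$\neq 1$'' in (1)--(2), and no condition at all in (3)--(4)), not weaker. For instance, $A(2)$ with $p$ a primitive cube root of unity, or $A(3)$ with $q=1$, is covered by Theorem \ref{xxthm0.5} but not by Theorem \ref{xxthm0.4}, so the citation gives nothing in exactly the range where it is needed. The correct argument is the one in Lemma \ref{xxlem2.3}(6) and Proposition \ref{xxpro3.5}: once $\rho(v_i)=v_i\otimes y_{ii}$ with each $y_{ii}$ grouplike, apply $\rho$ to the ($q$-)commutation relations among the generators to conclude $y_{ii}y_{jj}=y_{jj}y_{ii}$, so $K$ is a commutative \emph{and} cocommutative group algebra and $H=K^\circ$ is a commutative group algebra. (Your final reduction --- a finite-dimensional commutative semisimple Hopf algebra over an algebraically closed field of characteristic zero is $(kG)^\ast$ --- is fine and is equivalent to the paper's formulation.)
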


Theorem \ref{xxthm0.5} also holds for the following algebras.
\begin{enumerate}
\item[(0.5.8)]
The class ${\bf D}$ of AS regular algebras of global
dimension five defined in \cite{LWW} with $p^{-6}q^8\neq 1$.
\item[(0.5.9)]
The class ${\bf G}$ of AS regular algebras of global
dimension five defined in \cite{LWW} with $g\neq \pm 1$.
\end{enumerate}

See Corollaries \ref{xxcor2.6} and \ref{xxcor2.7} for other
related results.

The second application of the Nakayama automorphism relies on the
following theorem.

\begin{theorem}
\label{xxthm0.6} Let $A$ be a connected graded domain.
Then its  Nakayama automorphism {\rm{(}}if exists{\rm{)}}
commutes with any algebra automorphism of $A$.
\end{theorem}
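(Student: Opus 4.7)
The strategy is to combine a naturality property of the Nakayama bimodule under algebra automorphisms with the \emph{rigidity} of connected graded domains: since the only units in such an algebra are nonzero scalars, the Nakayama automorphism is in fact unique as an algebra map (rather than just up to an inner automorphism). This reduces the theorem to an essentially formal naturality check.

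First, I would recall that if $A$ admits a Nakayama automorphism $\mu$ then $A$ is skew Calabi--Yau of some dimension $d$, with a canonical $A$-bimodule isomorphism
$$
U \;:=\; \Ext^d_{A^e}(A, A \otimes A) \;\cong\; {}^{1}A^{\mu},
$$
where ${}^{1}A^{\mu}$ denotes $A$ with twisted bimodule action $a\cdot x\cdot b = ax\mu(b)$.

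Next, given an algebra automorphism $\sigma$ of $A$, I would consider the autoequivalence $F_\sigma$ of the category of $A$-bimodules sending $M$ to ${}^{\sigma}M^{\sigma}$, where the new action is $a\star m\star b := \sigma(a)\cdot m\cdot \sigma(b)$. Both the diagonal bimodule $A$ and the enveloping bimodule $A\otimes A$ (with its outer $A^e$-action used to form $\Ext$) are fixed by $F_\sigma$ up to isomorphism, via $\sigma$ and $\sigma\otimes\sigma$ respectively, and these isomorphisms are compatible with the inner $A^e$-structure surviving on $U$. Applying this autoequivalence to the defining isomorphism for $U$ and unpacking what it means for ${}^{1}A^{\mu}$ yields the $A$-bimodule isomorphism ${}^{\sigma}A^{\mu\sigma}\cong {}^{1}A^{\mu}$, which via the substitution $y\mapsto\sigma^{-1}(y)$ rewrites as ${}^{1}A^{\sigma^{-1}\mu\sigma}\cong {}^{1}A^{\mu}$.

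Finally, I would show that when $A$ is a connected graded domain any bimodule isomorphism $\phi\colon {}^{1}A^{\mu'}\to {}^{1}A^{\mu}$ is forced (by setting $x=1$ and $a=b=1$ in the defining relation $\phi(axb_{\mu'})=a\phi(x)b_{\mu}$) to have the form $\phi(y)=yu$ for some $u=\phi(1)\in A$ satisfying $\mu(b)u = u\mu'(b)$ for all $b$, with $u$ necessarily a unit. A standard top-degree argument (leading terms in a graded domain cannot cancel) shows that every unit of $A$ lies in $A_0=k$, so $u\in k^\times$; conjugation by $u$ is then trivial, forcing $\mu=\mu'$. Applied to $\mu'=\sigma^{-1}\mu\sigma$ this produces the desired identity $\sigma\mu=\mu\sigma$. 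The main obstacle is the naturality step, because $A\otimes A$ carries two commuting $A^e$-structures — the \emph{outer} one used to form $\Ext$ and twisted by $F_\sigma$, and the \emph{inner} one that survives to equip $U$ with its bimodule structure — and one must verify that $F_\sigma(U)\cong U$ not merely as vector spaces but as $A$-bimodules. Once that bookkeeping is done, the remaining steps are formal.
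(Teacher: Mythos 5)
Your proposal is correct and follows essentially the same route as the paper: the paper's Lemma \ref{xxlem4.1} is exactly your naturality step (carried out explicitly at the level of Hochschild cochains, giving $\Phi^d(a_1fa_2)=\sigma(a_1)\Phi^d(f)\sigma(a_2)$), and its Theorem \ref{xxthm4.2} then deduces that $g\mu g^{-1}$ is again a Nakayama automorphism and concludes via the observation that a bimodule generator of $A^{\mu}$ is unique up to a unit, which in a connected graded domain is a central scalar. Your bimodule-autoequivalence phrasing and the final ``units are scalars, so $\mu'=\mu$'' step match the paper's argument in substance.
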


A slightly weaker version of Theorem \ref{xxthm0.6} was proved
in \cite[Theorem 3.11]{RRZ1}, which states that $\mu_A$
commutes with graded algebra automorphisms of $A$.
As a consequence of Theorem \ref{xxthm0.6} we have the following,
which partially answers the second part of Question \ref{xxque0.2}(5).

\begin{corollary}
\label{xxcor0.7} If $A$ is one of the following algebras, then
every algebra automorphism of $A$ preserves the ${\mathbb N}$-grading
of $A$.
\begin{enumerate}
\item[(1)]
$A(1)$ where $p_{ij}$ are generic or $(p_{12},p_{13},p_{23})=(1,1,p)$ with
$p$ not a root of unity.
\item[(2-4)]
$A(2)$-$A(4)$ with the same hypotheses as in Theorem {\rm{\ref{xxthm0.4}(2-4)}}.
\item[(6)]
$A(6)$ where $\beta$ is not a root of unity.
\item[(7)]
$A(7)$ where $p$ is not a root of unity.
\end{enumerate}
\end{corollary}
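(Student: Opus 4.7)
The plan is to combine Theorem~\ref{xxthm0.6} with the explicit descriptions of $\mu_A$ supplied in Section~\ref{xxsec1}. By Theorem~\ref{xxthm0.6} any algebra automorphism $\phi$ of $A$ satisfies $\phi\mu_A=\mu_A\phi$, and the goal is to show that this centralizer condition, together with the non-root-of-unity hypotheses, forces $\phi(A_1)\subseteq A_1$. Since $A$ is generated in degree one, that is equivalent to $\phi$ being graded.

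I would fix a PBW monomial basis of $A$ and expand $\phi(t_i)=\sum_{w} c_{i,w}\,w$ for each degree-one generator $t_i$. Writing the Jordan decomposition $\mu_A=\mu_A^s\mu_A^u$ (each factor a polynomial in $\mu_A$), the identity $\phi\mu_A=\mu_A\phi$ splits into $\phi\mu_A^s=\mu_A^s\phi$ and $\phi\mu_A^u=\mu_A^u\phi$. The semisimple half immediately gives that each $\phi(t_i)$ lies in a single $\mu_A^s$-isotypic component, so $c_{i,w}=0$ unless the multiplicative weight $\chi(w)$ coincides with $\chi(t_i)$. For the parameter choices in (1)--(4), (6), (7), the hypothesis that some combination of parameters is not a root of unity is designed exactly to enforce multiplicative independence of the weights $\chi(t_1),\ldots,\chi(t_n)$, so in the good cases the only solutions $w$ to $\chi(w)=\chi(t_i)$ already lie in $A_1$, whence $\phi$ is graded.

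The main obstacle I expect is that the semisimple constraint alone is not always enough. In the degenerate sub-case $(p_{12},p_{13},p_{23})=(1,1,p)$ of $A(1)$, for example, the monomials $t_1^{a}(t_2t_3)^{b}$ all share $\chi=1$ with $t_1$; and for $A(3)$ and $A(4)$ the presence of Jordan blocks in $\mu_A$ introduces further unwanted weight matches. To close the argument in the non-semisimple cases I would use the companion identity $\phi\mu_A^u=\mu_A^u\phi$, which on the isotypic component should cut the image down to an explicit linear combination of $t_1,t_2,t_3$ by a direct calculation using the shape of $\mu_A^u$ from Section~\ref{xxsec1}. In the degenerate $A(1)$ sub-case, and similarly in the $A(6)$ sub-case where $\mu_A$ acts as a scalar on $A_1$, I would instead invoke the structural constraint that $\phi(t_1),\phi(t_2),\phi(t_3)$ must still generate a copy of $A$: a degree-count on the defining relations, combined with the non-root-of-unity hypothesis applied to the low-degree normal elements, eliminates the remaining non-linear contributions. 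Each individual verification is algebra-by-algebra, and this case-by-case bookkeeping is where the real work of the proof lies.
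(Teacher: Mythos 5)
Your overall strategy---invoke Theorem \ref{xxthm0.6} and then exploit the explicit form of $\mu_A$ on a PBW basis---is the same as the paper's (Lemma \ref{xxlem4.3} together with Propositions \ref{xxpro4.4}, \ref{xxpro4.5}, \ref{xxpro4.6} and \ref{xxpro4.11}), but your central step does not work as stated. You claim that in the ``good'' cases the weight constraint $\chi(w)=\chi(t_i)$ already forces $w\in A_1$. This is false for every algebra on the list, because the eigenvalues of $\mu_A$ on the degree-one generators always multiply to $1$: for generic $A(1)$ one has $\lambda_1\lambda_2\lambda_3=(p_{21}p_{31})(p_{12}p_{32})(p_{13}p_{23})=1$, so the monomials $t_1^{a+1}t_2^{a}t_3^{a}$ have the same weight as $t_1$ for all $a\geq 0$; for $A(6)$ the monomials $x(yx)^a$ all share the weight $-\beta$ of $x$; and similarly for $A(2)$ and $A(7)$. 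Hence the semisimple part of the centralizer condition can never by itself place $\phi(t_i)$ in $A_1$. What the paper's Lemma \ref{xxlem4.3} actually extracts from the hypothesis that $\lambda_1\neq\prod_{j>1}\lambda_j^{b_j}$ for all $b_j\geq 0$ is only that every monomial occurring in $\phi(x_1)$ involves $x_1$, i.e.\ $\phi(x_1)=x_1h$; the decisive further step is to run the same argument for $\phi^{-1}$ and then use $A^\times=k^\times$ to force $h$ to be a scalar. This invertibility-plus-trivial-units argument is entirely absent from your proposal, and it is exactly what excludes, say, $\phi(t_1)=t_1+t_1^2t_2t_3$ in the generic $A(1)$ case.

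Two further points. First, the non-diagonalizable cases $A(3)$ and $A(4)$ require much more than ``a direct calculation using the shape of $\mu_A^u$'': the paper needs Lemma \ref{xxlem4.10} (every $\mu_A$-eigenvector in the Jordan-plane subalgebra generated by $t_1,t_2$ is a polynomial in $t_1$), the decomposition $\Aut(A_{\geq 1})=\Aut_{gr}(A)\ltimes\Aut_{uni}(A)$ of Lemma \ref{xxlem4.7}, and the full analysis of Proposition \ref{xxpro4.11} showing that unipotent automorphisms are trivial; similarly, $t_1$ in $A(2)$ is not pinned down by weights (its weight $1$ is shared by all of $k[t_1,t_2t_3]$) but by applying $\phi$ to the relation $t_1^2=t_3t_2-pt_2t_3$ after $t_2,t_3$ are handled. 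Second, your proposed repair of the degenerate sub-case $(p_{12},p_{13},p_{23})=(1,1,p)$ of $A(1)$ cannot succeed: there $t_1$ is central, and the translation $t_1\mapsto t_1+1$, $t_2\mapsto t_2$, $t_3\mapsto t_3$ preserves all three defining relations and is a non-graded automorphism, so no degree count or weight argument will eliminate it. (Indeed, Proposition \ref{xxpro4.4} in the paper treats only generic $p_{ij}$, and this sub-case is not covered by any argument in Section \ref{xxsec4}.)
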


Using Corollary \ref{xxcor0.7}, the automorphism group
of the algebras are calculated explicitly, see Section \ref{xxsec4}.
Note that there are non-graded algebra automorphisms for algebra $A(5)$,
see Theorem \ref{xxthm5.8}. 
A much more interesting and difficult question
is the first part of Question \ref{xxque0.2}(5). For example, we ask
whether every finite dimensional Hopf action on the algebras in
Corollary \ref{xxcor0.7} preserve the ${\mathbb N}$-grading.

In the papers \cite{CPWZ1, CPWZ2, BeZ}
the authors use the discriminant to control algebra automorphisms and locally
nilpotent derivations. The third application of the Nakayama automorphism
concerns with locally nilpotent derivations and the cancellation problem.

\begin{theorem}
\label{xxthm0.8} Let $A$ be a connected graded domain.
Then its  Nakayama automorphism {\rm{(}}if exists{\rm{)}}
commutes with  any locally nilpotent derivation of $A$.
\end{theorem}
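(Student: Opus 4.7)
The plan is to reduce Theorem \ref{xxthm0.8} to Theorem \ref{xxthm0.6} by exponentiating the locally nilpotent derivation $\delta$ to a one-parameter family of (generally non-graded) algebra automorphisms of $A$, and then reading off the coefficient of $c$ to recover $\delta$. This is the usual dictionary between derivations and automorphisms in characteristic zero, and it works here precisely because Theorem \ref{xxthm0.6} is not restricted to graded automorphisms: the exponentials of $\delta$ almost never preserve the grading, so the weaker graded statement \cite[Theorem 3.11]{RRZ1} would not suffice.

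For each $c\in k$, define
\[
\phi_c \;:=\; \exp(c\delta)\;=\;\sum_{n\ge 0}\frac{c^n}{n!}\,\delta^n.
\]
Since $\delta$ is locally nilpotent, for every $a\in A$ there is some $N=N(a)$ with $\delta^{N}(a)=0$, so $\phi_c(a)$ is in fact a finite sum and $\phi_c\colon A\to A$ is a well-defined $k$-linear map. The standard Leibniz/binomial calculation (valid because $\ch k = 0$) shows $\phi_c(ab)=\phi_c(a)\phi_c(b)$, so $\phi_c$ is an algebra endomorphism, and $\phi_{-c}$ is a two-sided inverse; hence $\phi_c\in\Aut(A)$.

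Theorem \ref{xxthm0.6} then gives $\mu_A\circ\phi_c=\phi_c\circ\mu_A$ for every $c\in k$. To extract the infinitesimal statement, fix $a\in A$ and choose $N$ large enough that $\delta^{N}(a)=\delta^{N}(\mu_A(a))=0$; this is possible since $\delta$ is locally nilpotent. Then both $c\mapsto\mu_A(\phi_c(a))$ and $c\mapsto\phi_c(\mu_A(a))$ are polynomials in $c$ of degree less than $N$ with coefficients in $A$, and their equality for all $c\in k$ forces the coefficients to agree termwise. Comparing the coefficients of $c^{1}$ yields $\mu_A(\delta(a))=\delta(\mu_A(a))$, and since $a\in A$ was arbitrary, $\mu_A\circ\delta=\delta\circ\mu_A$, as required.

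No step is a serious obstacle: the only technical points are to check that $\phi_c$ is a genuine algebra automorphism of $A$ (rather than a formal object over $A[[c]]$) and that polynomial coefficients may legitimately be compared, and both follow immediately from local nilpotence of $\delta$ together with $\ch k=0$. In this sense Theorem \ref{xxthm0.8} is essentially a short corollary of Theorem \ref{xxthm0.6}.
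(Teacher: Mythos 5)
Your proof is correct and follows essentially the same strategy as the paper's (Theorem \ref{xxthm6.5}): exponentiate the locally nilpotent derivation to an algebra automorphism, invoke Theorem \ref{xxthm0.6}, and compare coefficients to recover the infinitesimal statement. The only cosmetic difference is that the paper packages the exponential as a single automorphism $G_{\partial,t}$ of $A[t]$ (using $\mu_{A[t]}=\mu_A\otimes \mathrm{Id}_{k[t]}$ and comparing coefficients of $t^i$, which also yields the statement for higher derivations needed later), whereas you use the one-parameter family $\exp(c\delta)\in\Aut(A)$ for $c\in k$ and compare polynomial coefficients in $c$, which works since $k$ is infinite.
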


Using the similar ideas to those presented in \cite{BeZ}, we can solve
Zariski Cancellation Problem for some noncommutative
algebras. Recall that an algebra $A$ is called {\it cancellative}
if $A[t]\cong B[t]$ for any algebra $B$ implies that $A\cong B$.
The original Zariski Cancellation Problem asks if the commutative
polynomial ring $k[t_1,\cdots,t_n]$ is cancellative. Recall that
$k[t_1]$ is cancellative by a result of Abhyankar-Eakin-Heinzer
\cite{AEH}, $k[t_1,t_2]$ is cancellative by Fujita \cite{Fu} and 
Miyanishi-Sugie \cite{MS} in characteristic zero and by Russell
\cite{Ru} in positive characteristic. The Zariski Cancellation Problem
was open for many years. In 2013, a remarkable development was made by
Gupta \cite{Gu1,Gu2} who completely settled this problem negatively
in positive characteristic for $n\geq 3$. The Zariski Cancellation Problem
in characteristic zero remains open for $n\geq 3$. Note that the
Zariski Cancellation Problem is also related to Jacobian Conjecture.
See \cite{BeZ} for some background about cancellation problems.
Here we solve the Zariski Cancellation Problem (ZCP) for some
noncommutative algebras of dimension three.

\begin{corollary}
\label{xxcor0.9} Let $A$ be any algebra  in Corollary
{\rm{\ref{xxcor0.7}}}. Then $A$ is  cancellative.
\end{corollary}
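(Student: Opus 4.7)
The plan is to combine Theorem \ref{xxthm0.8} and Corollary \ref{xxcor0.7} with the Makar-Limanov/discriminant framework of Bell-Zhang \cite{BeZ}. The strategy rests on the following intermediate claim:

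\emph{Claim.} For every algebra $A$ listed in Corollary \ref{xxcor0.7}, the only locally nilpotent derivation of $A$ is zero, i.e., $\LND(A) = \{0\}$; equivalently, the Makar-Limanov invariant is $\ML(A) = A$.

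Granting the claim, cancellation follows from the Bell-Zhang template. Given an isomorphism $\phi: A[t] \xrightarrow{\sim} B[t]$, place $t$ in degree one so that $A[t]$ is connected graded, with Nakayama automorphism $\mu_A \otimes \mathrm{id}$ by the tensor-product formula for Nakayama automorphisms. Transfer the standard derivation $\partial_t$ of $B[t]$ across $\phi$ to obtain an LND of $A[t]$; combine the rigidity $\LND(A) = \{0\}$, the eigenvalue structure of $\mu_{A[t]}$ (via Theorem \ref{xxthm0.8}), and the graded-automorphism property of Corollary \ref{xxcor0.7}, in order to control how $\phi$ mixes $A$ with $t$ and thereby recover $B \cong A$.

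The main obstacle is the proof of the claim, which must be handled case by case using the explicit Nakayama automorphisms from Section \ref{xxsec1}. The common mechanism is the following. Let $\delta \in \LND(A)$; by Theorem \ref{xxthm0.8}, $\delta$ commutes with $\mu_A$, so $\delta$ preserves each generalized eigenspace of $\mu_A$. Under the parameter hypotheses of Corollary \ref{xxcor0.7}, $\mu_A$ has an eigenvalue $\lambda$ on some degree-one generator that is not a root of unity. The corresponding $\mu_A$-eigenspace is one-dimensional in degree one and, in higher degrees, is thinly populated by monomials whose weights match; local nilpotence of $\delta$ on such an eigenspace — where $\delta$ would otherwise act with a single nonzero scalar eigenvalue — forces $\delta$ to kill the corresponding generator. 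Iterating this eigenvalue analysis across all generators yields $\delta|_{A_1} = 0$, and the Leibniz rule then propagates $\delta \equiv 0$ over all of $A$. The argument is transparent for the essentially diagonal cases $A(1)$-$A(4)$; for the cubic-relation algebras $A(6)$ and $A(7)$, which are generated in degree one but have relations in degree three, the same principle applies after a slightly more careful linear-algebra analysis of $\mu_A$ on $A_1$.
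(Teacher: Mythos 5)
Your central Claim---that $\LND(A)=\{0\}$ for every algebra in Corollary \ref{xxcor0.7}---is false for $A(3)$ and $A(4)$, so the proposal breaks precisely where you declare the argument ``transparent''. For $A(3)$, define $\delta$ on generators by $\delta(t_1)=\delta(t_3)=0$ and $\delta(t_2)=t_1$. This respects the relations \eqref{E0.3.3}: it kills $(t_2+t_1)t_1-t_1t_2$ and $t_3t_1-qt_1t_3$ outright, and it sends $t_3t_2-q(t_2+t_1)t_3$ to $t_3t_1-qt_1t_3$, which is again a relation. Hence $\delta$ is a nonzero locally nilpotent derivation of $A(3)$ (it strictly decreases the number of $t_2$'s in each monomial), and the same formula works for $A(4)$. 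Note that $\delta$ commutes with $\mu_{A(3)}$ and preserves its generalized eigenspaces, so this is perfectly consistent with Theorem \ref{xxthm0.8}; the point is that $\mu_{A(3)}$ and $\mu_{A(4)}$ are exactly the \emph{non}-diagonalizable Nakayama automorphisms in this paper (see \eqref{E1.5.3}, \eqref{E1.5.4}), there is no generating set of $\mu_A$-eigenvectors, and the eigenvalue-separation mechanism simply does not force $\delta=0$. The paper handles $A(3)$ and $A(4)$ by an entirely different and much shorter route: their centers are trivial (Proposition \ref{xxpro4.11}(3,5)), and an algebra with trivial center is cancellative by \cite[Proposition 1.3]{BeZ}. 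No LND-rigidity is used, or available, for these cases.

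Even for the diagonalizable cases $A(1)$, $A(2)$, $A(6)$, $A(7)$, the rigidity you need is stronger than $\LND(A)=\{0\}$ in two respects. First, an isomorphism $\phi\colon A[t]\to B[t]$ produces locally nilpotent (higher) derivations of $A[t]$, not of $A$, and ordinary rigidity of $A$ says nothing about these---$\partial/\partial t$ is already a nonzero LND of $A[t]$. The criterion actually invoked, \cite[Theorem 3.6]{BeZ}, consumes $\ML^H(A)=A$, and what the paper proves (Corollary \ref{xxcor6.8}) is the strong form $\ML^H(A[y_1,\dots,y_n])=A$. Second, the relevant objects are \emph{higher} (Hasse--Schmidt) derivations, which need not be iterative and hence are not exponentials of ordinary LNDs even in characteristic zero; this is why Section \ref{xxsec5} routes everything through the automorphism $G_{\partial,t}$ of $A[t]$ and the automorphism-eigenvalue Lemma \ref{xxlem4.3} (via Theorem \ref{xxthm4.2}), rather than through a direct eigenspace analysis of a single derivation. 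Your sketch also stops short even on its own terms: commutation with $\mu_A$ only places $\delta(x_1)$ in the $\lambda_1$-eigenspace, i.e.\ $\delta(x_1)\in x_1A$, and one still needs the invertibility of $G_{\partial,t}$ (or of $\exp(s\delta)$) to conclude that the cofactor is a unit and then equals $1$, as in Lemma \ref{xxlem6.6}.
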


The paper is organized as follows. In Section \ref{xxsec1},
we recall the definition of
the Nakayama automorphism and AS regular algebras and compute the Nakayama
automorphism of the algebras in Theorem \ref{xxthm0.4}. The proofs of
Theorem \ref{xxthm0.4} and Theorem \ref{xxthm0.5} are given in Sections \ref{xxsec2} and
\ref{xxsec3}. In Section \ref{xxsec4}, we prove Theorem \ref{xxthm0.6}
and Corollary \ref{xxcor0.7}. In Section 5, we calculate the  
full automorphism group of $A(5)$. In Section \ref{xxsec5}, we prove 
Theorem \ref{xxthm0.8} and Corollary \ref{xxcor0.9}.

\section{Nakayama automorphism}
\label{xxsec1}
Throughout let $A$ be an algebra over $k$. Let $A^e$ denote the
enveloping algebra $A\otimes A^{op}$, where $A^{op}$ is the opposite ring
of $A$. An $A$-bimodule can be identified with a left $A^e$-module.

\begin{definition}\cite[Definition 0.1]{RRZ1}
\label{xxdef1.1} Let $A$ be an algebra over $k$.
\begin{enumerate}
\item[(a)]
$A$ is called {\it skew Calabi-Yau} (or {\it skew CY}, for short) if
\begin{enumerate}
\item[(i)]
$A$ is homologically smooth, that is, $A$ has a projective resolution
in the category $A^e$-$\Mod$ that has finite length and such that each
term in the projective resolution is finitely generated, and
\item[(ii)]
there is an integer $d$ and an algebra automorphism $\mu$ of $A$ such that
\begin{equation}
\label{E1.1.1}\tag{E1.1.1}
\Ext^i_{A^e}(A,A^e)\cong \begin{cases} 0 & i\neq d\\
{^1 A^\mu} & i=d,
\end{cases}
\end{equation}
as $A$-bimodules, where $1$ denotes the identity map of $A$.
\end{enumerate}
\item[(b)]
If \eqref{E1.1.1} holds for some algebra automorphism $\mu$ of
$A$ (even if $A$ is not skew CY), then $\mu$ is called the
{\it Nakayama automorphism} of $A$, and is usually denoted by $\mu_A$.
\item[(c)]
\cite[Definition 3.2.3]{Gi}
We call $A$ {\it Calabi-Yau} (or {\it CY}, for short) if $A$ is skew
Calabi-Yau and $\mu_A$ is inner.
\end{enumerate}
\end{definition}

\begin{definition}
\label{xxdef1.2}
A connected graded algebra $A$ is called
{\it Artin-Schelter Gorenstein} (or {\it AS Gorenstein},
for short) if the following conditions hold:
\begin{enumerate}
\item[(a)]
$A$ has finite injective dimension $d<\infty$ on both sides,
\item[(b)]
$\Ext^i_A(k,A) = \Ext^i_{A^{op}} (k,A) = 0$ for all
$i \neq d$ where $k = A/A_{\geq 1}$, and
\item[(c)]
$\Ext^d_A(k,A) \cong k(l)$ and $\Ext^d_{A^{op}} (k,A)
\cong k(l)$ for some integer $l$. The integer
$l$ is called the AS index.
\end{enumerate}
If moreover
\begin{enumerate}
\item[(d)] $A$ has (graded) finite global dimension $d$,
then $A$ is called {\it Artin-Schelter regular} (or
{\it AS regular}, for short).
\end{enumerate}
\end{definition}

By \cite[Lemma 1.2]{RRZ1}, if $A$ is connected graded, then $A$ is AS
regular if and only if $A$ is skew CY.

We will use the following two lemmas several times.

\begin{lemma}
\label{xxlem1.3}\cite[Lemma 1.5]{RRZ1}
Let $A$ be a noetherian connected graded AS Gorenstein algebra and
let $z$ be a homogeneous normal regular element of positive degree
such that $\mu_A(z)=cz$ for some $c\in k^\times$. Let $\tau$  be
in $\Aut(A)$ such that $za = \tau(a)z$ for all $a\in A$.
Then $\mu_{A/(z)}$ is equal to $\mu_A \circ \tau$ when
restricted to $A/(z)$.
\end{lemma}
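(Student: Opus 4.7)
The plan is to compute the Nakayama bimodule of $B := A/(z)$ directly from the short exact sequence of $A$-bimodules
\[
0 \to {}^{\tau}A(-d) \xrightarrow{\;z\,\cdot\;} A \to B \to 0,
\]
where $d = \deg z$ and the first map sends $a \mapsto za$. This is an honest bimodule morphism precisely because $za = \tau(a)z$, which is what forces the left action on the source to be twisted by $\tau$. The hypothesis $\mu_A(z) = cz$ enters at the very start by guaranteeing that $(z)$ is stable under $\mu_A$, so that $\mu_A$ descends to an automorphism of $B$ and the claimed identity $\mu_B = (\mu_A \circ \tau)|_B$ is even well-formulated.

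The next step is to apply $\Ext^\ast_{A^e}(-, A^e)$ to this sequence. By the AS Gorenstein (equivalently, skew Calabi-Yau) hypothesis on $A$, one has $\Ext^i_{A^e}(A, A^e) = 0$ for $i \neq d_A$ and $\Ext^{d_A}_{A^e}(A, A^e) \cong {}^1A^{\mu_A}(l)$. Using the standard fact that twisting the source of $\Ext(-, A^e)$ by a one-sided automorphism produces the opposite twist on the output bimodule, one identifies $\Ext^{d_A}_{A^e}({}^{\tau}A(-d), A^e)$ as a shifted copy of ${}^1A^{\mu_A}$ with a further $\tau$-twist. Feeding both into the long exact sequence of $\Ext$, the only nonzero value is $\Ext^{d_A-1}_{A^e}(B, A^e)$, computed as the cokernel of the map induced by $z$; since it is annihilated by $z$ on both sides, it is naturally a $B$-bimodule, and the explicit form is ${}^1B^{\mu_A\circ\tau}$ up to a grading shift.

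Finally I would invoke the change-of-rings identification
\[
\Ext^{i}_{A^e}(B, A^e) \cong \Ext^{i-1}_{B^e}(B, B^e)
\]
as $B$-bimodules (up to a grading shift), which is the bimodule analogue of the classical fact that killing a normal regular element drops the injective dimension by one. Combined with the previous paragraph, this yields $\Ext^{d_A-1}_{B^e}(B, B^e) \cong {}^1B^{\mu_A\circ\tau}$ up to shift, while by definition this $B^e$-Ext also equals ${}^1B^{\mu_B}$ up to shift. Comparing the two descriptions forces $\mu_B = (\mu_A \circ \tau)|_B$, as required.

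The main technical obstacle is bookkeeping: the functor $\Ext(-, A^e)$ interacts with one-sided twists asymmetrically, the change-of-rings isomorphism carries its own implicit twist that depends on $c$, $\tau$, and the degree of $z$, and one must verify that all these contributions consolidate precisely to $\mu_A \circ \tau$ rather than to some conjugate or inverse variant. The assumption $\mu_A(z) = cz$ is exactly what is needed for these twists to combine cleanly without introducing an uncontrolled correction from the failure of $\mu_A$ to preserve $(z)$.
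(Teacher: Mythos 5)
The paper does not actually prove this lemma: it is quoted verbatim from \cite[Lemma 1.5]{RRZ1}, so there is no in-paper argument to compare yours against. Your strategy --- resolve $B=A/(z)$ by the bimodule short exact sequence coming from multiplication by $z$, apply $\Ext^{\ast}_{A^e}(-,A^e)$, and transfer to $B^e$ by a Rees-type change of rings --- is the natural one and is essentially how results of this kind are established, so the architecture is sound.

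That said, the degree and twist bookkeeping, which you yourself identify as the crux, is wrong as written in three concrete places. First, from $za=\tau(a)z$ one gets $az=z\tau^{-1}(a)$, so the source of the map $a\mapsto za$ carries the left twist by $\tau^{-1}$ (equivalently is ${}^{1}A^{\tau}(-d)$ after the standard re-indexing), not by $\tau$; this is exactly the kind of inverse that can silently turn the answer into $\mu_A\circ\tau^{-1}$. Second, in the long exact sequence the cokernel of the map induced by $z$ sits in $\Ext^{d_A+1}_{A^e}(B,A^e)$, not $\Ext^{d_A-1}_{A^e}(B,A^e)$: the sequence reads $\Ext^{d_A}_{A^e}(A,A^e)\to\Ext^{d_A}_{A^e}({}^{\tau^{-1}}A(-d),A^e)\to\Ext^{d_A+1}_{A^e}(B,A^e)\to 0$, with the kernel term vanishing by regularity of $z$. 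Third, the change-of-rings shift from $A^e$ to $B^e$ is $2$, not $1$, because $B^e=A^e/(z\otimes 1,\,1\otimes z)$ is the quotient by a normal regular sequence of length two (one copy of $z$ on each side); the injective dimension of $B$ itself drops by one, but the $\Ext$-degree over the enveloping algebras drops by two. Note that your stated shift of $1$ is inconsistent even with your own (incorrect) degree $d_A-1$, since $\mu_B$ is read off from $\Ext^{d_A-1}_{B^e}(B,B^e)$, whereas the corrected indices $\Ext^{d_A+1}_{A^e}(B,A^e)\cong\Ext^{d_A-1}_{B^e}(B,B^e)$ do line up. All of this is repairable, and the hypothesis $\mu_A(z)=cz$ enters where you say it does, but a finished proof would have to carry out this accounting correctly, including the conjugation twists introduced by each application of the Rees lemma for a normal (non-central) element.
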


Given a nonzero scalar $c\in k^\times$, we define a graded algebra 
automorphism $\xi_c$ of $A$  by
$$\xi_c(a)=c^{\deg a} a$$
for all homogeneous elements $a\in A$.

\begin{lemma}
\label{xxlem1.4}\cite[Theorem 0.3]{RRZ1}
Let $A$ be a noetherian connected graded AS Gorenstein algebra
with AS index $l$. Let $A^\sigma$ be the graded twist of $A$
associated to a graded automorphism $\sigma$ of $A$.
Then $\mu_{A^{\sigma}}=\mu_A \circ \sigma^l \circ \xi_{\hdet (\sigma)}^{-1}$.
\end{lemma}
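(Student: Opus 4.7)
The approach is to compute the Nakayama bimodule of $A^\sigma$ by transporting the Nakayama bimodule of $A$ along the graded-twist equivalence, and then to read off the resulting automorphism.

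First I would recall the key structural facts about graded twists. The classical graded twist construction provides an equivalence of graded module categories between $A$ and $A^\sigma$ which is the identity on underlying graded vector spaces and twists the right action by a power of $\sigma$ determined by the internal degree. This equivalence preserves projectives, the trivial module $k$, minimal resolutions and Ext groups, so in particular $A^\sigma$ is AS Gorenstein of the same injective dimension $d$ and the same AS index $l$. For bimodules I would use the analogous equivalence for $A^e$-modules: $(A^\sigma)^e$ is itself realizable as a graded twist of $A^e$ by an automorphism built from $\sigma$ and $\sigma^{-1}$, which lets me compare the bimodules $\Ext^d_{A^e}(A,A^e)\cong {}^1 A^{\mu_A}$ and $\Ext^d_{(A^\sigma)^e}(A^\sigma,(A^\sigma)^e)\cong {}^1(A^\sigma)^{\mu_{A^\sigma}}$ on the same underlying graded vector space.

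Next I would pin down the right $A^\sigma$-action on the transported bimodule, which is where the Nakayama automorphism is recorded. The Zhang twist modifies the right action in two essentially independent ways. First, the grading shift by $l$ in the distinguished generator of the top Ext group $\Ext^d_A(k,A)\cong k(l)$ contributes $l$ factors of $\sigma$ in the twisted action once the shift is unwound, producing the composite $\sigma^l$. Second, the automorphism $\sigma$ acts on the one-dimensional top Ext group by the scalar $\hdet(\sigma)$ by the very definition of the homological determinant, so renormalizing the distinguished generator introduces a compensating factor of $\xi_{\hdet(\sigma)}^{-1}$. Composing these contributions with the original right-action twist by $\mu_A$ yields $\mu_{A^\sigma}=\mu_A\circ\sigma^l\circ\xi_{\hdet(\sigma)}^{-1}$, as claimed.

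The main obstacle will be the careful bookkeeping of left versus right actions through the bimodule twist: one must confirm that the shift and the hdet contributions appear on the right action, which encodes the Nakayama automorphism, rather than on the left, and that they compose in the stated order rather than in reverse. A clean way to organize this is to fix a minimal free bimodule resolution of $A$, apply the enveloping-algebra Zhang twist degree by degree, and track the generator of the top term explicitly under the induced isomorphism of Ext bimodules.
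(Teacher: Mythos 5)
The paper does not actually prove this lemma: it is quoted from \cite[Theorem~0.3]{RRZ1} (established there as part of Theorem~5.4 via $\mathbb{Z}^w$-graded twists and local cohomology), so there is no in-paper argument to compare against. Your outline follows the same strategy as that source: realize $(A^\sigma)^e$ as a graded twist of $A^e$, transport the bimodule $\Ext^d_{A^e}(A,A^e)\cong {}^1A^{\mu_A}$ across the equivalence, and read off the new right action. That is the correct plan.

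The gap is that the two steps which actually produce the formula are asserted rather than derived. First, the Zhang-twist equivalence is a statement about one-sided graded modules; to apply it to $\Ext_{A^e}(A,A^e)$ you must verify that $A^\sigma\otimes(A^\sigma)^{op}$ is a $\mathbb{Z}^2$-graded twist of $A\otimes A^{op}$ by a twisting system built from $\sigma$ and $(\sigma^{-1})^{op}$, that $A$ corresponds to $A^\sigma$ under this twist, and that the resulting identification of Ext groups is compatible with the residual (inner) bimodule structure that records $\mu$. This is precisely where the order of composition in the displayed formula is decided, and you defer it to ``careful bookkeeping.'' Second, your account of the $\hdet$ factor is not yet an argument: whether $\sigma$ acts on the relevant one-dimensional space by $\hdet(\sigma)$ or by $\hdet(\sigma)^{-1}$ depends on whether one works with $\Ext^d_A(k,A)$ or with the top degree of the local cohomology $H^d_{\mathfrak{m}}(A)$ (the standard definition uses the latter and inserts an inverse), and ``renormalizing the distinguished generator'' does not by itself explain why the correction enters as $\xi_{\hdet(\sigma)}^{-1}$ rather than $\xi_{\hdet(\sigma)}$, nor why it attaches to the right action (which encodes $\mu$) rather than the left. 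Since the entire content of the lemma is exactly which power of $\sigma$, which power of $\hdet(\sigma)$, and in which order they compose, a complete proof must carry out this bookkeeping explicitly --- for instance via the local cohomology description of $\Ext^d_{A^e}(A,A^e)$ as in \cite[Section~5]{RRZ1}. As written, your sketch identifies the correct ingredients and the correct route but does not establish the identity.
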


Lemma \ref{xxlem1.4} holds in the multi-graded case, see
\cite[Theorem 5.4(a)]{RRZ1}. We now compute the Nakayama automorphism
of some classes of AS regular algebras of dimension three (or higher).

First we consider the skew polynomial ring
$A=k_{p_{ij}}[t_1,\cdots.t_n]$ which is generated by
$t_1,\cdots,t_n$ and subject to the relations
$$t_j t_i=p_{ij} t_i t_j$$
for all $i<j$, where $p_{ij}\in k^\times$ are nonzero scalars satisfying
$p_{ii}=1$ for all $i$ and $p_{ji}=p_{ij}^{-1}$ for all $i,j$.
The Nakayama automorphism of $A$ is calculated in \cite[Proposition 4.1]{LWW}
and \cite[Example 5.5]{RRZ1}:
$$\mu_A: t_i\mapsto 
(\prod_{s=1}^{n} p_{si}) t_i, \quad {\text{for all }}\; i.$$
If $n$ is three, then the Nakayama
automorphism of $k_{p_{ij}}[t_1,t_2,t_3]$ is determined by
\begin{equation}
\label{E1.5.1}\tag{E1.5.1}
\mu_A: t_1\to p_{21}p_{31} \;t_1,\quad
t_2\to p_{12}p_{32}\;t_2,\quad
t_3\to p_{13}p_{23} \;t_3.
\end{equation}

The next algebra is
$$A=k\langle t_1,t_2,\cdots, t_n\rangle/
(t_nt_{n-1}-pt_{n-1}t_n-\sum_{i=1}^s t_i^2,
t_i \; {\text{is central for all}}\; i\leq n-2)$$
where $p\in k^\times $ and $s$ is a positive integer no more than
$n-2$. Let $\Omega=
\sum_{i=1}^s t_i^2$. Then $\Omega$ is central and $\mu_A(\Omega)=\Omega$.
By Lemma \ref{xxlem1.3},
$$\mu_A=\mu_{A/(\Omega)}=\mu_{B/(\Omega)}=\mu_{B}$$
where
$$B=k\langle t_1,t_2,\cdots, t_n\rangle/
(t_nt_{n-1}-pt_{n-1}t_n,
t_i \; {\text{is central for all}}\; i\leq n-2).$$
Hence
\begin{equation}
\label{E1.5.2}\tag{E1.5.2}
\mu_A: t_i\to t_i, \; \forall \; i\leq n-2, \quad{\text{and}}\quad
t_{n-1}\to p^{-1} t_{n-1}, \; t_n\to p t_n
\end{equation}
If $n=3$ and $s=1$, then
$$\mu_A: t_1\to t_1, \quad t_2\to p^{-1} t_2, \quad t_3\to p t_3.$$

Consider the commutative polynomial
ring $B=k[t_1,t_2,t_3]$ as a ${\mathbb Z}$-graded algebra with
$\deg t_i=1$ for all $i=1,2,3$. Let $\sigma$ be a graded algebra
automorphism of $B$ determined by $t_1\to t_1, t_2\to t_2-t_1, t_3\to qt_3$
where $q$ is a nonzero scalar. We use the convention in \cite[Section 5]{RRZ1}
to deal with the graded twist, so we can use the identity proved
in \cite[Theorem 5.4(a)]{RRZ1}. The new multiplication of the graded
twist, denoted by $B^{\sigma}$, associated to $\sigma$ is defined by
\cite[(E5.0.2)]{RRZ1}. Starting from a commutative relation $t_j t_i=t_i
t_j$, we have a relation $\sigma^{-1}(t_j) t_i=\sigma^{-1}(t_i) t_j$ in
the graded twist $B^{\sigma}$. Hence the relations for the algebra $B^{\sigma}$
are
$$(t_2+t_1) t_1=t_1 t_2, \; q^{-1}t_3 t_1=t_1 t_3, 
\; q^{-1} t_3 t_2=(t_2+t_1) t_3$$
which show that $B^{\sigma}$ is isomorphic to the $A(3)$ given before Theorem
\ref{xxthm0.4}. Note that $l=3$ for the algebra $B$ and 
$\hdet \sigma=q$, using the identity
$$\mu_{B^{\sigma}}=\mu_{B}\circ \sigma^{l}\circ \xi_{\hdet \sigma}^{-1}$$
[Lemma \ref{xxlem1.4}] or \cite[Theorem 5.4(a)]{RRZ1}, we have a 
formula for the Nakayama automorphism of $A(3)$:
\begin{equation}
\label{E1.5.3}\tag{E1.5.3}
\mu_{A(3)}: t_1\to q^{-1} t_1, \; t_2\to q^{-1}(t_2-3t_1), \; t_3\to q^{2} t_3.
\end{equation}

For the next algebra, we need to work with a ${\mathbb Z}^2$-graded twist.
Let $B$ be the commutative polynomial ring $k[t_1,t_2,t_3]$ with 
${\mathbb Z}^2$-grading determined by 
$\deg t_1=\deg t_2=(1,0)$ and $\deg t_3=(0,1)$. Consider two
graded algebra automorphisms $\sigma_1: t_1\to t_1, t_2\to t_2-t_1,
t_3\to t_3$ and $\sigma_2: t_1\to p^{-1} t_1, t_2\to p^{-1} t_2, t_3\to t_3$
where $p$ is a nonzero scalar.
Define a twisting system $\sigma=\{\sigma_{a,b}=\sigma_1^a
\sigma_2^b\mid (a,b)\in {\mathbb Z}^2\}$. The graded twist $B^{\sigma}$
is isomorphic to
$$A(4)=k\langle t_1,t_2,t_3\rangle/((t_2+t_1)t_1-t_1t_2, t_3t_1-p t_1t_3,
t_3t_2-p t_2t_3).$$
Note that $l=(2,1)$ and $\hdet \sigma=(1, p^{-2})$.
By \cite[Theorem 5.4(a)]{RRZ1}, we have a formula for the Nakayama
automorphism
\begin{equation}
\label{E1.5.4}\tag{E1.5.4}
\mu_{A(4)}: t_1\to p^{-1} t_1,\; t_2\to p^{-1}(t_2-2t_1), \; t_3\to p^{2} t_3.
\end{equation}

The algebra
$$A(5):=k\langle t_1,t_2,t_3\rangle/((t_2+t_1)t_1-t_1t_2, (t_3+t_2+t_1)t_1-t_1t_3,
(t_3+t_2+t_1)t_2-(t_2+t_1)t_3)$$
is a graded twist $B^\sigma$ with $\sigma: t_1\to t_1, t_2\to t_2-t_1, t_3\to t_3-t_2$.
So the Nakayama automorphism of $A(5)$ is determined by
\begin{equation}
\label{E1.5.5}\tag{E1.5.5}
\mu_{A(5)}: t_1\to t_1, \; t_2\to t_2-3 t_1, \; t_3\to t_3- 3t_2+ 3t_1.
\end{equation}

Graded down-up algebras $A(\alpha,\beta)$ have been studied by several
researchers. By definition,
$A(\alpha, \beta)$ is generated by $x$ and $y$ and subject to the relations
$$x^2 y-\alpha xyx-\beta yx^2, \quad
xy^2-\alpha yxy-\beta y^2 x.$$
This is an AS regular algebra when $\beta\neq 0$. This class of algebras
are not Koszul, but $3$-Koszul.
Graded automorphisms of $A(\alpha,\beta)$ have been worked out in
\cite{KK}. Consider the characteristic equation
$$w^2-\alpha w-\beta =0$$
and let $w_1$ and $w_2$ be the roots of the above equation.
Then $\Omega:=xy-w_1yx$ is a normal regular element and
$A(\alpha,\beta)/(\Omega)$ is a skew polynomial ring of
global dimension two. Since we know the Nakayama automorphism
of $A(\alpha,\beta)/(\Omega)$ by \cite[Proposition 4.1]{LWW},
Lemma \ref{xxlem1.3} shows that the Nakayama automorphism
of $A(6):=A(\alpha,\beta)$ is determined by
\begin{equation}
\label{E1.5.6}\tag{E1.5.6}
\mu_{A(6)}: x\to -\beta x, \quad y\to -\beta^{-1}y.
\end{equation}

Another class of non-Koszul AS regular is
$A(7):=S(p)$ which is generated by $x$ and $y$ and subject to
relations
$$x^2y-pyx^2, \quad xy^2+py^2x$$
where $p\in k^\times $. Note that $z:=y^2$ is a normal element
such that $\mu_{A(7)}(z)=c z$ since the Nakayama automorphism
preserves ${\mathbb Z}^2$-grading of $A(7)$. It is easy to see
that $ za=\tau(a) z$ where $\tau\in \Aut(A(7))$ maps $x$ to
$-p^{-1} x$ and $y$ to $y$. By Lemma \ref{xxlem1.3},
$\mu_{A(7)/(z)}=\mu_{A(7)}\circ \tau$. Applying Lemma \ref{xxlem1.3}
to the algebra $A(6)$ with $z=y^2$, we have
$\mu_{A(6)/(z)}=\mu_{A(6)}\circ \tau'$ where $\tau'$ maps
$x$ to $\beta^{-1}x$ and $y$ to $y$. Note that $A(6)/(y^2)\mid_{\alpha=0,\beta=p}
=A(7)/(y^2)$. Therefore, when $\alpha=0$ and $\beta=p$,
$$\mu_{A(7)}\circ \tau=\mu_{A(7)/(z)}=\mu_{A(6)/(z)}=\mu_{A(6)}\circ \tau'.$$
Then, by an easy calculation and \eqref{E1.5.6}, we obtain
that
\begin{equation}
\label{E1.5.7}\tag{E1.5.7}
\mu_{A(7)}: x\to p x, \quad y\to -p^{-1}y.
\end{equation}

Finally, let us mention two classes of AS regular algebras of dimension five
for which the Nakayama automorphism has been computed by Liu-Wang-Wu 
\cite{LWW}. As in \cite{LWW}, the algebras ${\bf D}$
and ${\bf G}$ are of the form $k\langle x,y\rangle (r_1,r_2,r_3)$
where $r_i$ are relations. For the algebra ${\bf D}$, the three
relations are
$$\begin{aligned}
r_{D1}&=x^3 y+ p x^2 y x+ q x y x^2 -p(2p^2+q) yx^3,\\
r_{D2}&=x^2 y^2 -p(p^2+q) yxyx - q^2 y^2 x^2
+(q-p^2) x y^2 x+(q-p^2) y x^2 y,\\
r_{D3}&= x y^3 + py x y^2 + q y^2 x y-p(2p^2+q) y^3 x,
\end{aligned}
$$
where $p,q\in k^\times$ and $2 p^4-p^2q+q^2=0$. By
\cite[Theorem 4.3(1)]{LWW},
the Nakayama automorphism of ${\bf D}$ is given by
\begin{equation}
\label{E1.5.8}\tag{E1.5.8}
\mu_{\bf D}: x\to p^{-3} q^{4} x,\quad y\to p^{3}q^{-4} y.
\end{equation}

For the algebra ${\bf G}$, the  three relations are
$$\begin{aligned}
r_{G1}&=x^3 y+ px^2 y x + q x y x^2 +s y x^3,\\
r_{G2}&= x^2 y^2+l_2 xyxy +l_3 yxyx +l_4 y^2 x^2 +l_5 xy^2 x +l_5 yx^2 y,\\
r_{G3}&=xy^3+p y x y^2 + q y^2 x y+s y^3 x,
\end{aligned}
$$
where
$$l_2=\frac{-s^2(qs-g)}{g(qs+g)}, \quad
l_3=s-\frac{pg(ps-q^2)}{q(qs+g)}, \quad
l_4=\frac{-g^2}{s^2}, \quad l_5=\frac{ps^2+qg}{qs+g},
$$
with $p,q,s,g\in k^\times$, $ps^3g+qsg^2+s^5+g^3=0$, $p^3s=q^3$,
$ps\neq q^2$, $q^2s^2\neq g^2$ and $s^5+g^3\neq 0$.
By \cite[Theorem 4.3(2)]{LWW},
the Nakayama automorphism of ${\bf G}$ is given by
\begin{equation}
\label{E1.5.9}\tag{E1.5.9}
\mu_{\bf G}: x\to g  x, \quad y\to g^{-1} y.
\end{equation}

The above are all AS regular algebras that we will be dealing with.

\section{Hopf actions on $A$ with diagonalizable $\mu_A$}
\label{xxsec2}

In this and the next sections we study finite dimensional Hopf actions on
AS regular algebras and partially answer Questions \ref{xxque0.1} and
\ref{xxque0.2}(2,4). We impose the following standard hypotheses for
the rest of the section.

\begin{hypothesis}\cite[Hypothesis 0.3]{CWZ}
\label{xxhyp2.1} We assume that
\begin{enumerate}
\item[(i)]
$H$ is a finite dimensional Hopf algebra.
\item[(ii)]
$A$ is a connected graded AS regular algebra generated in degree 1.
\item[(iii)]
$H$ acts on $A$ inner faithfully, namely,  there is no nonzero Hopf ideal
$I\subset H$ such that $IA= 0$.
\item[(iv)]
The $H$-action on $A$ preserves the grading of A.
\end{enumerate}
\end{hypothesis}

Let $A$ be an AS regular algebra generated in degree 1
and let $V:=A_1$ be the degree 1 piece of $A$. Let $K:=H^{\circ}$
be the dual Hopf algebra of $H$. Then a left $H$-action on $A$ is
equivalent to a right $K$-coaction on $A$.

We say that a right $K$-coaction on $A$ is inner faithful if for
any proper Hopf subalgebra $K'\subsetneq K$,  we have that
$\rho(A)\nsubseteq A\otimes K'$. The main tool of this paper is the
following homological identity proved in \cite[Theorem 0.1]{CWZ}
with an improved version given in \cite{RRZ2}.

\begin{theorem}\cite[Theorem 4.3]{RRZ2}
\label{xxthm2.2}
Let $A$ be a noetherian
AS regular algebra with Nakayama automorphism $\mu_A$.
Let $K$ be a Hopf algebra with bijective antipode $S$ coacting on $A$
inner faithfully from the right. Suppose that the homological codeterminant
\cite[Definition 1.5(b)]{CWZ} of the $K$-coaction on $A$ is the element
$D \in K$. Then
\begin{equation}
\label{E2.2.1}\tag{E2.2.1}
\eta_{D}\circ S^2=\eta_{\mu_{A}^{\tau}}
\end{equation}
where $\eta_D$ is the automorphism of $K$ defined by conjugating by $D$ and
$\eta_{\mu_A^{\tau}}$ is the automorphism of $K$ given by conjugating by
the transpose of the corresponding matrix of $\mu_A$.
\end{theorem}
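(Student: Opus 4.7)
The plan is to translate the coaction and the Nakayama automorphism into matrix identities in $M_n(K)$ and then read off the desired identity from a careful analysis of the minimal bimodule resolution of $A$.

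First, I would fix an ordered basis $\{x_1,\ldots,x_n\}$ of $V:=A_1$ and record the coaction on generators as a matrix $M=(m_{ij})\in M_n(K)$ via $\rho(x_j)=\sum_i x_i\otimes m_{ij}$. Similarly, since $\mu_A$ preserves the grading, $\mu_A|_V$ is given by a matrix $N$, and $\mu_A^\tau$ corresponds by construction to $N^t$. Observe that $\eta_D\circ S^2$ and $\eta_{\mu_A^\tau}$ are both algebra automorphisms of $K$, and since the coaction is inner faithful, $K$ is generated as an algebra by the matrix entries $m_{ij}$ (possibly together with $S(m_{ij})$). So it suffices to show that both automorphisms produce the same result on each $m_{ij}$; equivalently, that the corresponding matrix equation holds in $M_n(K)$.

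Second, because $A$ is noetherian AS regular, it admits a minimal bimodule resolution $P_\bullet\to A\to 0$ whose top term is of the form $A\otimes k\omega\otimes A$, and the computation $\Ext^d_{A^e}(A,A^e)\cong {}^1A^{\mu_A}$ recovers $\mu_A$ precisely from the twist between the two $A$-actions on this top term. The right $K$-coaction on $A$ extends canonically to $P_\bullet$ (functorially, because the differentials are $A^e$-linear and $K$-colinear), and on the one-dimensional line $k\omega$ the induced coaction is $\omega\mapsto\omega\otimes D$, with $D\in K$ the homological codeterminant by definition.

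Third, to extract the identity I would apply $\Hom_{A^e}(-,A^e)$ to the resolution and track how the coaction moves across these $\Hom$'s. Each passage from a right coaction on a module to the induced coaction on its linear dual introduces an antipode $S$; doing this on both the left and the right $A$-factor in the $A^e$-setting contributes $S^2$. Meanwhile the extension of the coaction to the dualized top generator acquires a twist by $D$. Comparing the two descriptions of the coaction on $V$ coming from the isomorphism $\Ext^d_{A^e}(A,A^e)\cong{}^1A^{\mu_A}$ yields a matrix equation of the shape
\begin{equation*}
D^{-1}\cdot S^2(M)\cdot D \;=\; N^t\cdot M\cdot (N^t)^{-1}
\end{equation*}
in $M_n(K)$, which read entry-by-entry says $\eta_D(S^2(m_{ij}))=\eta_{\mu_A^\tau}(m_{ij})$ for all $i,j$. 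Combined with inner faithfulness this gives \eqref{E2.2.1}.

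The principal obstacle is the bookkeeping in the third step: correctly accounting for all occurrences of $S$, $S^{-1}$ and $S^2$ as one transfers coactions through the $\Hom_{A^e}(-,A^e)$ functor on both sides of the bimodule, and placing $D$ on the correct side so that the resulting conjugation yields $\mu_A^\tau$ rather than $\mu_A$ (the transpose is exactly what emerges from the switch between left and right module structures). A secondary technical point is verifying that the coaction extended to the top of the bimodule resolution really is a scalar multiplication by a group-like $D$ — this uses that the top of a minimal AS resolution is one-dimensional and that $\rho$ is an algebra map, so that the induced comultiplication on $D$ forces group-likeness.
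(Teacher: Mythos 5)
The paper does not prove this statement at all: it is imported verbatim from \cite[Theorem 4.3]{RRZ2} (with an earlier version in \cite[Theorem 0.1]{CWZ}), and the text following the theorem only explains the notation. So there is no internal proof to compare against; your proposal has to stand on its own, and as written it does not. Your overall strategy --- lift the coaction to a bimodule resolution, apply $\Hom_{A^e}(-,A^e)$, and compare the two descriptions of $\Ext^d_{A^e}(A,A^e)\cong{}^1A^{\mu_A}$ --- is indeed the spirit of the original argument in \cite{CWZ} (the published proof in \cite{RRZ2} actually routes through the Frobenius structure of the Ext-algebra $E=\Ext^*_A(k,k)$ and its own Nakayama automorphism, which is a cleaner way to organize the same information). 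But the entire content of the theorem lives in the step you defer to ``bookkeeping'': whether dualization contributes $S^2$ or $S^{-2}$, and on which side $D$ conjugates, \emph{is} the statement being proved. Asserting that ``each passage to a dual introduces an antipode'' and that the two sides combine to $S^2$ is not a mechanism; without writing down the induced coaction on $\Hom_{A^e}(P_i,A^e)$ explicitly and chasing it through the quasi-isomorphism to ${}^1A^{\mu_A}$, one cannot rule out the identity coming out with $D$ on the other side or with $\mu_A$ in place of $\mu_A^{\tau}$.

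Two further concrete gaps. First, you claim the induced coaction on the one-dimensional top $k\omega$ of the minimal \emph{bimodule} resolution is $\omega\mapsto\omega\otimes D$ ``by definition''; but the homological codeterminant of \cite[Definition 1.5(b)]{CWZ} is defined via the coaction on the top of the one-sided Ext-algebra $\Ext^d_A(k,k)$, and identifying these two grouplikes (up to the correct power of $S$) is itself a lemma requiring proof --- it is exactly the kind of place where an unwanted $S$ or an inverse creeps in. Second, the existence of a $K$-comodule structure on the minimal free bimodule resolution compatible with the differentials is asserted rather than constructed; this is true but needs an argument (e.g., via the natural coaction on $\operatorname{Tor}^A_\bullet(k,k)$ and minimality), and your parenthetical ``functorially, because the differentials are $A^e$-linear and $K$-colinear'' is circular, since $K$-colinearity of the differentials is what has to be established. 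Minor point: grouplikeness of $D$ follows from coassociativity and the counit axiom applied to a one-dimensional comodule, not from $\rho$ being an algebra map.
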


Here are some explanations from \cite{CWZ}.
The automorphism on the left-hand side of equation \eqref{E2.2.1} is the
composition of the Hopf algebra automorphism $S^2$ of $K$ (which is bijective
by hypothesis) and the Hopf algebra automorphism $\eta_D$ of $K$ where
$\eta_D$ is given by $\eta_D(a) = D^{-1} aD$ for all $a\in K$. To understand the
right-hand side of equation \eqref{E2.2.1} we start with a $k$-linear basis,
say $\{v_1, \cdots, v_n\}$ of $A_1$, the degree $1$ graded piece of
$A$. Then the Nakayama automorphism $\mu_A$ can be written as
\begin{equation}
\label{E2.2.2}\tag{E2.2.2}
\mu_A(v_i)=\sum_{i=1}^{n} m_{ij} v_j, \quad {\text{for all }}\; i=1,\cdots, n.
\end{equation}
Let ${\mathbb M}$ be the $n \times n$-matrix $(m_{ij})_{n\times n}$ over the base
field $k$.  Let $\rho: A\to A\otimes K$ be the right $K$-coaction on $A$. Then
\begin{equation}
\label{E2.2.3}\tag{E2.2.3}
\rho(v_i)=\sum_{j} v_j\otimes y_{ji},\quad {\text{for all }}\; i=1,\cdots, n
\end{equation}
for some $y_{ji}\in K$. Then $\Delta(y_{st})=\sum_{j=1}^n y_{sj}
\otimes y_{jt}$ and $\epsilon(y_{st})=\delta_{st}$ for all $s,t$.
Let $\rho^*$ be the left $K$-coaction on  the $\Ext$-algebra $E :=\Ext^*_A(k,k)$
induced by the $K$-coaction on $A$. Then we  have
$$\rho^*(v_i^*)=\sum_{s=1}^n y_{is}\otimes v_s^*,
\quad {\text{for all }}\; i=1,\cdots, n.$$
Since the $K$-coaction on $A$ is inner faithful,
$\{y_{ij}\}_{1\leq i,j \leq n}$ generates $K$ as a Hopf algebra.
With this choice of basis $\{v_i\}_{i=1}^n$, we define
\begin{equation}
\label{E2.2.4}\tag{E2.2.4}
\eta_{\mu_A^\tau}: y_{ij}\to \sum_{s,t=1}^n m_{si} y_{st} w_{jt}
\end{equation}
for all $1 \leq i, j \leq n$ where the matrix
${\mathbb W}:=(w_{ij})_{n\times n}=(m_{ij})_{n\times n}^{-1}$.
Roughly speaking, we use coordinates to define the conjugation automorphism
$\eta_{\mu_A^{\tau}}$ of $K$. Theorem \ref{xxthm2.2} implies that 
the definition of
this automorphism is independent of the choice of coordinates.

Since $K$ is finite dimensional, $\eta_D\circ S^2$ has finite order. So
it follows from \eqref{E2.2.1} that $\eta_{\mu_A^{\tau}}$ is of finite order.
In fact, the order of $\eta_{\mu_A^{\tau}}$ divides $ 2\dim_k K$,
see the proof of \cite[Theorem 4.3]{CWZ}.

In this section, we consider the case when ${\mathbb M}$ is a diagonal matrix.
Let $A$ be a noetherian AS regular algebra generated by $A_1=V$.
We say the Nakayama automorphism $\mu_A$ has eigenvalues
$\{\lambda_1,\cdots,\lambda_{n}\}$ if there is a $k$-linear basis
$\{v_1,\cdots,v_n\}$ of $V$ such that $\mu(v_i)=\lambda_i v_i$
for all $i=1,\cdots,n$. From now on we assume Hypothesis \ref{xxhyp2.1}.

\begin{lemma}
\label{xxlem2.3} Let $A$ be an AS regular algebra
generated by $V= A_1$ and $\mu_A$ be the Nakayama
automorphism of $A$. Suppose $\mu_A$ has eigenvalues
$\{\lambda_1,\cdots,\lambda_{n}\}$.
\begin{enumerate}
\item[(1)]
If $\lambda_i \lambda_j^{-1}$ is not a root of unity for a
pair $(i,j)$, then $y_{ij}=y_{ji}=0$.
\item[(2)]
If $\lambda_i \lambda_j^{-1}$
is not a root of unity for all $i\neq j$, then $K$ is a group algebra.
As a consequence, $H$ and $K$ are semisimple.
\item[(3)]
Suppose the setting of part {\rm{(2)}}.
If, further, $v_jv_i=p_{ij} v_i v_j\neq 0$ for all $i\neq j$
for some $p_{ij}\in k^\times$, then $H$ and $K$ are commutative group
algebras.
\end{enumerate}
For the next three parts, we further assume that $H$ is semisimple
and the $H$-action on $A$ has  trivial homological determinant.
\begin{enumerate}
\item[(4)]
If $\lambda_i \lambda_j^{-1}$ is not 1 for a
pair $(i,j)$, then $y_{ij}=y_{ji}=0$.
\item[(5)]
If $\lambda_i \lambda_j^{-1}$
is not 1 for all $i\neq j$, then $K$ is a group algebra.
As a consequence, $H$ and $K$ are semisimple.
\item[(6)]
Suppose the setting of part {\rm{(5)}}.
If, further, $v_jv_i=p_{ij} v_i v_j\neq 0$ for all $i\neq j$
for some $p_{ij}\in k^\times$, then $H$ and $K$ are  commutative group
algebras.
\end{enumerate}
\end{lemma}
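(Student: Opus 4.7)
The plan is to exploit the diagonality of $\mu_A$ in the chosen basis to turn the conjugation identity \eqref{E2.2.1} into a single diagonal eigenvalue relation on the generators $y_{ij}$ of $K$. With $\mathbb{M}=\mathrm{diag}(\lambda_1,\ldots,\lambda_n)$, so that $\mathbb{W}=\mathbb{M}^{-1}=\mathrm{diag}(\lambda_1^{-1},\ldots,\lambda_n^{-1})$, the double sum in \eqref{E2.2.4} collapses to the single term $s=i$, $t=j$, giving
$$\eta_{\mu_A^\tau}(y_{ij}) \;=\; \lambda_i\,\lambda_j^{-1}\, y_{ij}$$
for all $i,j$. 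This is the computational heart of the lemma, and everything else is a consequence.

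For part (1), Theorem \ref{xxthm2.2} identifies $\eta_{\mu_A^\tau}$ with $\eta_D\circ S^2$, which is an automorphism of the finite-dimensional Hopf algebra $K$ and hence has finite order $N$ (indeed dividing $2\dim_k K$). Applying the $N$-th power of the displayed formula yields $(\lambda_i\lambda_j^{-1})^N y_{ij}=y_{ij}$, so $y_{ij}=0$ whenever $\lambda_i\lambda_j^{-1}$ is not a root of unity. Swapping the roles of $i$ and $j$ simultaneously kills $y_{ji}$. For part (2), applying (1) to every pair $i\neq j$ reduces the coaction to $\rho(v_i)=v_i\otimes y_{ii}$. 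The comultiplication and counit identities stated after \eqref{E2.2.3} then degenerate to $\Delta(y_{ii})=y_{ii}\otimes y_{ii}$ and $\epsilon(y_{ii})=1$, exhibiting each $y_{ii}$ as a grouplike element; by the inner-faithfulness hypothesis these generate $K$, so $K$ is a group algebra $kG$ of a finite group $G$. In characteristic zero both $K$ and $H=K^\circ$ are then semisimple. For part (3), apply the algebra map $\rho$ to both sides of $v_jv_i=p_{ij}v_iv_j$; since $v_iv_j\neq 0$ one can cancel the $A$-component to obtain $y_{jj}y_{ii}=y_{ii}y_{jj}$. Thus $G$ is abelian, and over an algebraically closed field of characteristic zero Pontryagin duality makes $H=K^\circ\cong k\widehat{G}$ a commutative group algebra as well.

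Parts (4)--(6) follow the same template, except that the finite-order argument is replaced by the sharper observation that $\eta_{\mu_A^\tau}=\mathrm{id}$ identically. Indeed, for semisimple $H$ (equivalently, semisimple $K$) one has $S^2=\mathrm{id}$ by the Larson--Radford theorem in characteristic zero, and the assumption of trivial homological determinant means that the codeterminant element is $D=1$, so the right-hand side of \eqref{E2.2.1} is the identity automorphism. The eigenvalue relation then forces $\lambda_i\lambda_j^{-1}\,y_{ij}=y_{ij}$, so $y_{ij}=0$ as soon as $\lambda_i\lambda_j^{-1}\neq 1$; the remainder of (5) and (6) is verbatim (2) and (3).

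The proof is largely a bookkeeping exercise once the diagonal eigenvalue formula is in hand, so there is no single serious obstacle; the one point where I would be careful is verifying that ``trivial homological determinant'' of the $H$-action on $A$ translates on the $K$-side to $D=1$ (rather than $D^{-1}=1$ or some other convention), which requires cross-checking the sign convention of \cite[Definition 1.5(b)]{CWZ} used in Theorem \ref{xxthm2.2}. A secondary small point is to justify that a finite-dimensional Hopf algebra generated as an algebra by grouplike elements is a group algebra, which is immediate from the fact that grouplike elements in a Hopf algebra are linearly independent and closed under the antipode.
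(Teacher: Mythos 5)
Your proposal is correct and follows essentially the same route as the paper: the diagonal formula $\eta_{\mu_A^\tau}(y_{ij})=\lambda_i\lambda_j^{-1}y_{ij}$, the finite-order argument for (1)--(3), and the identification $\eta_{\mu_A^\tau}=\mathrm{id}$ via Larson--Radford and $D=1$ for (4)--(6), with the grouplike/commutativity conclusions drawn exactly as in the paper's proof (which cites Montgomery for the final duality step you phrase as Pontryagin duality). No gaps.
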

\begin{proof} (1)  By \eqref{E2.2.4},
$$\eta_{\mu_A^{\tau}}(y_{ij})=\lambda_i\lambda_j^{-1} y_{ij}.$$
By Theorem \ref{xxthm2.2}, $\eta_{D}\circ S^2(y_{ij})=
\eta_{\mu_A^{\tau}}(y_{ij})$ for all $i,j$. Since $K$ is finite
dimensional, the automorphism $\eta_{D}\circ S^2$ has
finite order. Hence, $\eta_{\mu_A^{\tau}}$ has finite
order, and therefore there is an $N$ such that
$$y_{ij}=(\eta_{\mu_A^{\tau}})^N(y_{ij})=(\lambda_i\lambda_j^{-1})^N
y_{ij}.$$
Since $\lambda_i\lambda_j^{-1}$ is not a root of unity,
$y_{ij}=0$. By symmetry, $y_{ji}=0$.

(2) If $\lambda_i\lambda_j^{-1}$ is not a root of unity for all $i\neq j$,
then $y_{ij}=0$ for all $i\neq j$ by part (1). Thus $\rho(v_i)=
v_i\otimes y_{ii}$ and $y_{ii}$ is a group-like element. Since the $K$-coaction
on $A$ is inner faithful, $K$ is generated by $\{y_{ii}\}_{i=1}^n$.
So $K$ is a group algebra.

(3) Applying $\rho$ to the equation $v_j v_i=p_{ij} v_i v_j$, we obtain
that
$$v_j v_i\otimes y_{jj} y_{ii}=\rho(v_jv_i)=p_{ij}
\rho(v_i v_j)=p_{ij}v_i v_j\otimes y_{ii}y_{jj}=
v_jv_i\otimes y_{ii}y_{jj}.$$
Hence $y_{ii}$ commutes with $y_{jj}$.
So $K$ is a commutative group algebra. By \cite[Theorem 2.3.1]{Mo},
$H$ is a commutative group algebra.

(4-6) We have $S^2=Id$ and $D=1_{K}$ since $H$ (and hence $K$) is 
semisimple and the $H$-action
has  trivial homological determinant. By Theorem
\ref{xxthm2.2}, $\eta_{\mu_{A}^{\tau}}$ is the identity map of $K$.
Recall that $\eta_{\mu_A^{\tau}}(y_{ij})=\lambda_i\lambda_j^{-1} y_{ij}$.
If $\lambda_i \lambda_j\neq 1$, then $y_{ij}=0$. The rest of the argument
is similar to the proof of (1-3).
\end{proof}

\begin{proposition}
\label{xxpro2.4}
Let $A$ and $B$ be AS regular algebras generated in degree 1
such that $A\otimes B$ is noetherian. Assume that
the Nakayama automorphisms $\mu_A$ and $\mu_B$
have eigenvalues $\{\lambda_{1},\cdots,\lambda_{m}\}$
and $\{\lambda_{m+1},\cdots,\lambda_{m+n}\}$ respectively.
Suppose that
\begin{enumerate}
\item[(1)]
The only Hopf {\rm{(}}resp. semisimple Hopf{\rm{)}} actions
on $A$ and $B$ are group actions.
\item[(2)]
$\lambda_{i}\lambda_{j}^{-1}$ are not roots of unity
for all $i\leq m$ and $j> m$.
\end{enumerate}
Then every Hopf {\rm{(}}resp. semisimple Hopf{\rm{)}} action on
$A\otimes B$ is a group action.
\end{proposition}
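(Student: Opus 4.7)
The plan is to apply Lemma~\ref{xxlem2.3}(1) to the algebra $A\otimes B$, whose Nakayama automorphism equals $\mu_A\otimes \mu_B$ and so acts diagonally on the degree~$1$ piece $V_A\oplus V_B$ with eigenvalues $\lambda_1,\ldots,\lambda_{m+n}$ in a basis $\{v_1,\ldots,v_{m+n}\}$ respecting the decomposition. Given $H$ acting inner-faithfully on $A\otimes B$, I would write $\rho(v_i)=\sum_j v_j\otimes y_{ji}$ for the right coaction of $K:=H^\circ$. Hypothesis~(2) combined with Lemma~\ref{xxlem2.3}(1) then forces $y_{ij}=0$ whenever exactly one of $i,j$ is at most $m$, so the comatrix $(y_{ij})$ is block diagonal of sizes $m$ and $n$.

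Next I would define $K_A$ (resp.\ $K_B$) to be the Hopf subalgebra of $K$ generated by $\{y_{ij}:i,j\le m\}$ (resp.\ $\{y_{ij}:i,j>m\}$). Block-diagonality of $(y_{ij})$ immediately gives $\rho(A)\subseteq A\otimes K_A$ and $\rho(B)\subseteq B\otimes K_B$, and by the choice of generators each restricted coaction is inner-faithful on its factor. Invoking hypothesis~(1) then produces the key structural information: in the plain version, $H_A:=K_A^\circ$ and $H_B:=K_B^\circ$ are Hopf algebras acting inner-faithfully on $A$ and $B$ and so are group algebras; in the semisimple version, one additionally uses that semisimplicity descends from $H$ to $K$ and then to the Hopf subalgebras $K_A,K_B$ to deduce that $H_A,H_B$ are semisimple, hence again group algebras by hypothesis~(1). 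Either way, $K_A$ and $K_B$ are commutative semisimple Hopf algebras of the form $k^{G_A}$ and $k^{G_B}$.

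To finish, I would apply $\rho$ to the relation $v_iv_j=v_jv_i$ (valid in $A\otimes B$ for $i\le m<j$) and compare coefficients of the linearly independent monomials $v_kv_l$ with $k\le m<l$ to obtain $y_{ki}y_{lj}=y_{lj}y_{ki}$; so $K_A$ and $K_B$ commute inside $K$. The multiplication map $K_A\otimes K_B\to K$ is therefore a Hopf algebra map, and it is surjective because inner faithfulness of the $K$-coaction on $A\otimes B$ forces $K$ to be generated as an algebra by all of the $y_{ij}$, which lie in $K_A\cup K_B$. Thus $K$ is a Hopf algebra quotient of the commutative semisimple Hopf algebra $K_A\otimes K_B$, hence $K\cong k^G$ for some finite abelian group $G$, and $H\cong k[G]$ is a group algebra. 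The main hurdle is extracting from hypothesis~(2) the block structure that makes $K_A$ and $K_B$ commuting Hopf subalgebras coacting inner-faithfully on $A$ and $B$; once that is in hand, hypothesis~(1) together with basic structure theory for commutative semisimple Hopf algebras finishes the argument.
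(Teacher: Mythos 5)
Your overall strategy is the paper's: you diagonalize $\mu_{A\otimes B}=\mu_A\otimes\mu_B$, use Lemma~\ref{xxlem2.3}(1) together with hypothesis~(2) to kill the off-diagonal blocks of the comatrix $(y_{ij})$, restrict the coaction to the Hopf subalgebras $K_A$ and $K_B$, invoke hypothesis~(1) (with the Larson--Radford descent of semisimplicity in the semisimple case) to conclude that $K_A$ and $K_B$ are commutative, and then argue that $K$ itself is commutative so that $H=K^{\circ}$ is a group algebra. All of this matches the paper's proof step for step.

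The one place where your argument has a real gap is the sentence ``so $K_A$ and $K_B$ commute inside $K$.'' Applying $\rho$ to $v_iv_j=v_jv_i$ only shows that the comatrix entries $y_{ki}$ with $k,i\le m$ commute with the entries $y_{lj}$ with $l,j>m$. But $K_A$ and $K_B$ are \emph{Hopf} subalgebras: as algebras they are generated by those entries together with all of their iterated antipodes, and the commutation of $S^{r}(y_{ki})$ with $S^{s}(y_{lj})$ does not follow formally from the commutation of the entries themselves; without it the multiplication map $K_A\otimes K_B\to K$ need not be an algebra map. The paper devotes the final part of its proof to exactly this point: it regards $X=(y_{ij})$ as an element of $M_{m+n}(K)$ lying in the centralizer of the subalgebra generated by the $y_{st}$ with $s,t>m$, notes that the antipode axiom gives $X^{-1}=(S(y_{ij}))$, and uses that a centralizer is closed under matrix inversion to conclude that the antipodes of the entries also centralize that subalgebra, then iterates and symmetrizes. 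You need either this argument or a substitute --- for instance, once $K_A\cong k^{G_A}$ is known, one can check that the subalgebra generated by $\{y_{ij}\}_{i,j\le m}$ is a sub-bialgebra of $k^{G_A}$ and hence already antipode-stable (dually, every quotient bialgebra of $kG_A$ is a quotient Hopf algebra, since images of group elements are invertible grouplikes), so it coincides with $K_A$. A minor further point: commutativity of $K$ gives $K\cong k^{G}$ and $H\cong kG$, but there is no reason for $G$ to be abelian, nor is that needed for the stated conclusion.
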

\begin{proof} Let $H$ act on $A\otimes B$ satisfying
Hypothesis \ref{xxhyp2.1}. Then $K:=H^{\circ}$ coacts on $A$.
Since $\mu_{A\otimes B}=\mu_A\otimes \mu_B$, there
is a basis $\{v_1,\cdots,v_m\}\cup\{v_{m+1},\cdots,v_{m+n}\}$
such that $\mu_{A\otimes B}$ has eigenvalues
$\{\lambda_{1},\cdots,\lambda_{m+n}\}$ with respect to
this basis, where $A_1=\sum_{i=1}^m kv_i$ and $B_1=
\sum_{i=m+1}^{m+n} kv_i$. By Lemma \ref{xxlem2.3}(1),  $y_{ij}=0$ if
$(i\leq m, j>m)$ or $(i>m, j\leq m)$.

We make the following remark for the semisimple case. If $H$ is
semisimple, by Larson-Radford \cite{LR1,LR2}, 
$K$ is semisimple, cosemisimple and
involutory (namely, $S^2=Id$). So every Hopf subalgebra of $K$ is
involutory, whence semisimple and cosemisimple.

Let $K_A$ be the Hopf subalgebra of $K$ generated by $\{y_{ij}
\mid 1\leq i,j  \leq m\}$ and $K_B$ be the Hopf subalgebra of $K$
generated by $\{y_{ij}\mid m+1\leq i,j  \leq m+n\}$.
So $K_A$ coacts on $A$ inner faithfully. Now the dual algebra
$(K_A)^{\circ}$ acts on $A$ satisfying Hypothesis
\ref{xxhyp2.1}.  (when $H$ is semisimple, then so is $(K_A)^\circ$.)
By hypothesis (1), $(K_A)^{\circ}$
is a group algebra, so  $K_A$ is commutative.
Similarly, $K_B$ is commutative.
Any $i\leq m$ and $s\geq m+1$, $v_i$ commutes with $v_s$. So, after
applying $\rho$, we have that $y_{ij}$ commutes with $y_{st}$
for all $1\leq i,j \leq m$ and $m+1\leq s,t\leq m+n$.

Finally we will show that $K$ is commutative. Since $K$ is generated
by $K_A$ and $K_B$, it suffices to show that $K_A$ commutes
with $K_B$. Let
$R=M_{n\times n} (K)$ and consider $K$ as the diagonal
subalgebra of $R$.  For any subalgebra $B\subset R$, let
$C_B(K)$ be the centralizer $\{X\in R\mid Xa=aX,
\forall\; a\in B\}$. Let $B$ be the subalgebra
generated by $\{y_{st}\mid m+1\leq s,t \leq m+n\}$.
Let $X=(y_{ij})_{n\times n}$. Then $X\in C_B(K)$.
By the antipode axiom, $X^{-1}=(S(y_{ij}))$.
Since $X^{-1}\in C_{B}(K)$, $S(y_{ij})$
commutes with $y_{st}$ for all $m+1\leq s,t\leq m+n$.
This $K_A$ commutes with $B$. Similarly, one shows that
$K_A$ commutes with $K_B$. Now we have
that $K$ is commutative. Since the base field $k$
is algebraically closed  of characteristic zero, $H=K^\circ$
is a group algebra.
\end{proof}

\begin{remark}
\label{xxrem2.5} Proposition \ref{xxpro2.4} applies to 
the ``twisted''
tensor product $A\otimes_q B$ under suitable hypotheses.
\end{remark}

One immediate consequence is

\begin{corollary}
\label{xxcor2.6}
Suppose $\{p_{ij}\mid 1\leq i<j\leq m\}$ are generic parameters.
Then every semisimple Hopf action on $k_{p_{ij}}[x_1,\cdots,x_m]
\otimes k[y_1,\cdots,y_n]$ is a group action.
\end{corollary}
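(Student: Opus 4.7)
The plan is to apply Proposition~\ref{xxpro2.4} to $A = k_{p_{ij}}[x_1,\ldots,x_m]$ and $B = k[y_1,\ldots,y_n]$. Both are noetherian AS regular algebras generated in degree $1$, and their tensor product is noetherian. It remains to verify the two hypotheses of Proposition~\ref{xxpro2.4} in the semisimple version.

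For hypothesis (1) on $A$, the $m$-variable generalization of \eqref{E1.5.1} gives $\mu_A(x_i) = \bigl(\prod_{s=1}^m p_{si}\bigr) x_i$, so the eigenvalues $\lambda_i = \prod_{s=1}^m p_{si}$ are Laurent monomials in the $p_{ij}$. For generic parameters no cross-ratio $\lambda_i \lambda_j^{-1}$ with $i \neq j$ is a root of unity, and because $A$ satisfies skew-commutation relations $x_j x_i = p_{ij} x_i x_j$, Lemma~\ref{xxlem2.3}(3) shows that every Hopf action on $A$ (in particular every semisimple one) is a commutative group algebra action. For hypothesis (1) on $B$, since $B$ is a commutative domain, Etingof--Walton \cite[Theorem 1.3]{EW} ensures that every semisimple Hopf action on $B$ is a group action.

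For hypothesis (2), note that $\mu_B = \textup{id}$, so the remaining eigenvalues $\lambda_{m+1} = \cdots = \lambda_{m+n}$ are all $1$. Hence for $i \leq m$ and $j > m$ the cross-ratio reduces to $\lambda_i \lambda_j^{-1} = \lambda_i = \prod_{s=1}^m p_{si}$, which is not a root of unity for generic $p_{ij}$. Proposition~\ref{xxpro2.4} then yields the corollary.

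There is no substantial obstacle: the corollary is essentially a packaging of Proposition~\ref{xxpro2.4} together with the Etingof--Walton result. The only subtle point to record is that the monomial basis $\{x_1,\ldots,x_m\}\cup\{y_1,\ldots,y_n\}$ of $(A \otimes B)_1$ simultaneously diagonalizes $\mu_{A \otimes B} = \mu_A \otimes \mu_B$, so the eigenvalue bookkeeping needed to apply Proposition~\ref{xxpro2.4} is the one already described in its proof.
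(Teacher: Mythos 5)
Your proof is correct and follows the same route as the paper: both reduce the statement to Proposition~\ref{xxpro2.4} with $A = k_{p_{ij}}[x_1,\ldots,x_m]$ and $B = k[y_1,\ldots,y_n]$, invoke Etingof--Walton for the semisimple group-action property of $B$, and check the eigenvalue condition via the Nakayama automorphism of the skew polynomial ring. The only cosmetic difference is that you verify hypothesis (1) for $A$ directly from Lemma~\ref{xxlem2.3}(3) and hypothesis (2) by explicit eigenvalue computation, where the paper instead cites \cite[Theorem 4.3]{CWZ} and \cite[Example 5.5]{RRZ1} for the same facts.
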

\begin{proof}  Let $A=k_{p_{ij}}[x_1,\cdots,x_m]$ and
$B=k[y_1,\cdots,y_n]$. By \cite[Theorem 4.3]{CWZ}, $A$ satisfies the
hypothesis in Proposition \ref{xxpro2.4}(1).
By \cite[Theorem 1.3]{EW}, $B$ satisfies
the hypothesis in Proposition \ref{xxpro2.4}(1) (in the semisimple
Hopf case). By \cite[Example 5.5]{RRZ1}, the hypothesis in Proposition
\ref{xxpro2.4}(2) holds. So the assertion follows.
\end{proof}

Sometimes a similar idea applies to a non-tensor product.
Here is an example.

\begin{corollary}
\label{xxcor2.7}
Let $p\in k^\times$ be not a root of unity and
$s\leq n-2$ . Let
$$A=k\langle t_1,t_2,\cdots, t_{n}\rangle/
(t_{n}t_{n-1}-pt_{n-1}t_n-(\sum_{i=1}^s t_i^2), t_i \;
{\text{central for all $i\leq n-2$}}).$$
Then every semisimple Hopf action on $A$ is a group action.
\end{corollary}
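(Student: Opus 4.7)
The plan mirrors that of Corollary \ref{xxcor2.6}, adapted to this non-tensor-product setting. The Nakayama automorphism of $A$ was computed in \eqref{E1.5.2}: it is diagonal on $A_1$ with eigenvalues $\lambda_i=1$ for $i\leq n-2$, $\lambda_{n-1}=p^{-1}$, and $\lambda_n=p$. Since $p$ is not a root of unity, the ratio $\lambda_i\lambda_j^{-1}$ fails to be a root of unity precisely when exactly one of $i,j$ lies in $\{n-1,n\}$, or when $\{i,j\}=\{n-1,n\}$. Lemma \ref{xxlem2.3}(1) therefore forces the coaction matrix $(y_{ij})$ to be block diagonal: an $(n-2)\times(n-2)$ block in the central variables, together with two group-like entries $y_{n-1,n-1}$ and $y_{n,n}$.

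The next step is to deal with the $(n-2)\times(n-2)$ block, about which Lemma \ref{xxlem2.3} gives no information. Let $K_A\subseteq K$ be the Hopf subalgebra generated by $\{y_{ij}\mid i,j\leq n-2\}$. Restricting $\rho$ to the commutative polynomial subalgebra $k[t_1,\ldots,t_{n-2}]\subset A$ produces a coaction valued in $k[t_1,\ldots,t_{n-2}]\otimes K_A$ which is inner faithful because its matrix coefficients generate $K_A$ as a Hopf algebra. Since $H$ is semisimple, so are $K$, $K_A$ and $(K_A)^\circ$. Etingof-Walton \cite[Theorem 1.3]{EW} then gives that $(K_A)^\circ$ is a group algebra, whence $K_A$ is commutative.

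To conclude that $K$ itself is commutative, I would check that $y_{n-1,n-1}$ and $y_{n,n}$ commute with $K_A$ and with each other. Applying $\rho$ to the centrality relations $t_{n-1}t_i=t_it_{n-1}$ and $t_nt_i=t_it_n$ for $i\leq n-2$ and comparing coefficients in the PBW basis of $A_2$ immediately yields the first. For the second, I would apply $\rho$ to $t_nt_{n-1}-pt_{n-1}t_n=\sum_{i=1}^s t_i^2$; after rewriting $t_nt_{n-1}=pt_{n-1}t_n+\sum_i t_i^2$ in $A$, the PBW coefficient of $t_{n-1}t_n$ on the left becomes a nonzero scalar multiple of $y_{n,n}y_{n-1,n-1}-y_{n-1,n-1}y_{n,n}$, while the right side contains only basis elements $t_jt_{j'}$ with $j,j'\leq n-2$. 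Linear independence of $t_{n-1}t_n$ from those forces the commutator to vanish. Thus $K$ is commutative, and since $k$ is algebraically closed of characteristic zero, $H=K^\circ$ is a group algebra.

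The main technical hurdle lies in the second paragraph: one must recognize that although the Nakayama-eigenvalue argument cannot separate the central block, this block governs a coaction on a commutative subring, so Etingof-Walton supplies exactly the information Lemma \ref{xxlem2.3} cannot. The remaining work is a routine PBW coefficient comparison of the kind used throughout Section \ref{xxsec2}.
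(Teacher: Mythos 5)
Your proposal is correct and follows essentially the same route as the paper's own proof: diagonal Nakayama eigenvalues $\{1,\dots,1,p^{-1},p\}$ plus Lemma \ref{xxlem2.3}(1) to kill the off-diagonal entries involving $t_{n-1},t_n$, Etingof--Walton applied to the induced coaction on the central polynomial subalgebra $k[t_1,\dots,t_{n-2}]$ to get commutativity of that block, and then the defining relations to show the two group-like elements commute with everything. The only cosmetic difference is that you spell out the PBW coefficient comparison and the inner-faithfulness of the restricted coaction, which the paper leaves implicit.
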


\begin{proof} The Nakayama automorphism of $A$ is given in \eqref{E1.5.2}
$$\mu_A: t_{n-1}\to p^{-1} t_{n-1}, \quad t_n\to p t_{n}, \quad t_i\to t_i$$
for all $i\leq n-2$. So $\mu_A$ has eigenvalues $\{1,\cdots, 1, p, p^{-1}\}$.
So $\lambda_i\lambda_j^{-1}$ is not a root of unity when
$i\neq j$ and at least one of $(i,j)$ is either $n$ or $n-1$.
By Lemma \ref{xxlem2.3}(1), $y_{ij}=0$ when
$i\neq j$ and at least one of $(i,j)$ is either $n$ or $n-1$.

Let $H$ be a semisimple Hopf algebra acting on $A$ and $K=H^\circ$.
So $K$ and its Hopf subalgebras are semisimple.

Let $K_1$ be the Hopf subalgebra of $K$ generated by
$y_{ij}$ for all $1\leq i,j\leq n-2$. Thus $K_1$ coacts on
$k[t_1,\cdots,t_{n-2}]$. By \cite[Theorem, 1.3]{EW}, $K_1$ is commutative.
Also $y_{nn}$ and $y_{n-1n-1}$ are grouplike elements. Since
$t_n$ commutes with $t_i$ for all $i\leq n-2$, $y_{nn}$
commutes with $K_1$. Similarly, $y_{n-1n-1}$ commutes
with $K_1$. Applying $\rho$ to the relation
$t_{n}t_{n-1}-pt_{n-1}t_n-(\sum_{i=1}^s t_i^2)$, one sees
that $y_{nn}$ commutes with $y_{n-1n-1}$. Since $K$ is
generated by $K_1$, $y_{nn}$ and $y_{n-1n-1}$, $K$
is commutative. Its dual Hopf algebra $H$ is a group algebra.
\end{proof}

Next we prove a part of Theorems \ref{xxthm0.4} and \ref{xxthm0.5}.
We assume Hypothesis \ref{xxhyp2.1}.

\begin{proposition}
\label{xxpro2.8}
Let $A$ be the algebra $A(1)$, namely, the skew polynomial ring
$k_{p_{ij}}[t_1,t_2,t_3]$, see \eqref{E0.3.1}. Let $H$ act on $A$.
\begin{enumerate}
\item[(1)]
If $p_{12}^{-2} p_{23}p_{31}$, $p_{31}^{-2} p_{12}p_{23}$ and
$p_{23}^{-2} p_{31}p_{12}$ are not roots of unity, then $H$ is
a commutative group algebra.
\item[(2)]
Suppose that $H$ is semisimple and the $H$-action has trivial
homological determinant. If
$p_{12}^{-2} p_{23}p_{31}$, $p_{31}^{-2} p_{12}p_{23}$ and
$p_{23}^{-2} p_{31}p_{12}$ are not 1, then $H$ is a commutative group algebra.
\end{enumerate}
\end{proposition}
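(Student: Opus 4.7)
The plan is to read off the eigenvalues of $\mu_{A(1)}$ from \eqref{E1.5.1} and then invoke Lemma \ref{xxlem2.3}. By \eqref{E1.5.1}, the Nakayama automorphism of $A = k_{p_{ij}}[t_1, t_2, t_3]$ is diagonal on $V = A_1$ with basis $\{t_1, t_2, t_3\}$ and eigenvalues
$$\lambda_1 = p_{21} p_{31}, \quad \lambda_2 = p_{12} p_{32}, \quad \lambda_3 = p_{13} p_{23}.$$
Using the convention $p_{ji} = p_{ij}^{-1}$, I would compute the three ratios and obtain
$$\lambda_1 \lambda_2^{-1} = p_{12}^{-2} p_{23} p_{31}, \quad \lambda_2 \lambda_3^{-1} = p_{23}^{-2} p_{31} p_{12}, \quad \lambda_3 \lambda_1^{-1} = p_{31}^{-2} p_{12} p_{23},$$
which are exactly the three quantities named in the hypothesis of the proposition.

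For part (1), the assumption that none of these ratios is a root of unity places us in the setting of Lemma \ref{xxlem2.3}(2), so $K$ is a group algebra (and hence $H$ and $K$ are semisimple). The defining relations $t_j t_i = p_{ij} t_i t_j$ with $p_{ij} \in k^\times$ supply the extra hypothesis of Lemma \ref{xxlem2.3}(3), which strengthens the conclusion to: $H$ and $K$ are commutative group algebras.

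For part (2), the semisimplicity of $H$ together with trivial homological determinant gives $S^2 = Id$ and $D = 1_K$, placing us in the setting of parts (4)--(6) of Lemma \ref{xxlem2.3}. The assumption that $\lambda_i \lambda_j^{-1} \neq 1$ for all $i \neq j$ matches Lemma \ref{xxlem2.3}(5), and the skew-commutation relations again feed into Lemma \ref{xxlem2.3}(6), yielding that $H$ is a commutative group algebra.

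The only substantive step is the brief arithmetic identifying the ratios $\lambda_i \lambda_j^{-1}$ with the three scalars in the hypothesis; after that the proposition is a direct application of Lemma \ref{xxlem2.3}, so I do not anticipate any genuine obstacle beyond confirming that the eigenvalue conditions of the lemma translate correctly into the stated non-root-of-unity (resp.\ non-trivial) conditions on the parameters $p_{ij}$.
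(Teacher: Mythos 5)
Your proposal is correct and follows exactly the same route as the paper: read the diagonal eigenvalues of $\mu_{A(1)}$ from \eqref{E1.5.1}, identify the three ratios $\lambda_i\lambda_j^{-1}$ with the scalars in the hypothesis, and apply Lemma \ref{xxlem2.3}(3) for part (1) and Lemma \ref{xxlem2.3}(6) for part (2). The arithmetic with the convention $p_{ji}=p_{ij}^{-1}$ checks out, and the observation that the three stated quantities cover all ordered pairs $i\neq j$ (since inverses of roots of unity are roots of unity) is immediate.
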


\begin{proof} By \eqref{E1.5.1}, the Nakayama automorphism of $A$ is
$$\mu_A: t_1\to \lambda_1 t_1, t_2\to \lambda_2 t_2, t_3\to \lambda_3 t_3$$
where $\lambda_1=p_{21} p_{31}$, $\lambda_2=p_{12}p_{32}$ and $\lambda_3=
p_{13}p_{23}$. So $\lambda_1 \lambda_2^{-1}=p_{12}^{-2} p_{23}p_{31}$,
$\lambda_1^{-1} \lambda_3=p_{31}^{-2} p_{12}p_{23}$ and $\lambda_2 \lambda_3^{-1}=
p_{23}^{-2} p_{31}p_{12}$.

(1) Under the hypothesis of (1), $\lambda_i \lambda_j^{-1}$ are not
roots of unity for all $i\neq j$. The assertion follows
from Lemma \ref{xxlem2.3}(3).

(2) Under the hypothesis of (2), $\lambda_i \lambda_j^{-1}$ are not
1 for all $i\neq j$. The assertion follows from Lemma \ref{xxlem2.3}(6).
\end{proof}

A special case of Proposition \ref{xxpro2.8} is when $(p_{12},p_{13},p_{23})=
(1,1,p)$. If $p$ is not a root of unity, then the hypothesis about
$p_{ij}$ in Proposition \ref{xxpro2.8}(1) holds. So any Hopf action is a
group action. If $p\neq \pm 1, \pm i$, then the hypothesis about
$p_{ij}$ in Proposition \ref{xxpro2.8}(2) holds. So any Hopf action is
a group action (assuming $H$ is semisimple and $H$-action has trivial
homological determinant).

\begin{proposition}
\label{xxpro2.9}
Let $A$ be the algebra $A(2)$, see \eqref{E0.3.2}.
Let $H$ act on $A$.
\begin{enumerate}
\item[(1)]
If $p$ is not a root of unity, then $H$ is a commutative group algebra.
\item[(2)]
Suppose $H$ is semisimple and the $H$-action has trivial
homological determinant. If $p\neq \pm 1$, then $H$ is a commutative
group algebra.
\end{enumerate}
\end{proposition}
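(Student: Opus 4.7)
The plan is to apply Lemma \ref{xxlem2.3} to $A(2)$, whose Nakayama automorphism was computed in \eqref{E1.5.2} as $\mu_A(t_1)=t_1$, $\mu_A(t_2)=p^{-1}t_2$, $\mu_A(t_3)=pt_3$. Thus $\mu_A$ is diagonal on the basis $\{t_1,t_2,t_3\}$ with eigenvalues $\lambda_1=1,\ \lambda_2=p^{-1},\ \lambda_3=p$, and the off-diagonal ratios $\lambda_i\lambda_j^{-1}$ with $i\neq j$ are precisely $p^{\pm 1}$ and $p^{\pm 2}$.

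For part (1), when $p$ is not a root of unity, none of $p^{\pm1},p^{\pm2}$ is a root of unity, so Lemma \ref{xxlem2.3}(1) yields $y_{ij}=0$ for all $i\neq j$. Hence $\rho(t_i)=t_i\otimes y_{ii}$ and each $y_{ii}$ is a group-like element generating $K$ as a Hopf algebra; in particular $K$ is a group algebra. However Lemma \ref{xxlem2.3}(3) does not apply directly, since the defining relations of $A(2)$ are not purely skew-commutation. I would therefore apply $\rho$ to each relation separately: the two commutation relations $t_1t_i=t_it_1$ ($i=2,3$) immediately give $y_{11}y_{ii}=y_{ii}y_{11}$. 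For the third relation, $\rho(t_3t_2-pt_2t_3-t_1^2)=0$ expands to
\begin{equation*}
t_3t_2\otimes y_{33}y_{22}-pt_2t_3\otimes y_{22}y_{33}-t_1^2\otimes y_{11}^2=0.
\end{equation*}
Using $t_3t_2=pt_2t_3+t_1^2$ to rewrite the first term, this becomes
\begin{equation*}
pt_2t_3\otimes(y_{33}y_{22}-y_{22}y_{33})+t_1^2\otimes(y_{33}y_{22}-y_{11}^2)=0.
\end{equation*}
Since $\{t_2t_3,t_1^2\}$ is part of a PBW basis of $A_2$ (standard for the three-dimensional AS regular algebra $A(2)$), the two tensor factors are linearly independent, forcing $y_{22}y_{33}=y_{33}y_{22}$ and $y_{11}^2=y_{22}y_{33}$. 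Combined with the earlier commutations, this shows the three generators of $K$ mutually commute, so $K$ is a commutative group algebra, and by \cite[Theorem 2.3.1]{Mo} so is $H=K^\circ$.

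For part (2), with $H$ semisimple and the $H$-action having trivial homological determinant, Lemma \ref{xxlem2.3}(4) only requires $\lambda_i\lambda_j^{-1}\neq 1$ for $i\neq j$; this holds precisely when $p^{\pm 1}\neq 1$ and $p^{\pm 2}\neq 1$, equivalently $p\neq\pm 1$. Thus again $y_{ij}=0$ for $i\neq j$, and the identical calculation on the three defining relations shows that $K$ is a commutative group algebra.

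The core of the argument is essentially bookkeeping, so there is no serious obstacle. The one non-routine point is the third relation, where the presence of $t_1^2$ prevents an immediate application of Lemma \ref{xxlem2.3}(3) or (6); I must invoke linear independence in $A_2$ to separate the two summands. Everything else is a direct specialization of Lemma \ref{xxlem2.3} to the eigenvalue data extracted from \eqref{E1.5.2}.
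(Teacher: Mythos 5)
Your proof is correct and follows the same route as the paper: extract the eigenvalues $\lambda_1=1$, $\lambda_2=p^{-1}$, $\lambda_3=p$ from \eqref{E1.5.2}, note that the ratios $\lambda_i\lambda_j^{-1}$ are $p^{\pm1},p^{\pm2}$, and feed this into Lemma \ref{xxlem2.3}. The one place you diverge is actually a point where you are more careful than the paper: the paper simply cites Lemma \ref{xxlem2.3}(3) (resp.\ (6)) to get commutativity, even though the hypothesis $v_jv_i=p_{ij}v_iv_j$ of those parts is not literally satisfied for $A(2)$ because of the $t_1^2$ term in the relation $t_3t_2-pt_2t_3-t_1^2=0$. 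Your direct application of $\rho$ to that relation, rewriting $t_3t_2=pt_2t_3+t_1^2$ and using the linear independence of $t_2t_3$ and $t_1^2$ in $A_2$ to extract $y_{22}y_{33}=y_{33}y_{22}$ (and the bonus relation $y_{11}^2=y_{22}y_{33}$), is exactly the computation needed to make the cited lemma's conclusion go through; it costs only a few lines and closes a small gap in the paper's citation.
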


\begin{proof} By \eqref{E1.5.2} (taking $n=3$), the Nakayama automorphism of $A$ is
$$\mu_A: t_1\to \lambda_1 t_1, t_2\to \lambda_2 t_2, t_3\to \lambda_3 t_3$$
where $\lambda_1=1$, $\lambda_2=p^{-1}$ and $\lambda_3=p$. So $\lambda_1 \lambda_2^{-1}=
p$,  $\lambda_1^{-1} \lambda_3=p$ and $\lambda_2 \lambda_3^{-1}=p^{-2}$.

(1) Under the hypothesis of (1), $\lambda_i \lambda_j^{-1}$ are not
roots of unity for all $i\neq j$. The assertion follows
from Lemma \ref{xxlem2.3}(3).

(2) Under the hypothesis of (2), $\lambda_i \lambda_j^{-1}$ are not
1 for all $i\neq j$. The assertion follows from Lemma \ref{xxlem2.3}(6).
\end{proof}

The next example shows that if $p$ is a root of unity, there are Hopf algebra
(and non-semisimple Hopf algebra) actions.

\begin{example}
\label{xxex2.10}
There are many (non-group) Hopf actions on $A(2)$ when $p$ is a
root of unity.

Let $T$ be any finite dimensional Hopf algebra
that acts on the skew polynomial ring $k_{p}[x_1,x_2]$
satisfying Hypothesis \ref{xxhyp2.1} and with trivial homological
determinant. Such $T$-actions are
classified in \cite{CKWZ}. In particular, there are many
semisimple Hopf algebras $T$ which are not group algebras
if $p=-1$ and there are many non-semisimple Hopf algebras
$T$ which are not group algebras if $p\neq \pm 1$.

Such an $T$ acts on $A(2)$. Suppose
$$\begin{aligned}
h \cdot x_1& = f_{11}(h) x_1 +f_{12}(h) x_2,\\
h \cdot x_2& = f_{21}(h) x_1 +f_{22}(h) x_2
\end{aligned}
$$
for all $h\in T$ with some $k$-linear maps $f_{ij}: T\to k$. Then
we define
$$\begin{aligned}
h \cdot t_1& =\epsilon(h) t_1,\\
h \cdot t_2& = f_{11}(h) t_2 +f_{12}(h) t_3,\\
h \cdot t_3& = f_{21}(h) t_2 +f_{22}(h) t_3
\end{aligned}
$$
for all $h\in T$. It is easy to check that this is a well defined
$T$-action on $A(2)$ satisfying Hypothesis \ref{xxhyp2.1} with
trivial homological determinant.
\end{example}

Here is a general result dealing with the AS regular algebras generated
by two elements.

\begin{proposition}
\label{xxpro2.10} Let $A$ be a noetherian AS regular algebra
generated by two elements in degree one, say by $x$ and $y$.
Suppose that $\mu_A$ maps $x$ to $\lambda_1 x$ and $y$
to $\lambda_2 y$ such that $\lambda_1^{-1}\lambda_2$
is not a root of unity. Assume further that $A$ has a relation
of the form
$$r:=w_1 m_1 xy m_2+ w_2 m_1 yx m_2+w_3 z+ \cdots=0$$
where $w_1,w_2,w_3\in k^\times$ and $m_1,m_2, z$ are monomials. Assume
that $\{m_1 yx m_2, z, \cdots\}$ are $k$-linearly independent in $A$.
Then any Hopf algebra $H$ which acts on $A$ is a commutative group algebra.
\end{proposition}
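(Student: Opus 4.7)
The plan is to apply the structure results of Lemma \ref{xxlem2.3} to reduce the coaction on the degree-one part to a ``diagonal'' shape, and then feed the given relation $r=0$ into the coaction to force the two grouplikes that appear to commute.

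First, since $\lambda_1\lambda_2^{-1}$ is not a root of unity, Lemma \ref{xxlem2.3}(1) immediately gives $y_{12}=y_{21}=0$. Hence the right $K$-coaction $\rho\colon A\to A\otimes K$ satisfies
\[
\rho(x)=x\otimes y_{11},\qquad \rho(y)=y\otimes y_{22},
\]
and $y_{11},y_{22}$ are grouplike (so invertible) elements of $K$ that generate $K$ by the inner-faithfulness part of Hypothesis \ref{xxhyp2.1}. In particular $K$ is already a group algebra, and it remains only to prove that $K$ is commutative. Write $a=y_{11}$, $b=y_{22}$. For any monomial $m$ in $x,y$, let $\pi(m)$ be the word in $a,b$ obtained by replacing each occurrence of $x$ by $a$ and each $y$ by $b$ in the same order; then $\rho(m)=m\otimes\pi(m)$.

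Now apply $\rho$ to the relation $r=0$. Using the computation above,
\[
0=\rho(r)=w_1\,m_1xy\,m_2\otimes \pi(m_1)ab\,\pi(m_2)+w_2\,m_1yx\,m_2\otimes \pi(m_1)ba\,\pi(m_2)+w_3\,z\otimes\pi(z)+\cdots.
\]
The original relation $r=0$ in $A$ lets us solve for $m_1xy\,m_2=-w_1^{-1}\bigl(w_2\,m_1yx\,m_2+w_3\,z+\cdots\bigr)$. Substituting this into the first tensor factor and collecting, one finds
\[
w_2\,m_1yx\,m_2\otimes\bigl(\pi(m_1)ba\,\pi(m_2)-\pi(m_1)ab\,\pi(m_2)\bigr)+w_3\,z\otimes\bigl(\pi(z)-\pi(m_1)ab\,\pi(m_2)\bigr)+\cdots=0.
\]
By hypothesis $\{m_1yx\,m_2,\,z,\,\dots\}$ is $k$-linearly independent in $A$, so each coefficient in $K$ must vanish. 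The vanishing of the coefficient of $m_1yx\,m_2$ (together with $w_2\neq 0$ and the invertibility of the grouplikes $\pi(m_1),\pi(m_2)$) yields $ab=ba$, i.e.\ $y_{11}y_{22}=y_{22}y_{11}$. Thus $K$ is a commutative group algebra, and by \cite[Theorem 2.3.1]{Mo} $H=K^\circ$ is a commutative group algebra as well.

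The only nonroutine step is the bookkeeping that turns $\rho(r)=0$ into a commutation relation between $y_{11}$ and $y_{22}$; here the linear-independence hypothesis on $\{m_1yx\,m_2,z,\ldots\}$ is essential, as it is exactly what prevents the grouplikes $ab$ and $ba$ from being absorbed into an identity coming from other terms of $r$. Everything else follows the template established by Lemma \ref{xxlem2.3} and Propositions \ref{xxpro2.8}--\ref{xxpro2.9}.
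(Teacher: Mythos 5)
Your proposal is correct and follows essentially the same route as the paper's proof: reduce to a diagonal coaction via Lemma \ref{xxlem2.3}, so that $K$ is a group algebra generated by the grouplikes $y_{11},y_{22}$, then apply $\rho$ to the relation $r=0$, substitute for $m_1xym_2$ using the relation itself, and invoke the linear independence of $\{m_1yxm_2,z,\dots\}$ to extract $y_{11}y_{22}=y_{22}y_{11}$. The only differences are cosmetic (the paper normalizes $w_1=-1$ rather than dividing by it, and you make the ``each coefficient vanishes'' step explicit).
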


\begin{proof} We may assume that $w_1=-1$.
Suppose $\rho: A\to A\otimes K$ is the
correspondence coaction. By Lemma \ref{xxlem2.3}(2), $K$ is
a group algebra and $\rho(x)=x\otimes y_{11}$ and
$\rho(y)=y\otimes y_{22}$
for some group-like elements $y_{11}$ and $y_{22}$
in $K$. Write $m_1=x^{a_{1}}y^{a_2}\cdots y^{a_s}$,
$m_2=x^{b_1}y^{b_2}\cdots y^{b_t}$ and
$z=x^{c_{1}}y^{c_2}\cdots y^{c_u}$ for the monomials appearing
in $r$. Applying $\rho$ to $0=r$, we have
$$\begin{aligned}
0&=\rho(r)=\rho(-m_1 xy m_2+ w_2 m_1 yx m_2+ w_3 z \cdots)\\
&=-m_1xym_2\otimes (y_{11}^{a_1}y_{22}^{a_2}\cdots y_{22}^{a_s} y_{11}y_{22}
y_{11}^{b_1}y_{22}^{b_2}\cdots y_{22}^{b_t})\\
&\qquad
+w_2 m_1yxm_2\otimes (y_{11}^{a_1}y_{22}^{a_2}\cdots y_{22}^{a_s} y_{22}y_{11}
y_{11}^{b_1}y_{22}^{b_2}\cdots y_{22}^{b_t})+\cdots\\
&=(w_2 m_1yxm_2+\cdots)
\otimes (y_{11}^{a_1}y_{22}^{a_2}\cdots y_{22}^{a_s} y_{11}y_{22}
y_{11}^{b_1}y_{22}^{b_2}\cdots y_{22}^{b_t})\\
&\qquad
+w_2 m_1yxm_2\otimes (y_{11}^{a_1}y_{22}^{a_2}\cdots y_{22}^{a_s} y_{22}y_{11}
y_{11}^{b_1}y_{22}^{b_2}\cdots y_{22}^{b_t})+\cdots\\
&= w_2 m_1yxm_2\otimes (y_{11}^{a_1}y_{22}^{a_2}\cdots y_{22}^{a_s}
(y_{22}y_{11}-y_{11}y_{22})
y_{11}^{b_1}y_{22}^{b_2}\cdots y_{22}^{b_t})+\cdots
\end{aligned}
$$
which implies that $y_{11}y_{22}=y_{22}y_{11}$. So $K$ is commutative
and cocommutative. Since we assume that $k$ is algebraically
closed of characteristic zero, $K$ (and then $H$) is a commutative
group algebra.
\end{proof}

\begin{remark}
\label{xxrem2.11}
By the proof of Proposition \ref{xxpro2.10}, the hypothesis of
``$\lambda_i \lambda_j^{-1}$ not being a root of unity'' can be
replaced by the hypothesis ``that $K$ is a group algebra with
$\rho(x)=x\otimes y_{11}$ and $\rho(y)=y\otimes y_{22}$''.
\end{remark}

\begin{proposition}
\label{xxpro2.12}
Let $A$ be the down-up algebra algebra $A(6)=A(\alpha, \beta)$, see \eqref{E0.3.6}.
Let $H$ act on $A$ and $K=H^\circ$.
\begin{enumerate}
\item[(1)]
If $\beta$ is not a root of unity, then $K$ is a
group algebra.
\item[(2)]
If $\alpha\neq 0$ and $\beta$ is not a root of unity, then $H$ is a commutative
group algebra.
\item[(3)]
Suppose $H$ is semisimple and the $H$-action has trivial
homological determinant. If $\beta\neq \pm 1$, then $K$ is a
group algebra. If further $\alpha\neq 0$, then $H$ is a commutative
group algebra.
\end{enumerate}
\end{proposition}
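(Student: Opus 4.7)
The plan is to reduce all three parts to Lemma \ref{xxlem2.3} and Proposition \ref{xxpro2.10}, using the explicit formula \eqref{E1.5.6} for the Nakayama automorphism of $A(6)$. By \eqref{E1.5.6}, the Nakayama automorphism $\mu_{A(6)}$ is diagonal in the basis $\{x,y\}$ with eigenvalues $\lambda_1=-\beta$ and $\lambda_2=-\beta^{-1}$. Consequently $\lambda_1\lambda_2^{-1}=\beta^{2}$ and $\lambda_2\lambda_1^{-1}=\beta^{-2}$, which is the single ratio that drives everything.

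For (1), if $\beta$ is not a root of unity then $\beta^{2}$ is not a root of unity either. Lemma \ref{xxlem2.3}(1) then forces $y_{12}=y_{21}=0$, so $\rho(x)=x\otimes y_{11}$ and $\rho(y)=y\otimes y_{22}$, with $y_{11},y_{22}$ group-like. Because the $K$-coaction is inner faithful, $K$ is generated by $\{y_{11},y_{22}\}$, hence $K$ is a group algebra (this is precisely Lemma \ref{xxlem2.3}(2) applied in the $n=2$ setting).

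For (2), I bring in the first defining relation
\begin{equation*}
r:=x^{2}y-\alpha\,xyx-\beta\,yx^{2}=0.
\end{equation*}
Writing this as $r = 1\cdot (x)\,xy\,(1) + (-\alpha)\,(x)\,yx\,(1) + (-\beta)\, yx^{2}$, we are in the shape required by Proposition \ref{xxpro2.10} with $m_{1}=x$, $m_{2}=1$, $z=yx^{2}$, $w_{1}=1$, $w_{2}=-\alpha$, $w_{3}=-\beta$. The hypothesis $\alpha\neq 0$ guarantees $w_{2}\in k^{\times}$, and $\{xyx,\,yx^{2}\}$ are $k$-linearly independent (they sit in different positions of a PBW-type basis for the down-up algebra, and in fact $r$ is the only relation in this multidegree, so the coefficient of $xyx$ and of $yx^{2}$ in any nontrivial relation is forced by $r$). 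The ratio $\lambda_{1}^{-1}\lambda_{2}=\beta^{-2}$ is not a root of unity, so Proposition \ref{xxpro2.10} applies and concludes that $H$ is a commutative group algebra.

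For (3), $H$ semisimple with trivial homological determinant means $S^{2}=\mathrm{id}$ and $D=1_{K}$, so $\eta_{\mu_{A}^{\tau}}=\mathrm{id}$ by Theorem \ref{xxthm2.2}. Part (4) of Lemma \ref{xxlem2.3} now only requires $\lambda_{1}\lambda_{2}^{-1}=\beta^{2}\neq 1$, i.e. $\beta\neq\pm 1$, to force $y_{12}=y_{21}=0$; as in (1) this yields that $K$ is a group algebra. When additionally $\alpha\neq 0$, one repeats the argument of (2), invoking Remark \ref{xxrem2.11}: the conclusion of Proposition \ref{xxpro2.10} only needed that $K$ be a group algebra with $\rho(x)=x\otimes y_{11}$ and $\rho(y)=y\otimes y_{22}$, which we have just established. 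The commutator computation from the relation $r$ then forces $y_{11}y_{22}=y_{22}y_{11}$, so $K$ is commutative and $H$ is a commutative group algebra. The main (and only) subtlety I anticipate is verifying the linear independence claim $xyx\not\equiv c\,yx^{2}$ in $A(6)$ so that Proposition \ref{xxpro2.10} genuinely applies; this follows from the known Koszul-type basis of the down-up algebra, where $x^{2}y$, $xyx$, $yx^{2}$ are three distinct basis monomials and $r$ is the unique relation tying them together.
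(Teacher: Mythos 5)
Your proposal is correct and follows essentially the same route as the paper: \eqref{E1.5.6} gives $\lambda_1\lambda_2^{-1}=\beta^2$, parts (1) and the first half of (3) are Lemma \ref{xxlem2.3}(2) and (5) respectively, and the $\alpha\neq 0$ cases are handled by Proposition \ref{xxpro2.10} and Remark \ref{xxrem2.11} applied to the relation $x^2y-\alpha xyx-\beta yx^2=0$. The only difference is that you spell out the identification $m_1=x$, $m_2=1$, $z=yx^2$ and verify the linear independence of $xyx$ and $yx^2$ (which indeed holds, since the multidegree-$(2,1)$ component of $A(6)$ is $2$-dimensional), details the paper leaves implicit.
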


\begin{proof} By \eqref{E1.5.6},  the Nakayama automorphism of $A$ is
$$\mu_A: x\to -\beta x, y\to -\beta^{-1} y$$
So $\lambda_1 \lambda_2^{-1}=\beta^2$.

(1,2) Under the hypothesis of (1), $\lambda_1 \lambda_2^{-1}$ is not
a root of unity.  By Lemma \ref{xxlem2.3}(2), $K$ is a group algebra.
So part (1) follows. In part (2), we further assume that $\alpha\neq 0$,
then Proposition \ref{xxpro2.10} applies. So $H$ is a commutative
group algebra.

(3) Under the hypothesis of (3), $\lambda_1 \lambda_2^{-1}$ is not
$1$. By Lemma \ref{xxlem2.3}(5), $K$ is a group algebra.
When $\alpha\neq 0$, the assertion follows from Remark \ref{xxrem2.11}.
\end{proof}

We will state a few more results. The proofs are very similar to the
proof of Proposition \ref{xxpro2.12}, so we decide to omit them.

\begin{proposition}
\label{xxpro2.13}
Let $A$ be the algebra $A(7)=S(p)$, see \eqref{E0.3.7}.
Let $H$ act on $A$ and $K=H^\circ$.
\begin{enumerate}
\item[(1)]
If $p$ is not a root of unity, then $K$ is a group algebra.
\item[(2)]
Suppose $H$ is semisimple and the $H$-action has trivial
homological determinant. If $p\neq \pm i$, then $K$ is a
group algebra.
\end{enumerate}
\end{proposition}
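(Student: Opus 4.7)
The plan is to run the same strategy used in Proposition \ref{xxpro2.12}, now for $A(7) = S(p)$. The starting point is formula \eqref{E1.5.7}, which tells us that $\mu_{A(7)}$ is already diagonal in the basis $\{x,y\}$ with eigenvalues $\lambda_1 = p$ and $\lambda_2 = -p^{-1}$. Thus the key ratio is
\[
\lambda_1 \lambda_2^{-1} = -p^2,
\]
and the entire argument reduces to analyzing when this quantity is, or is not, a root of unity.

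For part (1), I would observe that if $p \in k^\times$ is not a root of unity, then neither is $-p^2$: any relation $(-p^2)^N = 1$ would force $p^{4N} = 1$. Hence $\lambda_1 \lambda_2^{-1}$ is not a root of unity, and Lemma \ref{xxlem2.3}(2) immediately gives that $K$ is a group algebra.

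For part (2), under the standing assumption that $H$ is semisimple with trivial homological determinant, Lemma \ref{xxlem2.3}(5) only requires $\lambda_1 \lambda_2^{-1} \neq 1$, that is, $-p^2 \neq 1$, equivalently $p \neq \pm i$; so the stated hypothesis is exactly what is needed to conclude that $K$ is a group algebra.

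The main thing to double-check, and the only real obstacle, is the translation between ``$p$ not a root of unity'' and ``$-p^2$ not a root of unity'' in part (1); once that elementary observation is recorded, both parts are immediate applications of Lemma \ref{xxlem2.3}. Note that unlike Proposition \ref{xxpro2.12}, here we do not claim $H$ is a commutative group algebra, so there is no need to invoke Proposition \ref{xxpro2.10} or the defining relations \eqref{E0.3.7} beyond what is already packaged into the Nakayama automorphism.
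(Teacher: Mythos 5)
Your proposal is correct and is essentially the argument the paper intends: the paper explicitly omits the proof of Proposition \ref{xxpro2.13}, stating it is "very similar to the proof of Proposition \ref{xxpro2.12}," and your computation of $\lambda_1\lambda_2^{-1}=-p^2$ from \eqref{E1.5.7} followed by Lemma \ref{xxlem2.3}(2) and (5) is exactly that argument. The elementary check that $-p^2$ is not a root of unity when $p$ is not, and that $-p^2\neq 1$ iff $p\neq\pm i$, is also right.
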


\begin{proposition}
\label{xxpro2.14}
Let $A$ be the algebra ${\bf D}$ with parameter $(p,q)$, see the end of
Section \ref{xxsec1}. Let $H$ act on $A$.
\begin{enumerate}
\item[(1)]
If $p^{-3} q^4$ is not a root of unity, then $H$ is a commutative
group algebra.
\item[(2)]
Suppose $H$ is semisimple and the $H$-action has trivial
homological determinant. If $p^{-3}q^{4}\neq \pm 1$, then $H$ is a commutative
group algebra.
\end{enumerate}
\end{proposition}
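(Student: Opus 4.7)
My plan is to mirror the proof of Proposition \ref{xxpro2.12} and apply Proposition \ref{xxpro2.10} to one of the defining relations of $\mathbf{D}$. By the formula \eqref{E1.5.8}, the Nakayama automorphism of $\mathbf{D}$ acts diagonally on the degree-one basis $\{x,y\}$ with eigenvalues $\lambda_1=p^{-3}q^4$ and $\lambda_2=p^3q^{-4}$, so
$$\lambda_1\lambda_2^{-1}\;=\;p^{-6}q^{8}\;=\;(p^{-3}q^{4})^{2}.$$
Thus part (1) reduces to the case ``$\lambda_1\lambda_2^{-1}$ is not a root of unity'' (since a square of a non-root-of-unity is again not a root of unity), and part (2), under the assumption $p^{-3}q^4\neq\pm 1$, reduces to ``$\lambda_1\lambda_2^{-1}\neq 1$.''

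With these reductions in hand, Lemma \ref{xxlem2.3}(2) (for part (1)) and Lemma \ref{xxlem2.3}(5) (for part (2), where the semisimplicity and trivial homological determinant hypotheses guarantee $S^2=\mathrm{id}$ and $D=1_K$) give that $K=H^\circ$ is a group algebra, with $\rho(x)=x\otimes g$ and $\rho(y)=y\otimes h$ for group-like elements $g,h\in K$. It remains to show $gh=hg$, which will force $K$ (hence $H$) to be a commutative group algebra over our algebraically closed base field of characteristic zero.

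To produce the commutation relation, I would apply $\rho$ to the defining relation
$$r_{D1}\;=\;x^2\!\cdot\!(xy)\!\cdot\!1\;+\;p\,x^2\!\cdot\!(yx)\!\cdot\!1\;+\;q\,xyx^{2}\;-\;p(2p^{2}+q)\,yx^{3}\;=\;0,$$
which is exactly of the shape required by Proposition \ref{xxpro2.10} with $m_1=x^2$, $m_2=1$, $w_1=1$, $w_2=p$, $w_3=q$, $z=xyx^{2}$. Since $g$ and $h$ are group-like, applying $\rho$ and using $r_{D1}$ to eliminate $x^{3}y$ leaves a single-tensor identity whose left factors are $x^{2}yx$, $xyx^{2}$, $yx^{3}$. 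Matching the coefficient of $x^{2}yx$ gives $g^2hg=g^{3}h$, i.e.\ $hg=gh$, as required.

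The one technical point is verifying that $\{x^{2}yx,\,xyx^{2},\,yx^{3}\}$ is $k$-linearly independent in $\mathbf{D}_{4}$, which is the hypothesis needed to invoke Proposition \ref{xxpro2.10}. I expect this to be the main (if routine) obstacle: one inspects the degree-$4$ component of the defining ideal and observes that $r_{D2}$ and $r_{D3}$ contribute no relation among monomials of $x$-degree $3$ and $y$-degree $1$, so the only relation on this four-dimensional monomial set comes from $r_{D1}$ itself, which expresses $x^{3}y$ in terms of the other three. Hence these three monomials remain independent in $\mathbf{D}$, the argument closes, and the result is proved; alternatively, one may circumvent the explicit independence check via Remark \ref{xxrem2.11} by working with $r_{D3}$ instead and carrying out the same routine computation on the monomials $\{yxy^{2},\,y^{2}xy,\,y^{3}x\}$.
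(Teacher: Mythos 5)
Your proof is correct and follows exactly the route the paper intends: the paper omits the proof of Proposition \ref{xxpro2.14}, stating only that it is ``very similar to the proof of Proposition \ref{xxpro2.12}'', i.e.\ compute $\lambda_1\lambda_2^{-1}=p^{-6}q^8=(p^{-3}q^4)^2$ from \eqref{E1.5.8}, invoke Lemma \ref{xxlem2.3}(2) or (5) to get that $K$ is a group algebra, and then use Proposition \ref{xxpro2.10} (or Remark \ref{xxrem2.11} in the semisimple case) on a defining relation to force commutativity. Your write-up supplies precisely these omitted details, including the one point the paper leaves implicit --- the $k$-linear independence of $\{x^2yx,\,xyx^2,\,yx^3\}$ in degree $4$, which holds since $r_{D2}$ and $r_{D3}$ live in different $\mathbb{Z}^2$-degrees --- so nothing further is needed.
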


\begin{proposition}
\label{xxpro2.15}
Let $A$ be the algebra ${\bf G}$, see the end of
Section \ref{xxsec1}. Let $H$ act on $A$.
\begin{enumerate}
\item[(1)]
If $g$ is not a root of unity, then $H$ is a commutative
group algebra.
\item[(2)]
Suppose $H$ is semisimple and the $H$-action has trivial
homological determinant. If $q\neq \pm 1$, then $H$ is a commutative
group algebra.
\end{enumerate}
\end{proposition}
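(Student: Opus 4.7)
The plan is to mimic the proof of Proposition \ref{xxpro2.12} for the down-up algebra, exploiting the fact that the Nakayama automorphism of $\mathbf{G}$, recorded in \eqref{E1.5.9}, is diagonal on the degree-one generators with eigenvalues $\lambda_1=g$ and $\lambda_2=g^{-1}$. Thus the single relevant ratio is $\lambda_1\lambda_2^{-1}=g^2$, and both hypotheses of the proposition translate into statements about $g^2$.

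For part~(1), observe that if $g$ is not a root of unity then neither is $g^2$. Lemma \ref{xxlem2.3}(2) then forces $K=H^\circ$ to be a group algebra, with $\rho(x)=x\otimes y_{11}$ and $\rho(y)=y\otimes y_{22}$ for grouplike elements $y_{11},y_{22}\in K$. To upgrade this to commutativity of $H$, I intend to invoke Proposition \ref{xxpro2.10} using the relation
\begin{equation*}
r_{G1}=x^3y+px^2yx+qxyx^2+syx^3=0,
\end{equation*}
which fits the required pattern with $m_1=x^2$, $m_2=1$, $w_1=1$, $w_2=p$, $w_3=q$, $z=xyx^2$ and a trailing term $syx^3$. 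Conjugating $\rho$ through $r_{G1}$ will produce a relation of the form $m_1 yx m_2\otimes y_{11}^2y_{22}(y_{22}y_{11}-y_{11}y_{22})+\cdots=0$, forcing $y_{11}$ and $y_{22}$ to commute and hence $K$, and then $H$, to be a commutative group algebra.

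For part~(2), the hypothesis $g\neq\pm 1$ (reading the stated $q\neq\pm 1$ as a typo for $g\neq\pm 1$, consistent with (0.5.9) and \eqref{E1.5.9}) gives $g^2\neq 1$. Because $H$ is semisimple with trivial homological determinant, Lemma \ref{xxlem2.3}(5) applies to show $K$ is a group algebra with $\rho(x)=x\otimes y_{11}$, $\rho(y)=y\otimes y_{22}$. Commutativity is then obtained by the version of Proposition \ref{xxpro2.10} pointed out in Remark \ref{xxrem2.11}, applied once again to the relation $r_{G1}$.

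The only genuinely technical point is checking the hypothesis of Proposition \ref{xxpro2.10} that the monomials $\{m_1 yx m_2,\,z,\ldots\}=\{x^2yx,\,xyx^2,\,yx^3\}$ are $k$-linearly independent in $\mathbf{G}$; this is where the main obstacle lies, since one must rule out further degree-four reductions beyond $r_{G1}, r_{G2}, r_{G3}$. I would handle this by a short Gr\"obner-basis calculation using the normal-form structure of the noetherian AS regular algebra $\mathbf{G}$ from \cite{LWW}: the three cubic relations yield a reduction system in which $x^3y$, $x^2y^2$ and $xy^3$ are the leading monomials, so the three candidate monomials $x^2yx$, $xyx^2$, $yx^3$ are all in normal form and hence independent. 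With that in hand the two-generator framework of Proposition \ref{xxpro2.10} applies verbatim and yields the conclusion, exactly parallel to Proposition \ref{xxpro2.12}, so the proof can legitimately be omitted as indicated in the text preceding Proposition \ref{xxpro2.13}.
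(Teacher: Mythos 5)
Your proof is correct and follows exactly the route the paper intends: the paper omits the proof of Proposition \ref{xxpro2.15}, saying only that it is ``very similar to the proof of Proposition \ref{xxpro2.12}'', and your argument --- diagonal Nakayama automorphism \eqref{E1.5.9} with the single ratio $\lambda_1\lambda_2^{-1}=g^2$, Lemma \ref{xxlem2.3}(2) (resp.\ (5)) to make $K$ a group algebra, then Proposition \ref{xxpro2.10} (resp.\ Remark \ref{xxrem2.11}) applied to $r_{G1}$ --- is precisely that proof, including the correct reading of the hypothesis in part (2) as $g\neq\pm1$, consistent with (0.5.9). Two cosmetic remarks: the relations $r_{Gi}$ are quartic, not cubic, and the linear independence of $\{x^2yx,\,xyx^2,\,yx^3\}$ needs no Gr\"obner-basis computation, since the degree-four component of the relation ideal is exactly the $k$-span of $r_{G1},r_{G2},r_{G3}$ and comparing coefficients of $x^3y$, $x^2y^2$, $xy^3$ immediately rules out any dependence.
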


It is expected that the above theorem holds for the class of
AS regular algebras of global dimension four with two generators
and generic parameters.

Finally we classify all possible finite dimensional Hopf algebra
actions on $A(0,\beta)$ and $S(p)$ that satisfies
Hypothesis \ref{xxhyp2.1}.

\begin{proposition}
\label{xxpro2.16}
Let $A$ be $A(0,\beta)$ or $S(p)$ where $\beta$ and
$p$ are not roots of unity. Consider the Hopf algebra actions on $A$
satisfying Hypothesis {\rm{\ref{xxhyp2.1}}}. Then $H$ acts on $A$ if and
only if the dual Hopf algebra $K=H^\circ$ is a group algebra $kG$ where
$G$ is a quotient group of $\langle a,b \mid a^2 b=ba^2, ab^2=b^2 a\rangle$.
\end{proposition}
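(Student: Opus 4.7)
The plan is to combine two ingredients already available: the fact (from Propositions \ref{xxpro2.12}(1) and \ref{xxpro2.13}(1)) that under the hypothesis on $\beta$ or $p$, the dual Hopf algebra $K=H^\circ$ is necessarily a (finite) group algebra $kG$, together with direct verification that the relations of $A$ force relations on the group-like elements corresponding to $x$ and $y$.

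First I would observe the following normal form. By \eqref{E1.5.6} and \eqref{E1.5.7}, the Nakayama automorphism $\mu_A$ is diagonal in the basis $\{x,y\}$ with eigenvalues $\lambda_1\lambda_2^{-1}=\beta^2$ (respectively $-p^2$), which is not a root of unity by hypothesis. By Lemma \ref{xxlem2.3}(1--2), $y_{12}=y_{21}=0$, so writing $a:=y_{11}$ and $b:=y_{22}$, the coaction takes the form $\rho(x)=x\otimes a$ and $\rho(y)=y\otimes b$, with $a,b$ group-like elements of $K$. By inner faithfulness and the fact that $\{y_{ij}\}$ generates $K$, the elements $a$ and $b$ generate $K$, whence $K=kG$ for the finite group $G:=\langle a,b\rangle$.

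Next I would apply $\rho$ to each defining relation of $A$ to read off the relations satisfied by $a$ and $b$ in $G$. For $A(0,\beta)$ the relations are $x^2y-\beta yx^2=0$ and $xy^2-\beta y^2x=0$; applying $\rho$ gives
\[
x^2y\otimes a^2b-\beta yx^2\otimes ba^2=0,\qquad xy^2\otimes ab^2-\beta y^2x\otimes b^2a=0.
\]
Using $x^2y=\beta yx^2$ and $xy^2=\beta y^2x$ in $A$ and the fact that $\{yx^2\}$ and $\{y^2x\}$ are nonzero in $A$, these force $a^2b=ba^2$ and $ab^2=b^2a$. The computation for $A(7)=S(p)$ is identical up to signs (the $-p$ in the second relation gets absorbed the same way on both sides). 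Hence $G$ is a quotient of the abstract group $\Gamma:=\langle a,b\mid a^2b=ba^2,\ ab^2=b^2a\rangle$, establishing the forward direction.

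For the converse, given any finite quotient group $G$ of $\Gamma$ with chosen generators $a,b$, I would define a right $kG$-coaction on the free algebra $k\langle x,y\rangle$ by $\rho(x)=x\otimes a$ and $\rho(y)=y\otimes b$, and check that it descends to $A(0,\beta)$ (respectively $S(p)$) by direct substitution into the two relations: each relation becomes a scalar multiple of the original relation tensored with either $a^2b=ba^2$ or $ab^2=b^2a$, both of which vanish in $A\otimes kG$. The coaction preserves the grading by construction, and it is inner faithful because $a,b$ generate $G$, so any Hopf subalgebra $K'\subset kG$ with $\rho(A)\subseteq A\otimes K'$ must contain $\{a,b\}$ and hence equal $kG$. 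Dualizing yields the desired $H$-action. The one subtle point to watch is Hypothesis \ref{xxhyp2.1}(ii), i.e.\ checking that no ``hidden'' degeneration occurs; since $yx^2$ and $y^2x$ are $k$-linearly independent from the other words appearing (this is clear from the standard PBW-type basis for $A(0,\beta)$ and $S(p)$), the cancellation step in the forward direction is valid. This is the only place where one needs a small computation, but it is routine.
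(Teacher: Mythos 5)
Your proposal is correct and follows essentially the same route as the paper's proof: use the non-root-of-unity eigenvalue ratio $\beta^2$ (resp.\ $-p^2$) of $\mu_A$ together with Lemma \ref{xxlem2.3} to force the coaction into diagonal form with group-like $a,b$ generating $K$, apply $\rho$ to the two defining relations to extract $a^2b=ba^2$ and $ab^2=b^2a$, and verify the converse by defining the diagonal coaction directly. The only cosmetic difference is that you initially invoke Propositions \ref{xxpro2.12}(1) and \ref{xxpro2.13}(1) before redoing the Lemma \ref{xxlem2.3} argument, which is redundant but harmless.
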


\begin{proof}
If $H$ acts on $A$, then we have a right $K$-coaction $\rho: A\to A\otimes K$ with
$\rho(x)=x\otimes a_{11} + y\otimes a_{21}$ and $\rho(y)=x\otimes a_{12}+y\otimes a_{22}$, for some $a_{11},a_{12},a_{21},a_{22}\in K$.
By \ref{E1.5.6} and \ref{E1.5.7}, we have
$$\mu_{A}: \begin{cases}
x\to -\beta x, \quad y\to -\beta^{-1}y,\,\, \text{if} \,\, A=A(0,\beta) \\
x\to  p x, \quad y\to -p^{-1}y, \,\,\quad \text{if} \,\,\, A=S(p).
\end{cases} $$
Hence $\lambda_1\lambda_2^{-1}=\begin{cases}
\beta^2, \,\,\,\,\,\,\quad \text{if} \,\, A=A(0,\beta) \\ -p^2, \,\,\quad \text{if} \,\,\, A=S(p)
\end{cases}$ is not a root of unity. By Lemma \ref{xxlem2.3}, 
$a_{12}=a_{21}=0$. Thus $\rho(x)=x\otimes a_{11}$ and 
$\rho(y)=y\otimes a_{22}$.
Since the $K$-coaction on $A$ is inner faithful, $\{a_{11},a_{22}\}$ 
generates $K$ as a Hopf algebra. We have 
$\Delta_K(a_{ii})=a_{ii}\otimes a_{ii}$ by the 
coassociativity of $\rho$. Applying $\rho$ to 
the equations $x^2y=\beta yx^2, xy^2=\beta y^2x$ when  $A=A(0,\beta)$, 
we obtain that
\begin{align*}
x^2y\otimes a_{11}^2a_{22}=\rho(x^2y)=\beta\rho(yx^2)
=\beta yx^2\otimes a_{22}a_{11}^2=x^2y\otimes a_{22}a_{11}^2\\
xy^2\otimes a_{11}a_{22}^2=\rho(xy^2)=\beta\rho(y^2x)
=\beta y^2x\otimes a_{22}^2a_{11}=xy^2\otimes a_{22}^2a_{11}.
\end{align*}
Hence $a_{11}^2a_{22}=a_{22}a_{11}^2$ and $a_{11}a_{22}^2=a_{22}^2a_{11}$. We can similarly get $a_{11}^2a_{22}=a_{22}a_{11}^2$ 
and $a_{11}a_{22}^2=a_{22}^2a_{11}$ when $A=S(p)$ by applying $\rho$
to the equations $x^2y=pyx^2$ and  $xy^2=-py^2x$. 
Therefore, $K$ is a group algebra $kG$ where
$G$ is a quotient group of $\langle a,b \mid a^2 b=ba^2, ab^2=b^2 a\rangle$.

Conversely, we assume that $K$ is a group algebra $kG$ where
$G$ is a quotient group of $\langle a,b \mid a^2 b=ba^2, ab^2=b^2 a\rangle$.
Define a right $K$-coaction $\rho: A\to A\otimes K$ by 
$\rho(x)=x\otimes a$ and $\rho(y)=y\otimes b$. It is easy to check 
that the corresponding Hopf action on $A$ by $H=K^{\circ}$ satisfies 
the Hypothesis {\rm{\ref{xxhyp2.1}}}.
\end{proof}

As a consequence, if $A$ is either $A(\alpha,\beta)$ or $S(p)$ with generic
parameters, then $A^H$ is not AS regular for all non-trivial $H$.

\section{Hopf actions on $A$ with non-diagonalizable $\mu_A$}
\label{xxsec3}

In this section we will prove Theorems \ref{xxthm0.4} and \ref{xxthm0.5}
for algebras $A(3), A(4), A(5)$. We start with a few lemmas concerning
the automorphism $\eta_{\mu_A^{\tau}}$ of $K$.

\begin{lemma}
\label{xxlem3.1} Suppose $K$ is a finite dimensional Hopf algebra coacting
on $A$ via $\rho$. Let $y_{11}$ be a grouplike element in $K$.
\begin{enumerate}
\item[(1)]
If $z_{12}\in K$ satisfies $\Delta(z_{12})=y_{11}\otimes z_{12}+z_{12}\otimes
y_{11}$, then $z_{12}=0$.
\item[(2)]
Suppose $\{v,w\}$ are linearly independent in $A$ and $\rho(v)=v\otimes y_{11}$
and $\rho(w)=w\otimes y_{11}+v\otimes z_{12}$. Then $z_{12}=0$.
\end{enumerate}
\end{lemma}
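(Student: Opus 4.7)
The plan is to establish (1) first by reducing to the case of an ordinary primitive element, and then to deduce (2) from (1) using coassociativity of the coaction.

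For part (1), I will exploit that $y_{11}$ is grouplike, hence invertible in $K$ with inverse $S(y_{11})$ satisfying $\Delta(S(y_{11})) = S(y_{11}) \otimes S(y_{11})$. Setting $z := S(y_{11}) z_{12}$, a short computation
$$\Delta(z) = (S(y_{11}) \otimes S(y_{11}))(y_{11} \otimes z_{12} + z_{12} \otimes y_{11}) = 1 \otimes z + z \otimes 1$$
exhibits $z$ as a primitive element of $K$. The conclusion will then be immediate from the standard fact that a finite dimensional Hopf algebra over a field of characteristic zero has no nonzero primitive elements: if $z\neq 0$, the binomial formula $\Delta(z^n)=\sum_i \binom{n}{i} z^i\otimes z^{n-i}$ combined with a minimality argument forces $1,z,z^2,\ldots$ to be linearly independent, contradicting $\dim_k K<\infty$. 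Hence $z=0$, and multiplying on the left by $y_{11}$ gives $z_{12}=0$.

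For part (2), I would apply the coassociativity axiom $(\rho\otimes \mathrm{id})\rho=(\mathrm{id}\otimes \Delta)\rho$ to $w$. Substituting $\rho(v)=v\otimes y_{11}$ and $\rho(w)=w\otimes y_{11}+v\otimes z_{12}$ and expanding both sides yields, after cancelling the common summand $w\otimes y_{11}\otimes y_{11}$,
$$v\otimes \Delta(z_{12}) \;=\; v\otimes\bigl(y_{11}\otimes z_{12}+z_{12}\otimes y_{11}\bigr).$$
Since $\{v,w\}$ is linearly independent, in particular $v\neq 0$, so cancelling the leading $v$ gives $\Delta(z_{12})=y_{11}\otimes z_{12}+z_{12}\otimes y_{11}$. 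Part (1) now forces $z_{12}=0$.

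The only nontrivial ingredient is the vanishing of primitive elements in a finite dimensional Hopf algebra in characteristic zero; this is classical (cf.\ Montgomery's book) and I would either cite it or justify it in a single line by the binomial argument sketched above. Everything else is routine unpacking of the Hopf algebra and comodule axioms, so the overall argument is quite short.
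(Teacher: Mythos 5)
Your proof is correct and matches the paper's argument essentially verbatim: part (1) is proved by observing that $y_{11}^{-1}z_{12}$ (your $S(y_{11})z_{12}$) is a primitive element, which must vanish since a finite dimensional Hopf algebra in characteristic zero has no nonzero primitives, and part (2) follows by applying coassociativity to $\rho(w)$ and comparing the $v$-components to reduce to part (1). The extra detail you supply (the explicit computation of $\Delta(z)$ and the sketch of why primitives vanish) is sound and only fills in steps the paper leaves implicit.
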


\begin{proof} (1) Let $x=y_{11}^{-1}z_{12}$, then $x$ is a primitive element in $K$.
Since ${\text{char}}\; k=0$ and $K$ is finite dimensional, $K$ does not contain
any nonzero primitive element. So $x=0$ and whence $z_{12}=0$.

(2) Applying coassociativity, we have $\Delta(z_{12})=y_{11}\otimes z_{12}+z_{12}\otimes
y_{11}$. Then assertion follows from part (1).
\end{proof}

In the next lemma the global dimension of $A$ could be larger than $2$.
We say an $H$-action on $A$ is graded trivial if $H=k\langle \sigma\rangle$
and $\sigma$ is a graded algebra automorphism of $A$ of the form
$\sigma: a\to \xi^{\deg a} a$ for some root of unity $\xi\in k$.

\begin{proposition}
\label{xxpro3.2}
Suppose $A$ is an AS regular algebra generated by $t_1$ and $t_2$ in degree one
such that $\mu_A$ is non-diagonalizable. Let $H$ be a Hopf algebra acting on $A$.
Then $H$ is isomorphic to $k {\mathbb Z}/(n)$ for some $n$ and the
$H$-action on $A$ is graded trivial.
\end{proposition}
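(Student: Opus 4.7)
The plan is to apply the homological identity of Theorem \ref{xxthm2.2} to a Jordan basis for $\mu_A|_{A_1}$ and read off strong constraints on the matrix coefficients $y_{ij}$ of the dual coaction $\rho\colon A\to A\otimes K$ where $K = H^\circ$. Since $\mu_A$ is a non-diagonalizable invertible endomorphism of the two-dimensional space $A_1$, I first choose a basis $\{v_1,v_2\}$ of $A_1$ in Jordan form, so that $\mu_A(v_1) = \lambda v_1$ and $\mu_A(v_2) = v_1 + \lambda v_2$ for some $\lambda \in k^\times$. With respect to this basis, I write down the matrix ${\mathbb M}$ of \eqref{E2.2.2} and its inverse ${\mathbb W}$ explicitly and substitute into the formula \eqref{E2.2.4} for $\phi := \eta_{\mu_A^\tau}$. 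A direct $2 \times 2$ calculation gives $\phi(y_{11}) = y_{11} + \lambda^{-1} y_{21}$, $\phi(y_{21}) = y_{21}$, $\phi(y_{22}) = y_{22} - \lambda^{-1} y_{21}$, and $\phi(y_{12}) = -\lambda^{-1} y_{11} + y_{12} - \lambda^{-2} y_{21} + \lambda^{-1} y_{22}$.

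Next I use that $\phi = \eta_D \circ S^2$ has finite order $N$ since $K$ is finite dimensional. Iterating the first formula gives $\phi^n(y_{11}) = y_{11} + n\lambda^{-1} y_{21}$; taking $n = N$ and using $\mathrm{char}\, k = 0$ forces $y_{21} = 0$. With $y_{21} = 0$, the formula for $\phi(y_{12})$ reduces to $\phi(y_{12}) = y_{12} + \lambda^{-1}(y_{22} - y_{11})$ while $\phi(y_{11}) = y_{11}$ and $\phi(y_{22}) = y_{22}$; iterating once more and using finite order of $\phi$ forces $y_{11} = y_{22}$. Now the coproduct $\Delta(y_{12}) = y_{11}\otimes y_{12} + y_{12}\otimes y_{22} = y_{11}\otimes y_{12} + y_{12}\otimes y_{11}$ has exactly the form required by Lemma \ref{xxlem3.1}(1), yielding $y_{12} = 0$.

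Therefore $\rho(v_1) = v_1 \otimes y_{11}$ and $\rho(v_2) = v_2 \otimes y_{11}$, where $y_{11} = y_{22}$ is a grouplike element of $K$. Inner faithfulness of the coaction implies that $y_{11}$ generates $K$ as a Hopf algebra, so $K$ is the group algebra of the finite cyclic group $\langle y_{11}\rangle$, and hence $K \cong k{\mathbb Z}/(n)$ and $H \cong k{\mathbb Z}/(n)$. Fixing a generator $\sigma$ of $H$, the corresponding action on $A_1$ is multiplication by the single scalar $\xi := y_{11}(\sigma)$, an $n$-th root of unity; this extends multiplicatively to multiplication by $\xi^d$ on $A_d$, which is precisely the condition that the $H$-action be graded trivial.

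The main obstacle is setting up the Jordan form cleanly and carefully executing the bookkeeping in \eqref{E2.2.4} (tracking which index is contracted by ${\mathbb M}$ and which by ${\mathbb W}^\top$); once the four formulas for $\phi$ on the $y_{ij}$ are in hand, the finite-order-in-characteristic-zero trick and a single application of Lemma \ref{xxlem3.1} close out the proof without further difficulty.
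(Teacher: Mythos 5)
Your proposal is correct and follows essentially the same argument as the paper's proof: Jordan-normalize $\mu_A$ on $A_1$, compute $\eta_{\mu_A^{\tau}}$ on the $y_{ij}$ via \eqref{E2.2.4}, use finite order in characteristic zero to kill $y_{21}$ and force $y_{11}=y_{22}$, and then apply Lemma \ref{xxlem3.1}(1) to kill $y_{12}$, leaving a single grouplike generator. The only cosmetic difference is the normalization of the off-diagonal Jordan entry ($1$ versus $a$), which does not affect the argument.
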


\begin{proof} Up to a base change we may assume that
$\mu_A$ sends $t_1\to at_1$ and $t_2\to at_2+at_1$. So the matrix
${\mathbb M}$ defined by \eqref{E2.2.2} is $\begin{pmatrix} a& 0\\a&a\end{pmatrix}$.
Let $\rho: A\to A\otimes K$ be the coaction induced by the $H$-action and write
$\rho(t_i)=\sum_{s=1}^2 t_s\otimes y_{si}$ as defined in \eqref{E2.2.3}.
Then the map $f:=\eta_{\mu_A^\tau}$ defined in \eqref{E2.2.4} sends
$$\begin{aligned}
y_{11}&\to y_{11}+y_{21},\\
y_{12}&\to -(y_{11}+y_{21})+y_{12}+y_{22},\\
y_{21}&\to y_{21},\\
y_{22}&\to -y_{21}+y_{22}.
\end{aligned}
$$
It is easy to see that $f^n(y_{11})=y_{11}+n y_{21}$. Since $f$ has finite order and
${\text{char}}\; k=0$, $y_{21}=0$. As a consequence, $f(y_{ii})=y_{ii}$ for $i=1,2$.
Now we have $f^n(y_{12})=y_{12}+n(y_{22}-y_{11})$. Since $f$ has finite order and
${\text{char}}\; k=0$, $y_{22}-y_{11}=0$. Since $y_{22}=y_{11}$, $\Delta(y_{12})=y_{11}
\otimes y_{12}+y_{12}\otimes y_{11}$. By Lemma \ref{xxlem3.1}(1), $y_{12}=0$. Thus
$K=k\langle y_{11}\rangle$ and $H=k\langle \sigma \rangle $ where $\sigma$
maps $t_i$ to $\xi t_i$ for some root of unity. Thus the $H$-action on $A$
is graded trivial.
\end{proof}

Again in the next lemmas, $A$ is AS regular of finite global dimension.

\begin{lemma}
\label{xxlem3.3} Suppose $A$ is generated by elements $t_1,t_2,t_3$
and $K$-coacts on $A$ with $\rho(t_i)=\sum_{s=1}^3 t_j \otimes y_{ji}$
for all $i=1,2,3$, where $\{y_{ij}\}_{1\leq i,j\leq 3}$ are elements in
$K$.

\begin{enumerate}
\item[(1)]
If $\mu_A: t_1\to a t_1, t_2\to a t_2 +b t_1, t_3\to a^{-2} t_3$ for some
$a,b\in k^\times$, then $\eta_{\mu_A^{\tau}}$ is determined by
$$\eta_{\mu_A^{\tau}}: \quad
\begin{aligned}
y_{11} & \to y_{11}+a^{-1}b y_{21}\\
y_{12} & \to -a^{-1}b y_{11}-a^{-2}b^2y_{21}+y_{12}+a^{-1}by_{22}\\
y_{13} & \to a^{3} y_{13}+a^2 by_{23}\\
y_{21} & \to y_{21}\\
y_{22} & \to -a^{-1}by_{21}+y_{22}\\
y_{23} & \to a^3 y_{23}\\
y_{31} & \to a^{-3}y_{31}\\
y_{32} & \to -a^{-4}b y_{31}+a^{-3}y_{32}\\
y_{33} & \to y_{33}\\
\end{aligned}
$$
\item[(2)]
If $\mu_A: t_1\to t_1, t_2\to t_2 +b t_1, t_3\to t_3+ bt_2+b^2 t_1$ for some
$b\in k^\times$, then $\eta_{\mu_A^{\tau}}$ is determined by
$$\eta_{\mu_A^{\tau}}: \quad
\begin{aligned}
y_{11} & \to y_{11}+ay_{21}+a^2y_{31}\\
y_{12} & \to y_{12}-ay_{11}+ay_{22}-a^2y_{21}+a^2y_{32}-a^3y_{31}\\
y_{13} & \to y_{13}-ay_{12}+ay_{23}-a^2y_{22}+a^2y_{33}-a^3y_{32}\\
y_{21} & \to y_{21}+ay_{31}\\
y_{22} & \to y_{22}-ay_{21}+ay_{32}-a^2y_{31}\\
y_{23} & \to y_{23}-ay_{22}+ay_{33}-a^2y_{32}\\
y_{31} & \to y_{31}\\
y_{32} & \to y_{32}-ay_{31}\\
y_{33} & \to y_{33}-ay_{32}\\
\end{aligned}
$$
\end{enumerate}
\end{lemma}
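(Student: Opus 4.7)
The plan is to verify both parts by direct substitution into the defining identity (E2.2.4)
$$\eta_{\mu_A^\tau}(y_{ij}) = \sum_{s,t=1}^{3} m_{si}\, y_{st}\, w_{jt}.$$
The whole argument reduces to three routine steps: (a) read off the matrix $\mathbb{M}=(m_{ij})$ from the given $\mu_A$ using the convention $\mu_A(t_i)=\sum_j m_{ij}t_j$ of (E2.2.2); (b) compute $\mathbb{W}=\mathbb{M}^{-1}=(w_{ij})$; and (c) expand the double sum for each of the nine pairs $(i,j)\in\{1,2,3\}^2$.

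For part (1), reading off the coefficients of $\mu_A$ yields
$$\mathbb{M}=\begin{pmatrix} a & 0 & 0 \\ b & a & 0 \\ 0 & 0 & a^{-2}\end{pmatrix},\qquad \mathbb{W}=\begin{pmatrix} a^{-1} & 0 & 0 \\ -a^{-2}b & a^{-1} & 0 \\ 0 & 0 & a^{2}\end{pmatrix};$$
here $\mathbb{W}$ is obtained by inverting a $2\times 2$ block and a $1\times 1$ block separately. Because only a handful of entries $m_{si}$ (for fixed $i$) and $w_{jt}$ (for fixed $j$) are nonzero, each sum $\eta_{\mu_A^\tau}(y_{ij})$ collapses to at most four monomials, which upon collection reproduces the nine formulas in the statement.

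For part (2) the same procedure applies with
$$\mathbb{M}=\begin{pmatrix} 1 & 0 & 0 \\ b & 1 & 0 \\ b^{2} & b & 1\end{pmatrix},\qquad \mathbb{W}=\begin{pmatrix} 1 & 0 & 0 \\ -b & 1 & 0 \\ 0 & -b & 1\end{pmatrix}.$$
Here $\mathbb{M}$ is lower-unitriangular of Jordan type; the inverse is the indicated sub-diagonal modification, verified by a three-line check that $\mathbb{M}\mathbb{W}=I$. Substituting into (E2.2.4) for each $(i,j)$ and collecting coefficients yields the nine identities listed in part (2).

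The only genuine pitfall is keeping the index convention of (E2.2.2) straight: $(m_{ij})$ is the transpose of the matrix of $\mu_A$ acting on a column vector of basis elements, and in (E2.2.4) the sums run over the \emph{first} index of $m$ and the \emph{second} index of $w$. Once this is fixed the computation is entirely mechanical and uses no Hopf-algebraic input beyond (E2.2.4). As a useful sanity check, the formula can be rewritten compactly as $\eta_{\mu_A^\tau}(Y)=\mathbb{M}^{T}\,Y\,(\mathbb{M}^{T})^{-1}$ with $Y=(y_{ij})$, which transparently exhibits $\eta_{\mu_A^\tau}$ as a conjugation, consistent with Theorem \ref{xxthm2.2}.
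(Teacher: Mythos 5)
Your proposal is correct and follows exactly the route of the paper's own proof, which likewise reads off $\mathbb{M}$ from \eqref{E2.2.2}, observes that \eqref{E2.2.4} amounts to $\eta_{\mu_A^{\tau}}(y_{ij})=\mathbb{M}^{\tau}(y_{ij})(\mathbb{M}^{\tau})^{-1}$, and leaves the rest to direct computation; your matrices $\mathbb{M}$ and $\mathbb{W}$ are the right ones in both parts. (The only discrepancy, a harmless one, is that the displayed formulas in part (2) of the lemma write $a$ where the hypothesis names the parameter $b$; your computation produces the formulas with $b$ throughout, which is what is intended.)
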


\begin{proof} If necessary, please go back to Section \ref{xxsec2} to review
the definition of $\eta_{\mu_A^{\tau}}$.

(1) In this case, ${\mathbb M}
=\begin{pmatrix} a& 0& 0\\ b &a &0\\0&0& a^{-2}\end{pmatrix}$.
By \eqref{E2.2.4}, $\eta_{\mu_A^{\tau}}(y_{ij})={\mathbb M}^{\tau} (y_{ij})
({\mathbb M}^{\tau})^{-1}$. The assertion follows by a direct computation.

(2) Similar to (1).
\end{proof}

\begin{lemma}
\label{xxlem3.4} Retain the hypotheses of Lemma {\rm{\ref{xxlem3.3}}}.
\begin{enumerate}
\item[(1)]
If $\eta_{\mu_A^{\tau}}$ in Lemma {\rm{\ref{xxlem3.3}(1)}} has finite order,
then $y_{21}=y_{23}=y_{31}=y_{11}-y_{22}=0$ and $y_{11}$ and $y_{33}$ are grouplike elements.
If $a$ is not a root of unity, then $y_{ij}=0$ for all $i\neq j$.
\item[(2)]
If $\eta_{\mu_A^{\tau}}$ in Lemma {\rm{\ref{xxlem3.3}(1)}} is the identity
and $a^3\neq 1$, then $y_{ij}=0$ for all $i\neq j$.
\item[(3)]
If $\eta_{\mu_A^{\tau}}$ in Lemma {\rm{\ref{xxlem3.3}(2)}} has finite order,
then $y_{ij}=0$ and $y_{ii}=y_{jj}$ for all $i\neq j$.
\end{enumerate}
\end{lemma}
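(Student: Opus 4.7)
The plan is to exploit the standing hypothesis that $f := \eta_{\mu_A^{\tau}}$ has finite order $N$ (and is even the identity in part (2)), combined with $\operatorname{char} k = 0$. For each generator $y_{ij}$ I would compute $f^n(y_{ij})$ in closed form using the triangular formulas of Lemma \ref{xxlem3.3}. In each case the iteration produces coefficients that depend polynomially on $n$; setting $n = N$ and invoking finite order plus characteristic zero then forces various $y_{ij}$ to vanish or various $y_{ii}$ to coincide. Residual off-diagonal entries not killed directly by iteration will turn out to be $(g,g)$-primitive for some grouplike $g$, and thus annihilated by Lemma \ref{xxlem3.1}(1).

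For part (1), I would first observe that $f(y_{21}) = y_{21}$ and $f(y_{11}) = y_{11} + a^{-1} b y_{21}$, so $f^n(y_{11}) = y_{11} + n a^{-1} b y_{21}$, forcing $y_{21} = 0$ at $n = N$. For the pair $(y_{13}, y_{23})$ one computes $f^n(y_{13}) = a^{3n} y_{13} + n a^{3n-1} b y_{23}$; if $y_{23} \neq 0$, then finiteness applied to $y_{23}$ gives $a^{3N} = 1$, and $f^N(y_{13}) = y_{13}$ collapses to $N a^{-1} b y_{23} = 0$, a contradiction, so $y_{23} = 0$. A symmetric argument on $(y_{31}, y_{32})$ yields $y_{31} = 0$. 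Then $f(y_{12})$ reduces to $y_{12} + a^{-1} b (y_{22} - y_{11})$, and iterating forces $y_{11} = y_{22}$. The coproducts $\Delta(y_{ss}) = \sum_j y_{sj} \otimes y_{js}$ now collapse to $y_{ss} \otimes y_{ss}$ for $s = 1, 3$, so $y_{11}$ and $y_{33}$ are grouplike. If moreover $a$ is not a root of unity, the eigenvalue identities $f(y_{13}) = a^3 y_{13}$ and $f(y_{32}) = a^{-3} y_{32}$ immediately kill $y_{13}$ and $y_{32}$ by finite order, after which $\Delta(y_{12}) = y_{11} \otimes y_{12} + y_{12} \otimes y_{11}$ allows Lemma \ref{xxlem3.1}(1) to finish with $y_{12} = 0$. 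Part (2) is then a simpler specialization: if $f = \operatorname{id}$ and $a^3 \neq 1$, then $f(y_{23}) = a^3 y_{23} = y_{23}$ gives $y_{23} = 0$ at once, and the remaining fixed-point equations cascade to dispatch every other off-diagonal entry, ending with Lemma \ref{xxlem3.1}(1) on $y_{12}$.

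Part (3) follows the same template on the formulas of Lemma \ref{xxlem3.3}(2). Starting from the fixed generator $y_{31}$: since $f^n(y_{21}) = y_{21} + n a y_{31}$, I get $y_{31} = 0$, and then $f^n(y_{11}) = y_{11} + n a y_{21} + \binom{n+1}{2} a^2 y_{31}$ gives $y_{21} = 0$. A symmetric iteration starting from $y_{33}$ yields $y_{32} = 0$, and the simplified equations for $f(y_{12})$, $f(y_{23})$, $f(y_{13})$ then force $y_{11} = y_{22} = y_{33} =: g$ and $y_{12} = y_{23}$. The coproduct computations now show $g$ is grouplike, and $y_{12}, y_{13}$ are $(g,g)$-primitive, so Lemma \ref{xxlem3.1}(1) gives $y_{12} = y_{13} = 0$. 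The main obstacle throughout is keeping track of which vanishings simplify which subsequent formulas, i.e.\ choosing the right elimination order; the subtlest subcase is part (1) when $a^3$ is a root of unity, where the naive eigenvalue obstruction fails and one must use the polynomial-in-$n$ growth together with $\operatorname{char} k = 0$.
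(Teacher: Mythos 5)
Your proposal is correct and follows essentially the same strategy as the paper's proof: iterate $\eta_{\mu_A^\tau}$ to produce coefficients linear in $n$, invoke finite order and $\operatorname{char} k=0$ to kill the corresponding entries, handle the eigenvalue entries $y_{13},y_{23},y_{31},y_{32}$ via the same coupled-pair analysis, and finish off $y_{12}$ (and $y_{13}$ in part (3)) with the primitive-element Lemma \ref{xxlem3.1}(1). The only deviations are cosmetic (e.g.\ in part (1) you derive $a^{3N}=1$ from $f^N(y_{23})=y_{23}$ rather than writing $y_{23}=\alpha y_{13}$ as the paper does, and in part (3) you start the elimination from $y_{21}$ rather than $y_{32}$), and your explicit writeup of part (2) fills in the step the paper leaves as ``similar to part (1).''
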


\begin{proof} Let $f=\eta_{\mu_A^{\tau}}$ in the proof.

(1) Applying $f$ to $y_{11}$ multiple times, we have $f^n(y_{11})=y_{11}+na^{-1}by_{21}$.
Since $f$ has finite order (and ${\text{char}}\; k=0$),
$y_{21}=0$. As a consequence, $f(y_{11})=y_{11}$ and $f(y_{22})=y_{22}$.
Applying $f$ to $y_{12}$, one sees that $f^n(y_{12})=y_{12}+na^{-1}b(y_{22}-y_{11})$.
Hence $y_{11}=y_{22}$.

Now we have $f(y_{13})=a^3 y_{13}+a^2b y_{23}$ and
$f(y_{23})=a^3 y_{23}$. So induction shows that $f^n(y_{13})=a^{3n} y_{13}+(n-1)a^{3n-1}by_{23}$
for all $n$.
Suppose $y_{13}\neq 0$. Then 
the fact that $f$ has finite order implies that
$y_{23}=\alpha y_{13}$. Then equations $f(y_{13})=a^3 y_{13}+a^2b y_{23}$ and
$f(y_{23})=a^3 y_{23}$ imply that $y_{23}=0$. If $y_{13}=0$, then
the equation $f(y_{13})=a^3 y_{13}+a^2b y_{23}$ implies that $y_{23}=0$.
Therefore $y_{23}=0$. Similarly, one can show that $y_{31}=0$.

Using the equations $y_{21}=y_{23}=y_{31}=0$, one can easily check that
$y_{11}=y_{22}$ and $y_{33}$ are grouplike elements.

Finally, if $a$ is not a root of unity, then the finite-orderness of $f$
implies that $y_{13}=y_{32}=0$. In this case, $\Delta(y_{12})=y_{11}\otimes y_{11}
+y_{11}\otimes y_{12}$. It follows from Lemma \ref{xxlem3.1}(1) that $y_{12}=0$.
Thus we complete the proof of $y_{ij}=0$ for all $i\neq j$.

(2) The proof is similar to the proof of part (1).

(3) Note that $f^n(y_{32})=y_{32}-na y_{31}$ for all $n$ and $na\neq 0$ for
all $n>0$. Since $f$ has finite order, $y_{31}=0$. Similarly, $y_{32}=0$.
Then $f(y_{21})=y_{21}$. Applying $f$ to $y_{11}$, one sees that $y_{21}=0$.
Now $f(y_{ii})=y_{ii}$ for all $i$.
Since $y_{32}=0$, $f(y_{23})=y_{23}+a(y_{33}-y_{22})$. Since $f$ has finite
order, we have $y_{33}-y_{22}=0$. Applying $f$ to $y_{12}$, one sees that
$y_{22}=y_{11}$. Now applying $f$ to $y_{22}$, one sees that $y_{31}=0$.
Finally applying $f$ to $y_{13}$, one sees that $y_{12}=y_{23}$. Since
$y_{32}=0$, we have $\Delta(y_{12})=y_{11}\otimes y_{12}+y_{12}\otimes y_{11}$.
By Lemma \ref{xxlem3.1}(1), $y_{12}=0$ (and whence $y_{23}=0$). Finally,
$$\Delta(y_{13})=y_{11}\otimes y_{13}+y_{12}\otimes y_{23}+y_{13}\otimes y_{33}
=y_{11}\otimes y_{13}+y_{13}\otimes y_{11}$$
which implies that $y_{13}=0$ by Lemma \ref{xxlem3.1}(1).
\end{proof}

Going back to global dimension three we have the following.

\begin{proposition}
\label{xxpro3.5}
Let $A$ be the algebra $A(3)$ or $A(4)$ and let $H$ act on $A$.
\begin{enumerate}
\item[(1)]
If $q$ is not a root of unity  when $A=A(3)$ and $p$ is not a root of unity
when $A=A(4)$, then $H$ is a commutative group algebra.
\item[(2)]
If $H$ is semisimple, then $H$ is a commutative group algebra.
\end{enumerate}
\end{proposition}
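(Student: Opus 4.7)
Both Nakayama automorphisms from \eqref{E1.5.3} and \eqref{E1.5.4} fit the template of Lemma \ref{xxlem3.3}(1): we have $\mu_A(t_1)=at_1$, $\mu_A(t_2)=at_2+bt_1$, $\mu_A(t_3)=a^{-2}t_3$ with $(a,b)=(q^{-1},-3q^{-1})$ for $A(3)$ and $(a,b)=(p^{-1},-2p^{-1})$ for $A(4)$. By Theorem \ref{xxthm2.2}, $\eta_{\mu_A^\tau}=\eta_D\circ S^2$ always has finite order on the finite-dimensional $K=H^\circ$, so Lemma \ref{xxlem3.4}(1) applies unconditionally and yields $y_{21}=y_{23}=y_{31}=0$, $y_{11}=y_{22}$, with $y_{11}$ and $y_{33}$ grouplike. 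This is the common reduction for both parts.

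For part (1), the additional hypothesis that $q$ (resp.\ $p$) is not a root of unity is exactly the hypothesis that $a$ is not a root of unity in Lemma \ref{xxlem3.4}(1), so its stronger conclusion gives $y_{ij}=0$ for all $i\neq j$. Thus $\rho(t_i)=t_i\otimes y_{ii}$ with $y_{11}=y_{22}$ and $y_{33}$ grouplike. Inner faithfulness forces $K=k\langle y_{11},y_{33}\rangle$ to be a group algebra; applying $\rho$ to the cross-commutation relation $t_3t_1=qt_1t_3$ (or $t_3t_1=pt_1t_3$) yields $y_{11}y_{33}=y_{33}y_{11}$, so $K$ is commutative. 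Over an algebraically closed field of characteristic zero, $H=K^\circ$ is then a commutative group algebra.

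For part (2), semisimplicity of $H$ gives $S^2=\mathrm{id}$, so $\eta_{\mu_A^\tau}=\eta_D$ is conjugation by the grouplike homological codeterminant $D$. After the common reduction above, it remains to show $y_{12}=y_{13}=y_{32}=0$. The plan is to apply $\rho$ to each defining relation of $A(3)$ or $A(4)$ and use linear independence of PBW monomials to obtain commutation identities between the off-diagonal entries and the grouplikes $y_{11},y_{33}$, as well as among $y_{12},y_{13},y_{32}$ themselves. Collecting enough of these identities should reduce $\Delta(y_{12})=y_{11}\otimes y_{12}+y_{12}\otimes y_{11}+y_{13}\otimes y_{32}$ (and its analogues for $y_{13},y_{32}$) to the primitive-plus-grouplike shape required by Lemma \ref{xxlem3.1}(1); that lemma then kills the entries, since a finite-dimensional $K$ in characteristic zero has no nonzero primitives. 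Once the off-diagonal entries vanish, $K$ reduces as in part (1) to a commutative group algebra.

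The main obstacle is part (2): the root-of-unity dichotomy in Lemma \ref{xxlem3.4}(1) is no longer available, so the eigenvalue scaling under $\eta_D$ alone does not kill $y_{12},y_{13},y_{32}$. Instead the argument must exploit the detailed combinatorics of the relations in $A(3)$ and $A(4)$, together with Lemma \ref{xxlem3.1}. Carefully tracking PBW-coefficients to verify that $\Delta$ collapses to the primitive shape and that the antipode and coassociativity axioms force consistency with the grouplike commutation patterns is the delicate technical step; this is where the specific normal-element/graded-twist structure of $A(3)$ and $A(4)$ enters decisively.
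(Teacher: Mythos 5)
Your part (1) is correct and follows the paper's route exactly: the common reduction via Lemma \ref{xxlem3.4}(1), the stronger conclusion $y_{ij}=0$ for $i\neq j$ when $a$ is not a root of unity, and then commutativity of $K$ from the skew-commutation of $t_1$ and $t_3$ as in Proposition \ref{xxpro2.10}.

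Part (2), however, has a genuine gap. What you offer is a plan (``applying $\rho$ to each defining relation \dots should reduce $\Delta(y_{12})$ to the primitive-plus-grouplike shape''), and you yourself flag the decisive step as not carried out. Worse, the plan as described is unlikely to close: applying $\rho$ to the relations with the reduced coaction $\rho(t_1)=t_1\otimes y_{11}$, $\rho(t_3)=t_1\otimes y_{13}+t_3\otimes y_{33}$ only produces $q$-commutation identities such as $y_{13}y_{11}=q\,y_{11}y_{13}$. Since $y_{11}$ is a grouplike of finite order $m$, iterating the conjugation gives $y_{13}=q^m y_{13}$, which kills $y_{13}$ only when $q^m\neq 1$. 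But part (2) is asserted for \emph{all} $q$ (resp.\ $p$), including roots of unity, so this mechanism fails precisely in the remaining cases. Moreover $\Delta(y_{13})=y_{11}\otimes y_{13}+y_{13}\otimes y_{33}$ is not of the shape required by Lemma \ref{xxlem3.1}(1) unless you first prove $y_{11}=y_{33}$, and nothing in your outline produces that. The missing idea is the paper's use of semisimplicity in a structural way: since $H$ is semisimple, $A_1$ is a direct sum of simple $H$-modules; the vanishing $y_{21}=y_{23}=y_{31}=0$ and $y_{11}=y_{22}$ from Lemma \ref{xxlem3.4}(1) forces every simple subcomodule of the coalgebra generated by the $y_{ij}$ to be one-dimensional; inner faithfulness then makes $H$ commutative, i.e.\ $K$ cocommutative. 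Cocommutativity applied to $\Delta(y_{13})$ yields either $y_{13}=0$ or $y_{11}=y_{33}$, and in the latter case Lemma \ref{xxlem3.1}(1) gives $y_{13}=0$ anyway; the same argument disposes of $y_{12}$ and $y_{32}$, after which the proof concludes as in part (1).
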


\begin{proof} (1) By \eqref{E1.5.3} and \eqref{E1.5.4}, the
Nakayama automorphism of $A$ is of the form given in Lemma \ref{xxlem3.3}(1).
By Lemma \ref{xxlem3.4}(1), $y_{ij}=0$ for all $i\neq j$. So the $K$-coaction
is given by $\rho(t_1)=t_1\otimes y_{11}, \rho(t_2)=t_2\otimes y_{11}$ and
$\rho(t_3)=t_3\otimes y_{33}$ where each $y_{ii}$ is a grouplike element. 
So $K$ is a group
algebra. Since $t_1$ and $t_3$ are skew commutative, $y_{11}$ commutes
with $y_{33}$, see the proof of Proposition \ref{xxpro2.10}. So $K$
(and whence $H$) is a commutative group algebra.

(2) Assume $H$ is semisimple. Then $A_1$ is a direct sum of simples.
Let $C$ be the sub-coalgebra generated by $\{y_{ij}\}_{1\leq i,j\leq 3}$.
By Lemma \ref{xxlem3.4}(1), $y_{21}=y_{23}=y_{31}=y_{11}-y_{22}=0$.
Then every simple comodule is $1$-dimensional. Therefore $A_1$ is a direct
sum of three 1-dimensional simple $K$-comodules, or three
1-dimensional simple $H$-modules. Since $H$-action is inner-faithful,
$H$ is commutative, or $K$ is cocommutative. Applying $\Delta$ to $y_{13}$
and using the cocommutativity, we have $y_{33}=y_{11}$ or $y_{13}=0$.
If $y_{11}=y_{33}$, then Lemma \ref{xxlem3.1}(1) implies that
$y_{13}=0$. Therefore $y_{13}=0$. Similarly, $y_{31}=y_{32}=0$,
so $y_{ij}=0$ for all $i\neq j$. The rest of the proof is similar to
the proof of part (1).
\end{proof}

Up to a $k$-linear basis change, the Nakayama automorphism of
$A(5)$ is of the form given in Lemma \ref{xxlem3.3}(2).
By using Lemma \ref{xxlem3.4}(3) and the ideas in the proof of
Proposition \ref{xxpro3.5}, we have the following.

\begin{proposition}
\label{xxpro3.6}
Let $A$ be the algebra $A(5)$ and let $H$ act on $A$.
Then $H$ is a commutative group algebra and the $H$-action is graded
trivial.
\end{proposition}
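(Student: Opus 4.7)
The plan is to mirror the argument of Proposition~\ref{xxpro3.5}, using Lemma~\ref{xxlem3.4}(3) in place of Lemma~\ref{xxlem3.4}(1). First I would perform a $k$-linear basis change on $V := A(5)_1$ so that $\mu_{A(5)}$ takes the form required by Lemma~\ref{xxlem3.3}(2). From \eqref{E1.5.5}, the matrix of $\mu_{A(5)}$ with respect to $\{t_1,t_2,t_3\}$ is upper-triangular with all eigenvalues equal to $1$ and nonzero nilpotent part, hence is a single $3\times 3$ Jordan block; such a matrix is conjugate over $k$ to the one appearing in Lemma~\ref{xxlem3.3}(2) for any $b\in k^\times$. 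An explicit choice is $s_1=t_1$, $s_2=t_2$, $s_3=t_3-2t_2$, which yields $\mu_{A(5)}(s_1)=s_1$, $\mu_{A(5)}(s_2)=s_2-3s_1$, $\mu_{A(5)}(s_3)=s_3-3s_2+9s_1$, matching Lemma~\ref{xxlem3.3}(2) with $b=-3$.

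Next, Theorem~\ref{xxthm2.2} together with the finite-dimensionality of $K$ forces $\eta_{\mu_A^\tau}$ to have finite order. Applying Lemma~\ref{xxlem3.4}(3) in the new basis, I conclude that $y_{ij}=0$ for all $i\neq j$ and $y_{11}=y_{22}=y_{33}$, so the coaction satisfies $\rho(s_i)=s_i\otimes y_{11}$ for $i=1,2,3$, where $y_{11}$ is a grouplike element.

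Inner faithfulness of the coaction, combined with the fact that $K$ is generated as a Hopf algebra by $\{y_{ij}\}$, then gives $K=k\langle y_{11}\rangle\cong k\mathbb{Z}/(n)$, where $n$ is the (finite) order of $y_{11}$ in $K$. Dually, $H\cong k\mathbb{Z}/(n)$ is a cyclic, hence commutative, group algebra, say $H=k\langle\sigma\rangle$, where $\sigma$ acts on $A_1$ as scalar multiplication by a primitive $n$-th root of unity $\xi$. Since $\sigma$ is an algebra automorphism, $\sigma|_{A_d}$ is multiplication by $\xi^d$ for every $d$, which is exactly the definition of a graded trivial action. I expect the only real work to lie in the initial basis change; everything afterward is mechanical given the structural results already developed in this section.
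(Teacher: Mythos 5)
Your proposal is correct and follows exactly the route the paper intends: the paper's (unwritten) proof is precisely "change basis so that $\mu_{A(5)}$ has the form of Lemma \ref{xxlem3.3}(2), apply Lemma \ref{xxlem3.4}(3), and argue as in Proposition \ref{xxpro3.5}." Your explicit substitution $s_3=t_3-2t_2$ does give $\mu(s_3)=s_3-3s_2+9s_1$ as claimed, and the remaining steps (finite order of $\eta_{\mu_A^\tau}$, inner faithfulness forcing $K=k\langle y_{11}\rangle$, dualizing to a cyclic group acting by a root of unity on $A_1$) are all sound.
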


Now we are ready to prove Theorems \ref{xxthm0.4} and \ref{xxthm0.5}.

\begin{proof}[Proof of Theorem \ref{xxthm0.4}]
(1) This is Proposition \ref{xxpro2.8}(1).

(2) Proposition \ref{xxpro2.9}(1).

(3,4) Proposition \ref{xxpro3.5}(1).

(5) Proposition \ref{xxpro3.6}.

(6) Proposition \ref{xxpro2.12}(2).
\end{proof}

\begin{proof}[Proof of Theorem \ref{xxthm0.5}]

(1) This is Proposition \ref{xxpro2.8}(2).

(2) Proposition \ref{xxpro2.9}(2).

(3,4) Proposition \ref{xxpro3.5}(2).

(6) Proposition \ref{xxpro2.12}(3).

(7) Proposition \ref{xxpro2.13}(2).
\end{proof}

Statements in (0.4.8), (0.4.9), (0.5.8) and (0.5.9) are
proved in Propositions \ref{xxpro2.14} and \ref{xxpro2.15}.

To conclude this section we prove a lemma which should be
useful for the study of Hopf actions on AS regular algebras of higher
global dimension. Suppose $K$ coacts on an AS regular algebra $A$ and
$A$ is generated by $V:=A_1$. Suppose that $V$ is a direct sum of simple
$K$-comodules $\oplus_{i=1}^\alpha V_i$. We can choose a basis for each
$V_i$, say $v^i_{1}, v^i_{2},\cdots, v^i_{n_i}$, for each $i$. Then
$K$-coaction on $V$ is determined by
$$\rho(v^i_{t})=\sum_{s=1}^{n_i} v^i_{s}\otimes y^i_{st}$$
for all $t=1,\cdots,n_i$. For a fixed $i$, $\{y^i_{st}\}_{1\leq s,t\leq n_i}$
is a matrix coalgebra of dimension $n_i^2$. Let $Y_i$ be the matrix
$(y^i_{st})_{n_i\times n_i}$, and $Y$ the block matrix $diag(Y_1,Y_2,
\cdots, Y_{\alpha})$. Write the Nakayama automorphism
of $A$ in terms of the basis $\{v^i_j\}$,
$$\mu_A(v^i_j)=\sum_{i,j} m^{ii'}_{jj'} v^{i'}_{j'}.$$

\begin{lemma}
\label{xxlem3.7}
Retain the above notation.
Let $N$ be the order of the automorphism $\eta_{\mu_A^\tau}$.
\begin{enumerate}
\item[(1)]
The matrix ${\mathbb M}^N$ is of the form
$diag (r_1 I_{n_1},r_2 I_{n_2},\cdots, r_\alpha I_{\alpha})$
for some $r_i\in k^\times$.
\item[(2)]
${\mathbb M}$ is diagonalizable.
\item[(3)]
If $V$ is a simple $H$-module, or a simple $K$-comodule, then  ${\mathbb M}^N$
is $r I_{n}$ for some $r\in k^\times$. In this case, the order of $\mu_A$
divides $2l\dim_k K$ where $l$ is the AS index of $A$.
\item[(4)]
If $V$ is a direct sum of 1-dimensional $H$-modules, then
$H$ is commutative.
\end{enumerate}
\end{lemma}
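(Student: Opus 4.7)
The key observation is that formula \eqref{E2.2.4} expresses $\eta_{\mu_A^{\tau}}$ as entrywise conjugation on the coaction matrix $Y=(y_{ij})$; concretely, $\eta_{\mu_A^{\tau}}(Y)=\mathbb{M}^{\tau}Y(\mathbb{M}^{\tau})^{-1}$. Iterating and using that this Hopf automorphism has order $N$, the scalar matrix $C:=(\mathbb{M}^{\tau})^N$ satisfies $CY=YC$. Unpacking this in the block form $Y=\mathrm{diag}(Y_1,\ldots,Y_{\alpha})$ and $C=(C_{ij})$ yields $C_{ij}Y_j=Y_iC_{ij}$ for every $i,j$, which is exactly the condition that the linear endomorphism of $V$ determined by $C$ is a morphism of right $K$-comodules. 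By Schur's lemma over the algebraically closed field $k$, the comodule endomorphism ring of $V=\bigoplus V_i$ is a product of matrix algebras indexed by the isotypic components: on $(V_i,V_j)$-blocks with $V_i\not\cong V_j$ the map is zero, and on an isotypic component $V_i^{\oplus m}$ it has the form $\phi\otimes\mathrm{id}_{V_i}$ for some $\phi\in M_m(k)$. Since $C$ has finite order, so does each $\phi$, so $\phi$ is diagonalizable in characteristic zero; after adjusting the basis of each multiplicity space to diagonalize $\phi$ one arrives at $\mathbb{M}^N=\mathrm{diag}(r_1I_{n_1},\ldots,r_{\alpha}I_{n_{\alpha}})$, proving (1). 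Part (2) is then immediate, since over $k$ (algebraically closed, characteristic zero) any invertible $N$th root of a diagonalizable matrix is diagonalizable: its minimal polynomial divides $\prod_i(x^N-r_i)$, a product of separable polynomials with pairwise coprime roots.

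For part (3), simplicity of $V$ forces $\alpha=1$, so $\mathbb{M}^N=rI_n$ for a single scalar $r\in k^\times$. Because $A$ is generated in degree one, this means $\mu_A^N=\xi_r$ on all of $A$. Taking homological determinants and invoking the standard fact $\hdet(\mu_A)=1$ for noetherian AS regular algebras (see \cite{RRZ1}), one gets $r^l=\hdet(\xi_r)=\hdet(\mu_A)^N=1$, so $\mathrm{ord}(r)$ divides the AS index $l$. Combining with the bound $N\mid 2\dim_k K$ recorded after Theorem \ref{xxthm2.2} yields $\mathrm{ord}(\mu_A)\mid N\cdot\mathrm{ord}(r)\mid 2l\dim_k K$.

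For part (4), a $1$-dimensional $H$-module is the same as a character of $H$, equivalently a grouplike element of $K=H^{\circ}$. Thus when $V$ is a direct sum of $1$-dimensional $H$-submodules, in a compatible basis the coaction matrix $Y$ is diagonal with grouplike entries $g_i:=y_{ii}$. Inner faithfulness forces $K$ to be generated as a Hopf algebra by $\{g_1,\ldots,g_n\}$, and since products of grouplikes are grouplikes, $K$ coincides with the group algebra $kG$ where $G=\langle g_1,\ldots,g_n\rangle$. Hence $H\cong (kG)^{\ast}\cong k^G$ is commutative as an algebra.

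The main obstacle in this plan is the Schur-lemma step in (1) when isomorphic simple summands occur with multiplicity: the comodule endomorphism in each isotypic block is then an arbitrary element of a matrix algebra, and turning it into the scalar form $r_iI_{n_i}$ requires diagonalizing on the multiplicity space and re-choosing the basis of $V$ inside each isotypic component accordingly. A secondary, more minor point is the appeal to $\hdet(\mu_A)=1$ in (3); this is a standard homological fact for AS regular algebras but deserves an explicit citation.
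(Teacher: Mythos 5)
Your proof is correct and follows the same core strategy as the paper: from the finite order of $\eta_{\mu_A^\tau}$ one gets that $(\mathbb{M}^\tau)^N$ commutes with the block-diagonal coaction matrix $Y$, Schur's lemma gives the block-scalar form, diagonalizability of $\mathbb{M}$ follows from separability of $\prod_i(x^N-r_i)$ in characteristic zero, and part (3) is the identical $\hdet(\mu_A)=1$, $\hdet(\xi_r)=r^l$, $N\mid 2\dim_k K$ computation. Two points of divergence are worth recording. First, the ``fact from linear algebra'' the paper invokes for (1) --- that any matrix commuting with $Y$ is $\mathrm{diag}(r_1I_{n_1},\dots,r_\alpha I_{n_\alpha})$ --- is literally true only when the simple summands $V_i$ are pairwise non-isomorphic; when a simple occurs with multiplicity the commutant on that isotypic block is a full matrix algebra over $k$, and one must use the finite order of $(\mathbb{M}^\tau)^N$ to diagonalize the multiplicity-space endomorphism and re-choose the decomposition of $V$ into simples accordingly. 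You identify and repair exactly this gap, so your treatment of (1) is actually more complete than the paper's. Second, for (4) the paper argues on the $H$-side: one-dimensional modules factor through the abelianization $H/[H,H]$, whose kernel is a Hopf ideal annihilating $A$, so inner faithfulness forces $[H,H]=0$; you instead argue on the dual side, identifying the diagonal entries of $Y$ as grouplikes generating $K$ as a group algebra, whence $H=K^\ast$ is commutative. The two arguments are dual to one another and equally short; no gap in either.
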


\begin{proof} (1) Since $N$ be the order of the automorphism
$\eta_{\mu_A^\tau}$, ${\mathbb M}^N$ commutes with $Y$.

The assertion follows from the following fact from linear algebra:
if $X$ is a matrix in $M_{n}(k)$ such that
$X Y=YX$, then $X=diag (r_1 I_{n_1},r_2 I_{n_2},\cdots, r_\alpha I_{\alpha})$,
for some $r_i\in k^\times$.

(2) Since $diag (r_1 I_{n_1},r_2 I_{n_2},\cdots, r_\alpha I_{\alpha})$ is
diagonalizable, so is ${\mathbb M}$.

(3) The first assertion follows from part (1) by taking $\alpha=1$.
Let $g$ be the automorphism $\mu_A^N$. Then $g$ is graded trivial
sending $a\to r^{\deg a} a$ for all $a\in A$. So $\hdet g= r^l$
\cite[Lemma 5.3(a)]{RRZ1}. By \cite[Theorem 5.3]{RRZ2}, $\hdet \mu_A=1$,
so $\hdet g=1$. Thus $r^l=1$. Combining with the fact that $N$ divides
$2l\dim_k K$, we obtain the assertion.

(4) Since $A_{-1}$ is a direct sum of 1-dimensional
$H$-module, $H/[H,H]$ acts on $A_{-1}$. Since the $H$-action is inner
faithful, $[H,H]=0$ or $H$ is commutative.
\end{proof}

\section{Nakayama automorphism commutes with all automorphisms}
\label{xxsec4}

We first briefly recall the definition of Hochschild cohomology.
Let $M$ be an $A$-bimodule, or equivalently, a left $A^e$-module
where $A^e:=A\otimes A^{op}$. Consider the cochain complex
\begin{equation}
\label{E4.0.1}\tag{E4.0.1}
\qquad\qquad\qquad\qquad\qquad\qquad\qquad\qquad
\qquad\qquad\qquad\qquad
\end{equation}
$$C_A(M):
0\to M\xrightarrow{d_1}\Hom_k(A,M)\xrightarrow{d_2} \Hom_k(A^{\otimes 2},M)
\xrightarrow{d_3}\Hom_k(A^{\otimes 3},M)\to \cdots,$$
where $d_n=\sum_{i=0}^{n} (-1)^i\partial_i$, and for any
$f\in \Hom_k(A^{\otimes n-1}, M)$,
$$(\partial_i)(f)(a_1,\cdots,a_n)=\begin{cases}
a_1 f(a_2,\cdots, a_n), & {\text{if}} \quad i=0,\\
f(a_1,\cdots, a_{i}a_{i+1}, \cdots, a_n) & {\text{if}} \quad 1\le i\le n-1,\\
f(a_1,\cdots, a_{n-1})a_{n}, & {\text{if}} \quad i=n.\\
\end{cases}$$

The $n$-th Hochschild cohomology of $A$ with coefficients in $M$ is
$$H^n(A,M)={\rm{H}}^n (C_A(M)).$$
As $A$ is free over $k$, one also has that $H^n(A,M)\cong
\Ext^n_{A^e}(A,M)$. A special case is when 
$M=A^e$ as an $A^e$-bimodule (and also an $A$-bimodule). Then
$H^n(A,M)$ is a right $A^e$-module, which can also be viewed
naturally as $A$-bimodules. We need the following lemma.

\begin{lemma}
\label{xxlem4.1}
Let $\sigma: A\to B$ be an isomorphism of algebras. Then $\sigma$
 induces a $k$-linear isomorphism $\Phi^n: H^n(A,A\otimes A^{op})\to
H^n(B,B\otimes B^{op})$ such that $\Phi^n( a_1 f a_2)=\sigma(a_1)
\Phi^n(f)\sigma(a_2)$ for all $a_1,a_2\in A$.\end{lemma}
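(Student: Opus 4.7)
The plan is to transport Hochschild cochains along $\sigma$, exploiting the fact that $H^n(A,-)$ is naturally functorial in both variables. Form $\tilde\sigma:=\sigma\otimes\sigma^{op}:A\otimes A^{op}\to B\otimes B^{op}$; because $\sigma$ is an algebra isomorphism, $\tilde\sigma$ is an isomorphism of enveloping algebras, and it is equivariant with respect to both commuting $A$-bimodule structures on $A\otimes A^{op}$ once one pulls back the analogous $B$-bimodule structures on $B\otimes B^{op}$ along $\sigma$. The \emph{outer} structure $a\cdot(u\otimes v)\cdot a':=au\otimes va'$ is the one used by the Hochschild differential (coming from the left $A^e$-module structure on $A\otimes A^{op}$), while the \emph{inner} structure $a_1\cdot(u\otimes v)\cdot a_2:=ua_2\otimes a_1 v$ is the one that survives to give the $A$-bimodule action on $H^n(A,A\otimes A^{op})$ referred to in the text.

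On cochains, the definition is
$$\Phi^n(f)(b_1,\ldots,b_n)\;:=\;\tilde\sigma\bigl(f(\sigma^{-1}(b_1),\ldots,\sigma^{-1}(b_n))\bigr)$$
for $f\in\Hom_k(A^{\otimes n},A\otimes A^{op})$. A direct term-by-term check shows $\Phi^n$ commutes with each face map $\partial_i$: for the middle faces ($1\le i\le n-1$) one uses that $\sigma^{-1}$ is multiplicative, $\sigma^{-1}(b_ib_{i+1})=\sigma^{-1}(b_i)\sigma^{-1}(b_{i+1})$, while for $\partial_0$ and $\partial_n$ one uses the outer-bimodule equivariance of $\tilde\sigma$, namely $\tilde\sigma(au\otimes v)=\sigma(a)\,\tilde\sigma(u\otimes v)$ and $\tilde\sigma(u\otimes va)=\tilde\sigma(u\otimes v)\,\sigma(a)$. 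Hence $\Phi^n$ is a cochain map. Applying the same construction to $\sigma^{-1}:B\to A$ produces a two-sided inverse at the cochain level, so $\Phi^n$ descends to a $k$-linear isomorphism on cohomology.

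The bimodule identity then comes for free from inner-bimodule equivariance of $\tilde\sigma$, which reads $\tilde\sigma(ua_2\otimes a_1v)=\sigma(u)\sigma(a_2)\otimes\sigma(a_1)\sigma(v)$:
$$\Phi^n(a_1fa_2)(b_1,\ldots,b_n)=\tilde\sigma\bigl(a_1\cdot f(\sigma^{-1}(b_1),\ldots,\sigma^{-1}(b_n))\cdot a_2\bigr)=\sigma(a_1)\,\Phi^n(f)(b_1,\ldots,b_n)\,\sigma(a_2).$$
I do not expect a genuine obstacle: the content of the lemma is pure functoriality of Hochschild cohomology under algebra isomorphisms. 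The only point requiring care is to keep the outer and inner $A^e$-actions on $A\otimes A^{op}$ separated throughout the verification, because the first governs the differential in the Hochschild complex while the second encodes the $A$-bimodule structure appearing in the conclusion.
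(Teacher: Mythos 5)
Your proposal is correct and follows essentially the same route as the paper: both define the cochain-level map $f\mapsto(\sigma\otimes\sigma)\circ f\circ(\sigma^{-1})^{\otimes n}$, check compatibility with the face maps and with the surviving bimodule action, and pass to cohomology. Your explicit separation of the outer (differential-governing) and inner (surviving) $A^e$-structures is a helpful clarification of a point the paper leaves implicit, but it is the same argument.
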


\begin{proof}
 In fact we construct a map at the level of complexes
$\Psi: C_A(A\otimes A^{op})
\to C_B(B\otimes B^{op})$ such that $\Psi^n( a_1 f a_2)=\sigma(a_1)
\Psi^n(f)\sigma(a_2)$ for all $a_1,a_2\in A$. Then the assertion follows
from taking homology. For any $f\in \Hom_k(A^{\otimes n}, A\otimes A^{op})$,
define $\Psi(f)\in \Hom_k(B^{\otimes n},B\otimes B^{op})$ by
$$\Psi(f)(b_1,\cdots, b_n)=(\sigma\otimes \sigma)(f(\sigma^{-1}(b_1),
\cdots,\sigma^{-1}(b_n))$$
where $\sigma\otimes \sigma$ is the corresponding isomorphism
from $A\otimes A^{op}\to B\otimes B^{op}$.
One can easily show that $\Psi$ commutes with $\partial_i$ for all
$i$. Hence $\Psi$ commutes with the differential. It also easily
follows from the definition that
$\Psi(a_1 f a_2)=\sigma(a_1)\Psi(f)\sigma(a_2)$
for all $a_1,a_2\in A$. The assertion follows by taking
homology.
\end{proof}

As proved in \cite[Lemma 1.2]{RRZ1}, every Artin-Schelter regular
algebra and every Artin-Schelter regular noetherian Hopf algebra
has a Nakayama automorphism, see \cite{BrZ} and \cite[Lemma 1.3]{RRZ1}.
We  now show Theorem \ref{xxthm0.6}. The group of all algebra
automorphisms of $A$ is denoted by $\Aut(A)$.

\begin{theorem}
\label{xxthm4.2}
Let $A$ be an algebra with Nakayama automorphism $\mu$. Let $g\in
\Aut(A)$. Then $g\mu g^{-1}$ is also a Nakayama automorphism of $A$.
If $A^\times$ is in the center of $A$ {\rm{(}}namely, $A$ has no non-trivial
inner automorphisms{\rm{)}}, then $\mu$ commutes with every $g\in \Aut(A)$.
\end{theorem}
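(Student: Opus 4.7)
The plan is to apply Lemma~\ref{xxlem4.1} to the isomorphism $g: A \to A$, transport the resulting equivariance through the defining identification $\Ext^d_{A^e}(A, A^e) \cong {}^1 A^\mu$, and extract a unit $c \in A^\times$ from the $k$-linear isomorphism so obtained. The conclusion will be that $g\mu g^{-1}$ differs from $\mu$ by the inner automorphism $\iota_c$ of conjugation by $c$, which yields both assertions at once.

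Concretely, Lemma~\ref{xxlem4.1} produces a $k$-linear isomorphism $\Phi: H^d(A, A\otimes A^{op}) \to H^d(A, A \otimes A^{op})$ satisfying $\Phi(a_1 f a_2) = g(a_1)\, \Phi(f)\, g(a_2)$ for all $a_1, a_2 \in A$. Identifying $H^d(A, A \otimes A^{op}) \cong \Ext^d_{A^e}(A, A^e) \cong {}^1 A^\mu$ as $A$-bimodules and recalling that the bimodule action on ${}^1 A^\mu$ is $a_1 \cdot m \cdot a_2 = a_1 m \mu(a_2)$, the equivariance becomes
\[
\Phi\bigl(a_1\, m\, \mu(a_2)\bigr) = g(a_1)\, \Phi(m)\, \mu(g(a_2)).
\]
Setting $c := \Phi(1)$ and specializing $m = 1$ with either $a_1 = 1$ or $a_2 = 1$ yields the two formulas $\Phi(a) = g(a)\, c$ and $\Phi(\mu(a)) = c\, \mu(g(a))$. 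Equating the two expressions for $\Phi(\mu(a))$ gives $g(\mu(a))\, c = c\, \mu(g(a))$ for all $a \in A$.

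The main technical step will be verifying that $c$ is a unit in $A$, rather than merely in $\End_k(A)$. Bijectivity of $\Phi$ combined with the formula $\Phi(a) = g(a) c$ forces right multiplication by $c$ on $A$ to be a $k$-linear bijection, so $c$ has a left inverse in $A$; the formula $\Phi(\mu(a)) = c\, \mu(g(a))$ similarly makes left multiplication by $c$ a bijection, producing a right inverse. These one-sided inverses then coincide and give $c \in A^\times$. With this in hand, the identity $g(\mu(a))\, c = c\, \mu(g(a))$ rearranges to $g\mu g^{-1}(b) = c\, \mu(b)\, c^{-1}$ for every $b \in A$, that is, $g\mu g^{-1} = \iota_c \circ \mu$.

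Finally, the bimodules ${}^1 A^{\iota_c \mu}$ and ${}^1 A^\mu$ are isomorphic (via the explicit map $m \mapsto m c^{-1}$, verified by a one-line check), so $g\mu g^{-1}$ also satisfies the defining property of a Nakayama automorphism of $A$. If moreover $A^\times \subseteq Z(A)$, then $\iota_c = \mathrm{id}_A$ for every unit $c$, and the identity collapses to $g\mu g^{-1} = \mu$, i.e.\ $\mu g = g\mu$, for every $g \in \Aut(A)$.
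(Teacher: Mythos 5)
Your proposal is correct and follows essentially the same route as the paper: both apply Lemma~\ref{xxlem4.1} with $B=A$ and $\sigma=g$, transport the equivariance through the identification $\Ext^d_{A^e}(A,A^e)\cong {}^1A^\mu$, extract a unit $c\in A^\times$ (the paper writes $\Phi^d(u)=cu$ for the generator $u$, which is your $c=\Phi(1)$), and conclude that $g\mu g^{-1}=\iota_c\circ\mu$ is again a Nakayama automorphism. Your verification that $c$ is genuinely a unit of $A$ and the explicit bimodule isomorphism ${}^1A^\mu\cong{}^1A^{\iota_c\mu}$ simply spell out details the paper leaves implicit.
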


\begin{proof} Let $d$ and $\mu$ be defined as in \eqref{E1.1.1}. 
Consider $B=A$ and let $\sigma: A\to B=A$ be the map $g$.
By Lemma \ref{xxlem4.1},
$\Phi^d: H^d(A,A\otimes A^{op})\to H^d(A,A\otimes A^{op})$ is a $k$-linear
automorphism such that $\Phi^d(a_1 f a_2)=\sigma(a_1) \Phi^d(f) \sigma (a_2)$
for all $a_1,a_2\in A$. By definition $H^d(A,A\otimes A^{op})\cong A^{\mu}$
with a generator $u$. Then $u':=\Phi^d(u)$ is also an $A$-bimodule
generator. Hence $u'=c u$ for some $c\in A^\times $. Applying $\Phi^d$ to
the equation $\mu(a)u =ua$, we obtain that $g(\mu(a)) u'=u' g(a)$,
or equivalently, $g(\mu(g^{-1}(a))) u'=u' a$ for all $a\in A$. Thus
$g\mu g^{-1}$ is a Nakayama automorphism of $A$.
\end{proof}

\begin{lemma}
\label{xxlem4.3} Let $A$ be an algebra with Nakayama automorphism
$\mu_A$ and such that $A^\times =k^\times$.
Suppose that $\{x_1,\cdots,x_n\}$ is a set of generators of $A$ such that
$\mu_A(x_i)=\lambda_i x_i$ for all $i$ and that the
set of the ordered monomials
$\{x_1^{a_1}\cdots x_n^{a_n}\mid a_1,\cdots,a_n\geq 0\}$
spans the whole algebra $A$.
Assume that $\lambda_1$ cannot be written as $\prod_{j>1} \lambda_j^{b_j}$
for any $b_j\geq 0$.  Then, for any algebra automorphism $g$ of $A$, there
is a $c\in k^\times$ such that $g(x_1)=c x_1$.
\end{lemma}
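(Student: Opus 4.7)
The plan is to combine Theorem \ref{xxthm4.2} (every $g\in\Aut(A)$ commutes with the Nakayama automorphism, since $A^{\times}=k^{\times}$ forces $A^{\times}\subseteq Z(A)$ and hence kills every non-trivial inner automorphism) with an eigenspace analysis of $\mu_A$. First I would apply the commutation relation $\mu_A\circ g=g\circ\mu_A$ to $x_1$, obtaining $\mu_A(g(x_1))=g(\lambda_1 x_1)=\lambda_1\,g(x_1)$. So $g(x_1)$ lies in the $\lambda_1$-eigenspace $E_{\lambda_1}\subseteq A$ of $\mu_A$.

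Next I would identify $E_{\lambda_1}$ using the spanning set of ordered monomials. Because $\mu_A$ is an algebra automorphism with $\mu_A(x_i)=\lambda_i x_i$, each ordered monomial $x_1^{a_1}\cdots x_n^{a_n}$ is a $\mu_A$-eigenvector of eigenvalue $\prod_i\lambda_i^{a_i}$. Since eigenspaces of a $k$-linear operator for distinct eigenvalues are linearly independent, and since the ordered monomials span $A$ by hypothesis, $E_{\lambda_1}$ is spanned by those ordered monomials satisfying $\prod_i\lambda_i^{a_i}=\lambda_1$. Any such monomial has $a_1\geq 1$: otherwise the equation would read $\lambda_1=\prod_{j>1}\lambda_j^{a_j}$ with $a_j\geq 0$, contradicting the hypothesis on $\lambda_1$. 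Hence $E_{\lambda_1}\subseteq x_1 A$, and in particular one may write $g(x_1)=x_1\,u$ for some $u\in A$.

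To conclude I would apply the same reasoning to $g^{-1}\in\Aut(A)$, obtaining $g^{-1}(x_1)=x_1\,v$ for some $v\in A$. Composing,
\[
x_1 \;=\; g(g^{-1}(x_1)) \;=\; g(x_1 v) \;=\; x_1\,u\,g(v), \qquad
x_1 \;=\; g^{-1}(g(x_1)) \;=\; x_1\,v\,g^{-1}(u).
\]
Since $A$ is a domain (as is any noetherian AS regular algebra of interest), cancelling $x_1$ yields $u\,g(v)=1$ and $v\,g^{-1}(u)=1$; applying $g$ to the second identity gives $g(v)\,u=1$, so $u$ is a two-sided unit in $A$ with inverse $g(v)$. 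The hypothesis $A^{\times}=k^{\times}$ then forces $u=c\in k^{\times}$, and therefore $g(x_1)=c\,x_1$.

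The main delicate point is the identification of $E_{\lambda_1}$ in the middle step: the ordered monomials are assumed only to span $A$, not to be linearly independent, so one cannot read off $E_{\lambda_1}$ from a ``basis grouped by eigenvalue''. One instead invokes the eigenspace decomposition $A=\bigoplus_\lambda A_\lambda$ under $\mu_A$ to conclude that every element of $E_{\lambda_1}=A_{\lambda_1}$ is a linear combination of ordered monomials whose own $\mu_A$-eigenvalue equals $\lambda_1$. A minor tacit use of $A$ being a domain is needed at the very end to cancel $x_1$; for the classes of AS regular algebras treated in the paper this is automatic.
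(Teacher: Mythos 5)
Your proof is correct and follows essentially the same route as the paper's: both use Theorem \ref{xxthm4.2} to get $\mu_A g=g\mu_A$, expand $g(x_1)$ in the ordered monomials, use the hypothesis on $\lambda_1$ to kill every monomial with $a_1=0$, and then conclude via $g^{-1}$ and $A^{\times}=k^{\times}$. Your eigenspace-decomposition phrasing of the middle step and your explicit flagging of the tacit cancellation of $x_1$ at the end are slightly more careful than the paper's write-up, but the argument is the same.
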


\begin{proof} Suppose $g(x_1)=\sum c_{a} x_1^{a_1}x_{2}^{a_2}\cdots x_n^{a_n}$
and write it as
$$g(x_1)=\sum_{a_1>0} c_{a^*} x_1^{a_1}x_{2}^{a_2}\cdots x_n^{a_n}
+\sum_{b_1=0} c_{b^*} x_2^{b_2}\cdots x_n^{b_n}$$
where monomials $\{x_1^{a_1}x_{2}^{a_2}\cdots x_n^{a_n}\}_{a^*}
\cup \{x_2^{b_2}\cdots x_n^{b_n}\}_{b^*}$ appeared in the above expression with
nonzero $c_{a^*}$ and $c_{b^*}$ are distinct and linearly
independent. Since $\mu_A g \mu_A^{-1}=g$, we have
$$\begin{aligned}
\lambda_1^{-1}
(\sum_{a_1>0} c_{a^*} (\prod_{j} \lambda_j^{a_i}) &
x_1^{a_1}x_{2}^{a_2}\cdots x_n^{a_n}
+\sum_{b_1=0} c_{b^*} (\prod_{j>1} \lambda_j^{b_i})
x_2^{b_2}\cdots x_n^{b_n})\\
&=
\sum_{a_1>0} c_{a^*} x_1^{a_1}x_{2}^{a_2}\cdots x_n^{a_n}
+\sum_{b_1=0} c_{b^*} x_2^{b_2}\cdots x_n^{b_n}.
\end{aligned}
$$
Since $\lambda_{1}\neq \prod_{j>1} \lambda_j^{b_i}$,
all $c_{b^*}=0$. Thus $g(x_1)=x_1 h$ for some $h\in A$.
Similarly, $g^{-1}(x_1)=x_1 f$ for some $f\in A$. Thus
$h,f\in A^{\times}$. Since $A^\times =k^\times$,
$g(x_1)=c x_1$ for some $c\in k^{\times}$.
\end{proof}

The following proposition is well known. For completeness we give a
short proof using Lemma \ref{xxlem4.3}.

\begin{proposition}
\label{xxpro4.4}
Let $A$ be $A(1)$ where  $p_{ij}$ are generic. Then every algebra automorphism
of $A$ preserves the grading of $A$ and $\Aut(A)=(k^\times )^3$.
\end{proposition}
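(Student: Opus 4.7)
The plan is to apply Lemma \ref{xxlem4.3} to each of the three generators $t_1, t_2, t_3$ of $A(1)$, conclude that every algebra automorphism is diagonal, and then read off $\Aut(A)$.

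First I would verify the standing hypotheses of Lemma \ref{xxlem4.3}. Since $A=k_{p_{ij}}[t_1,t_2,t_3]$ is a noetherian connected graded domain (it has a PBW basis of ordered monomials $t_1^{a_1}t_2^{a_2}t_3^{a_3}$), every unit must lie in degree zero, so $A^\times=k^\times$. By \eqref{E1.5.1}, the Nakayama automorphism acts diagonally on the PBW generators with eigenvalues $\lambda_1=p_{21}p_{31}$, $\lambda_2=p_{12}p_{32}$, $\lambda_3=p_{13}p_{23}$.

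Next I would check, for each $i\in\{1,2,3\}$, that $\lambda_i$ is not a non-negative integer product $\prod_{j\ne i}\lambda_j^{b_j}$. Using $p_{ji}=p_{ij}^{-1}$ and the genericity of the $p_{ij}$ (they generate a free abelian subgroup of $k^\times$), one writes $\lambda_1^{-1}\prod_{j>1}\lambda_j^{b_j}$ as a Laurent monomial in the $p_{ij}$ with integer exponents and observes that the resulting exponent vector cannot vanish for any $b_2,b_3\ge 0$; the other two indices are symmetric. Together with Theorem \ref{xxthm4.2} (which gives $\mu_A g=g\mu_A$ because $A^\times=k^\times$), Lemma \ref{xxlem4.3} now yields scalars $c_1,c_2,c_3\in k^\times$ with $g(t_i)=c_i t_i$. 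In particular $g$ preserves the grading.

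To finish, I would observe that any diagonal assignment $t_i\mapsto c_i t_i$ trivially preserves the skew commutation relations $t_jt_i=p_{ij}t_it_j$, so it extends to an automorphism of $A$. This gives the inclusion $(k^\times)^3\subseteq\Aut(A)$, and combined with the preceding paragraph we obtain equality. The only real point requiring attention is the generic-ness check, which is the elementary Laurent-monomial computation sketched above; everything else is an immediate application of Lemma \ref{xxlem4.3} and Theorem \ref{xxthm4.2}.
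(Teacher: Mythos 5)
Your proposal is correct and follows essentially the same route as the paper: apply Lemma \ref{xxlem4.3} to each generator using the eigenvalues $\lambda_1=p_{21}p_{31}$, $\lambda_2=p_{12}p_{32}$, $\lambda_3=p_{13}p_{23}$ from \eqref{E1.5.1} and the genericity of the $p_{ij}$. The extra details you supply (verifying $A^\times=k^\times$, the Laurent-monomial exponent check, and the converse inclusion $(k^\times)^3\subseteq\Aut(A)$) are exactly the points the paper leaves implicit.
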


\begin{proof} Let $g$ be an algebra automorphism of $A$.

By \eqref{E1.5.1}, the Nakayama automorphism of $A$ is of the form
$$\mu_A: t_1\to \lambda_1 t_1, t_2\to \lambda_2 t_2, t_3\to \lambda_3 t_3$$
where $\lambda_1=p_{21}p_{31}$, $\lambda_2=p_{12}p_{32}$ and $\lambda_3=p_{13}p_{23}$.
Since $p_{ij}$'s are generic, $\lambda_1$ cannot be written as $\lambda_2^{n_2}\lambda_3^{n_3}$.
Applying Lemma \ref{xxlem4.3} to $(x_1,x_2,x_3)=(t_1,t_2,t_3)$,
$g(t_1)=c_1 t_1$. By symmetry, $g(t_2)=c_2 t_2$ and
$g(t_3)=c_3 t_3$ for some $c_i\in k^\times$. The assertion follows.
\end{proof}

\begin{proposition}
\label{xxpro4.5}
Let $A$ be $A(2)$ where  $p$ is not a root of unity. Then every algebra automorphism
of $A$ preserves the grading of $A$ and $\Aut(A)=(k^\times )^2$.
\end{proposition}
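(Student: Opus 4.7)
The plan is to adapt the approach of Proposition \ref{xxpro4.4}, supplemented by a computation of the center of $A$. First I would apply Lemma \ref{xxlem4.3} to the generators $t_2$ and $t_3$. For $A=A(2)$, the eigenvalues from \eqref{E1.5.2} are $\lambda_1=1$, $\lambda_2=p^{-1}$, $\lambda_3=p$, and the ordered monomials $\{t_1^{a_1}t_2^{a_2}t_3^{a_3}\}$ form a $k$-basis of $A$ by the Diamond Lemma. The hypothesis of Lemma \ref{xxlem4.3} for $t_2$ requires that $\lambda_2=p^{-1}$ is not of the form $\lambda_1^a\lambda_3^c=p^c$ with $a,c\geq 0$; this would force $p^{c+1}=1$, impossible since $p$ is not a root of unity. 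The same check works for $t_3$. Since $A$ is a connected graded noetherian domain, $A^\times=k^\times$, so Lemma \ref{xxlem4.3} gives $g(t_2)=c_2 t_2$ and $g(t_3)=c_3 t_3$ for some $c_2,c_3\in k^\times$.

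The new ingredient handles $t_1$, for which $\lambda_1=1$ prevents a direct application of Lemma \ref{xxlem4.3}. The key point is that $t_1$ is central, so $g(t_1)$ must commute with $g(t_2)=c_2 t_2$ and $g(t_3)=c_3 t_3$, and hence with $t_2$ and $t_3$; therefore $g(t_1)\in Z(A)$. I would then establish $Z(A)=k[t_1]$ by a PBW-basis calculation: for $z=\sum c_{abc}t_1^a t_2^b t_3^c$, using the identity $t_3 t_2^b = p^b t_2^b t_3+\tfrac{p^b-1}{p-1}t_1^2 t_2^{b-1}$ and its analogue obtained by moving $t_2$ past $t_3^c$, the conditions $[z,t_2]=[z,t_3]=0$ force $p^c=1$ and $p^b=1$ in the leading terms, so $b=c=0$ since $p$ is not a root of unity. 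Since automorphisms preserve the center, $g$ restricts to an automorphism of $k[t_1]$, hence $g(t_1)=\alpha t_1+\beta$ for some $\alpha\in k^\times$ and $\beta\in k$.

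Finally I would exploit the relation $t_1^2=t_3 t_2-p\, t_2 t_3$ to pin down $\alpha$ and $\beta$. Applying $g$ gives $(\alpha t_1+\beta)^2=c_2c_3(t_3t_2-pt_2t_3)=c_2c_3\, t_1^2$. Comparing coefficients of $t_1^2$, $t_1$, and $1$ in $k[t_1]$ yields $\alpha^2=c_2c_3$, $2\alpha\beta=0$, $\beta^2=0$, so $\beta=0$ and $g(t_1)=\alpha t_1$. Thus $g$ preserves the grading. The automorphism group is then the subgroup of $(k^\times)^3$ cut out by $\alpha^2=c_2c_3$, and the map $(\alpha,c_2)\mapsto(\alpha,c_2,\alpha^2/c_2)$ gives a group isomorphism $(k^\times)^2\cong\Aut(A)$. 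One must also verify that each such triple actually defines an algebra automorphism, which is a routine check on the three defining relations of $A(2)$.

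The main obstacle I anticipate is the center computation: it requires an explicit PBW-basis calculation and careful use of the non-root-of-unity hypothesis on $p$. Everything else is essentially bookkeeping once $Z(A)=k[t_1]$ is identified, since the central relation $t_1^2=t_3t_2-pt_2t_3$ then transfers the full constraint from $t_2,t_3$ to $t_1$ automatically.
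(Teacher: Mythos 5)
Your proof is correct and follows essentially the same route as the paper: apply Lemma \ref{xxlem4.3} (with the ordering $\{t_2,t_1,t_3\}$) to get $g(t_2)=c_2t_2$ and $g(t_3)=c_3t_3$, then use the relation $t_1^2=t_3t_2-pt_2t_3$ to force $g(t_1)$ to be a scalar multiple of $t_1$. Your explicit computation of $Z(A)=k[t_1]$ is a correct but unnecessary detour --- the paper instead deduces $g(t_1)=c''t_1$ directly from $g(t_1)^2=c_2c_3t_1^2$, using that $t_1$ is central and $A$ is a domain.
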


\begin{proof}
By \eqref{E1.5.2}, the Nakayama automorphism of $A$ is given by
$$\mu: t_1\to t_1, \quad t_2\to p^{-1} t_2, \quad
t_3\to p t_3.$$
Let $g$ be any algebra automorphism of $A$. Applying
Lemma \ref{xxlem4.3} to $\{x_1,x_2,x_3\}=\{t_2,t_1,t_3\}$,
we have that $g(t_2)=c t_2$ for some $c\in k^\times$.
Similarly, $g(t_3)= c' t_3$ for some $c'\in k^\times$.
So $g(t_1^2)=cc' t_1^2$. This forces that $g(t_1)=c'' t_1$.
Therefore $g$ preserves the grading of $A$.

Rewrite $g$ as $g(t_1)=c_1 t_1$, $g(t_2)=c_2 t_2$ and $g(t_3)=
c_3 t_3$. Then $c_3=c_1^2 c_2^{-1}$. Hence $\Aut(A)=k^{\times 2}$.
\end{proof}

\begin{proposition}
\label{xxpro4.6}
Let $A$ be $A(6)$ where  $\beta$ is not a root of unity or
the algebra $A(7)$ where $p$ is not a rot of unity. Then
every algebra automorphism of $A$ preserves the grading of
$A$ and $\Aut(A)=(k^\times )^2$.
\end{proposition}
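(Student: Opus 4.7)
The plan is to apply Lemma \ref{xxlem4.3} twice, once to pin down $g(x)$ and once for $g(y)$. The first technical issue is that neither algebra admits a PBW basis in the two original generators alone, since the degree-two piece is $4$-dimensional while there are only three ordered monomials of the form $x^{a}y^{b}$ of degree two. To remedy this I would pass to the enlarged generating set $\{x,z,y\}$ with $z:=yx$; this still generates the algebra as $z\in k\langle x,y\rangle$.

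First, applying Bergman's diamond lemma with the degree-lexicographic order $y>x$ (for which the leading monomials of the defining relations become $yx^{2}$ and $y^{2}x$) produces the standard PBW basis $\{x^{a}(yx)^{b}y^{c}\mid a,b,c\ge 0\}$ of both algebras. Since $\mu_{A}$ is an algebra homomorphism, its eigenvalue on $z$ is forced: for $A(6)$ one has $\mu(z)=(-\beta^{-1})(-\beta)z=z$, while for $A(7)$ one has $\mu(z)=(-p^{-1})(p)z=-z$. With $(x_{1},x_{2},x_{3})=(x,z,y)$ the eigenvalues of $\mu_{A}$ are therefore $(-\beta,1,-\beta^{-1})$ and $(p,-1,-p^{-1})$ respectively. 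I would then verify the multiplicative-independence hypothesis of Lemma \ref{xxlem4.3}: for $A(6)$, $-\beta=1^{b_{2}}(-\beta^{-1})^{b_{3}}$ rearranges to $\beta^{b_{3}+1}=(-1)^{b_{3}+1}$, forcing $\beta$ to be a root of unity, which is excluded; for $A(7)$, $p=(-1)^{b_{2}}(-p^{-1})^{b_{3}}$ gives $p^{b_{3}+1}=\pm 1$, again excluded. Since $A$ is a connected graded noetherian domain one has $A^{\times}=k^{\times}$, so Lemma \ref{xxlem4.3} applies and gives $g(x)=cx$ for some $c\in k^{\times}$.

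Running the symmetric argument with the opposite degree-lexicographic order $x>y$ produces the alternative PBW basis $\{y^{a}(xy)^{b}x^{c}\}$ with auxiliary generator $w:=xy$ and $x_{1}=y$. An identical computation of $\mu_{A}(w)$ (equal to $1$ for $A(6)$ and $-1$ for $A(7)$) together with the analogous root-of-unity exclusions then gives $g(y)=c'y$ for some $c'\in k^{\times}$. Hence every $g\in\Aut(A)$ preserves the grading and restricts to some $(c,c')\in(k^{\times})^{2}$ on the degree-one piece. Conversely, each defining relation of $A(6)$ and $A(7)$ is bihomogeneous in $(x,y)$ of bidegree $(2,1)$ or $(1,2)$, so under $x\mapsto cx,\ y\mapsto c'y$ each relation scales by a single factor and remains zero; thus any such pair gives a genuine automorphism, and $\Aut(A)=(k^{\times})^{2}$.

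The main obstacle is the very first step: one must identify an ordered-monomial basis to feed into Lemma \ref{xxlem4.3}, and neither algebra admits one in the two original generators. The enlargement to $\{x,yx,y\}$ (or $\{y,xy,x\}$ for the symmetric run), together with the observation that $\mu_{A}$ forces the eigenvalue on the auxiliary generator, is what makes the lemma applicable; after that the multiplicative-independence checks are short because equations of the form $\beta^{N}=\pm 1$ or $p^{N}=\pm 1$ with $N\ge 1$ are incompatible with $\beta$ (respectively $p$) not being a root of unity.
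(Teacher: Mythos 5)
Your proof is correct and follows essentially the same route as the paper's: both pass to the generating set $\{x,\,yx,\,y\}$ with the PBW basis $\{x^{a}(yx)^{b}y^{c}\}$, read off the $\mu_A$-eigenvalues $(-\beta,1,-\beta^{-1})$ (resp.\ $(p,-1,-p^{-1})$), and invoke Lemma \ref{xxlem4.3} to get $g(x)=cx$, then argue symmetrically for $y$. You merely spell out the details that the paper leaves as ``well known'' or ``by symmetry'' (the diamond-lemma basis, the multiplicative-independence check, and the converse direction), all of which are verified correctly.
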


\begin{proof} The proof for $A(7)$ is very similar to the proof
for $A(6)$. So we assume $A=A(6)$.

By \eqref{E1.5.6}, the Nakayama automorphism of $A$ is given by
$$\mu: x\to -\beta x, \quad y\to -\beta^{-1} y.$$
Let $g$ be any algebra automorphism of $A$. It is well known that
$A$ has a $k$-linear basis $\{ x^{n_1} (yx)^{n_2} y^{n_3}\mid n_1\geq 0\}$.
Let $\{x_1,x_2,x_3\}=\{x, yx, y\}$. Then $\lambda_1=-\beta$, $\lambda_2=1$
and $\lambda_3=-\beta^{-1}$. Since $\beta$ is not a root of unity,
we can apply Lemma \ref{xxlem4.3} to this situation. Hence $g(x)=cx$
for some $c\in k^\times$. By symmetry, $g(y)= c' y$ for some $c'\in k^\times$.
The assertion follows.
\end{proof}

Next we deal with non-diagonalizable $\mu_A$, when $A$ is either $A(3)$, or
$A(4)$ or $A(5)$. By the way the automorphism group of the Jordan plane
$$k_{J}[t_1,t_2]:=k\langle t_1,t_2\rangle/((t_2+t_1)t_1=t_1t_2)$$
was given in \cite{Sh}. We will compute the automorphism group of
3-dimensional analogues of $k_{J}[t_1,t_2]$.

We consider a couple of subgroups. If $A$ is ${\mathbb Z}$-graded,
we use $\Aut_{gr}(A)$ for the subgroup of automorphisms that
preserving the ${\mathbb Z}$-grading. If $A$ is connected graded, an
automorphism $g\in \Aut(A)$ is called {\it unipotent} if 
$$g(x)=
x+{\text{higher degree terms}}$$ 
for all homogeneous element $x\in A$.
The subgroup of unipotent automorphisms is denoted by $\Aut_{uni}(A)$.
If $I$ is an ideal of $A$ (of codimension $1$),
let $\Aut(I)$ be the subgroup of $\Aut(A)$ consisting of $g$ preserving
$I$. The following lemma is easy and known and the proof is omitted.

\begin{lemma}
\label{xxlem4.7} Let $A$ be a connected graded algebra.
\begin{enumerate}
\item[(1)]
Let $x$ and $y$ be two nonzero elements in $A$ such that
$xy=qyx$ for some $1\neq q\in k$. Then $x,y\in A_{\geq 1}$.
As a consequence, $g(x), g(y)\in A_{\geq 1}$ for all $g\in \Aut(A)$.
\item[(2)]
Let $A$ be generated in degree one. Then
$\Aut(A_{\geq 1})=\Aut_{gr}(A)\ltimes \Aut_{uni}(A)$.
\end{enumerate}
\end{lemma}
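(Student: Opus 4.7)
For part (1), the plan is to decompose $x = x_0 + \tilde{x}$ and $y = y_0 + \tilde{y}$ with $x_0, y_0 \in A_0 = k$ and $\tilde{x}, \tilde{y} \in A_{\geq 1}$, then read off the degree-zero component of $xy - qyx = 0$ to obtain $(1-q)x_0 y_0 = 0$, hence $x_0 y_0 = 0$ since $q \neq 1$. Without loss of generality assume $y_0 = 0$, so that the relation reduces to $(1-q) x_0 \tilde{y} + \tilde{x}\tilde{y} - q\tilde{y}\tilde{x} = 0$. Supposing for contradiction that $x_0 \neq 0$, if $\tilde{x} = 0$ the relation collapses to $(1-q)x_0 \tilde{y} = 0$, forcing $\tilde{y} = 0$; if $\tilde{x} \neq 0$, I would compare the lowest-degree component of $\tilde{y}$ against the fact that $\tilde{x}\tilde{y}$ and $\tilde{y}\tilde{x}$ live in strictly higher degrees (because $\tilde{x} \in A_{\geq 1}$), again forcing the lowest component of $\tilde{y}$ to vanish. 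Either way one reaches $y = 0$, a contradiction. So $x_0 = 0$, and symmetrically $y_0 = 0$. The consequence is then immediate: for $g \in \Aut(A)$, the pair $g(x), g(y)$ satisfies the same skew-commutation relation and is nonzero, so the first part applies.

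For part (2), the essential observation is that since $A$ is generated in degree one we have $A_{\geq n} = (A_{\geq 1})^n$ for all $n$, so any $g \in \Aut(A_{\geq 1})$ automatically preserves the entire descending filtration $\{A_{\geq n}\}_{n \geq 0}$. Identifying the associated graded $\bigoplus_n A_{\geq n}/A_{\geq n+1}$ canonically with $A$ itself, the automorphism $g$ induces a graded algebra automorphism $g_{gr} \in \Aut_{gr}(A)$ by the leading-term rule: for $a \in A_n$, define $g_{gr}(a)$ to be the degree-$n$ component of $g(a) \in A_{\geq n}$. I would then set $u := g_{gr}^{-1} \circ g$ and observe that $u$ still preserves the filtration while inducing the identity on the associated graded, which is exactly the unipotence condition $u(a) = a + \text{(higher degree terms)}$ on homogeneous elements. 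This yields the factorization $g = g_{gr} \circ u$ with $g_{gr} \in \Aut_{gr}(A)$ and $u \in \Aut_{uni}(A)$.

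To assemble the semidirect product structure, the remaining tasks are to verify that $\Aut_{gr}(A) \cap \Aut_{uni}(A) = \{1\}$, which is immediate since a graded unipotent automorphism must fix each $A_n$ pointwise, and that $\Aut_{uni}(A)$ is normal in $\Aut(A_{\geq 1})$, which follows from a direct computation showing that conjugation by a filtration-preserving automorphism sends unipotent corrections to unipotent corrections. The one mildly subtle point, which I would flag as the main obstacle, is confirming that $g_{gr}$ is genuinely a bijection on each $A_n$ (so that it really lies in $\Aut_{gr}(A)$ and can be inverted); this is settled by applying the same associated-graded construction to $g^{-1}$, whose induced graded map provides a two-sided inverse for $g_{gr}$.
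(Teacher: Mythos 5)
Your proof is correct. The paper states this lemma without proof (it is dismissed as ``easy and known''), and your argument---extracting the degree-zero component of $xy-qyx=0$ for part (1), and factoring a filtration-preserving automorphism as its associated-graded part composed with a unipotent part for part (2), with the bijectivity of $g_{gr}$ handled via $\gr(g^{-1})$---is exactly the standard argument the authors evidently had in mind.
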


For most common noncommutative connected graded algebras,
$\Aut(A_{\geq 1})=\Aut(A)$. So the above lemma tells
us that we should work on two subgroups $\Aut_{gr}(A)$
and $\Aut_{uni}(A)$. Let $\phi$ be any $k$-linear map of
$A$, an element $a\in A$ is called a $\phi$-eigenvector 
(associated to an eigenvalue $\lambda$) if
$\phi(a)=\lambda a$.

\begin{lemma}
\label{xxlem4.8}
Let $A$ be an AS regular algebra generated by $A_1$ and
$\mu_A$ be the Nakayama automorphism of $A$. Suppose $\mu_A$
has eigenvalues $\{\lambda_1,\cdots,\lambda_n\}$ with respect
to the basis $\{v_1,\cdots,v_n\}$ of $A_1$. Assume that $\lambda_i
\neq \lambda_j$ for all $i\neq j$, then every graded algebra
automorphism of $A$ is of the form
$$g: v_i \to c_i v_i$$
for some $c_i\in k^\times$. As a consequence, $\Aut_{gr}(A)$
is a subgroup of $(k^\times)^n$, which is abelian.
\end{lemma}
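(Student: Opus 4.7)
The plan is to combine Theorem~\ref{xxthm4.2} with a standard fact from linear algebra. First I would observe that since $A$ is a connected graded AS regular algebra (hence a domain), every unit of $A$ lies in $A_0=k$, so $A^{\times}=k^{\times}\subseteq Z(A)$. The second half of Theorem~\ref{xxthm4.2} then yields $g\circ\mu_A=\mu_A\circ g$ for every algebra automorphism $g$ of $A$, in particular for every $g\in\Aut_{gr}(A)$.

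Next, because $g$ is graded, it restricts to an invertible $k$-linear endomorphism of $V:=A_1$, and the identity $g\mu_A=\mu_A g$ restricts to $V$ as well. In the basis $\{v_1,\ldots,v_n\}$ the operator $\mu_A|_V$ is the diagonal matrix $\mathrm{diag}(\lambda_1,\ldots,\lambda_n)$, and by hypothesis the $\lambda_i$ are pairwise distinct. A standard linear-algebra fact then forces any invertible matrix commuting with $\mu_A|_V$ to be diagonal in the same basis, so $g(v_i)=c_iv_i$ for some $c_i\in k^{\times}$.

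Finally, since $A$ is generated in degree one by $V$, the automorphism $g$ is uniquely determined by its restriction $g|_V$; hence the assignment $g\mapsto(c_1,\ldots,c_n)$ defines an injective group homomorphism $\Aut_{gr}(A)\hookrightarrow(k^{\times})^n$, exhibiting $\Aut_{gr}(A)$ as a subgroup of an abelian group, as desired. The only step requiring care is the assertion $A^{\times}\subseteq Z(A)$; if one wishes to avoid relying on $A$ being a domain, one can instead use only the first half of Theorem~\ref{xxthm4.2} to write $g\mu_A g^{-1}=\mu_A\circ \operatorname{Inn}(c)$ for some $c\in A^{\times}$, and then argue by a degree/grading argument that this inner twist must be trivial because both $\mu_A$ and $g\mu_A g^{-1}$ are graded. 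This mild verification is the only obstacle; once it is in place, the rest of the argument is essentially immediate.
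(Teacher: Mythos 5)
Your argument is essentially the paper's: establish that $g$ commutes with $\mu_A$, note that $g$ preserves $A_1$, and conclude from the pairwise-distinct eigenvalues that $g$ is diagonal in the basis $\{v_1,\dots,v_n\}$; the final step identifying $\Aut_{gr}(A)$ with a subgroup of $(k^\times)^n$ is the same. The one point where you diverge is the source of the commutation $g\mu_A=\mu_A g$. You route it through Theorem~\ref{xxthm4.2}, which requires $A^\times\subseteq Z(A)$, and you justify that by asserting that an AS regular algebra is a domain. That implication is not known in general (it is a theorem of Artin--Tate--Van den Bergh in global dimension $3$ and a conjecture beyond), and your fallback via the first half of Theorem~\ref{xxthm4.2} still leaves you needing to kill an inner automorphism by a unit, which again comes down to controlling $A^\times$. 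The paper avoids this entirely by citing \cite[Theorem 3.11]{RRZ1}, which states directly that $\mu_A$ commutes with every \emph{graded} algebra automorphism of a connected graded algebra admitting a Nakayama automorphism --- exactly the case needed here, with no domain hypothesis. Since the lemma only concerns graded automorphisms, that is the cleaner citation; with it in place, the rest of your argument goes through verbatim.
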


\begin{proof} Since $g$ is graded, $g(v_i)\in A_1$. By
\cite[Theorem 3.11]{RRZ1}, $\mu_A g=g \mu_A$. So $g(v_i)$
is a $\mu_A$-eigenvector associated to the eigenvalue
$\lambda_i$. Since $\lambda_i\neq \lambda_j$ for all $i\neq j$,
$g(v_i)=c_i v_i$ for some $c_i\in k$. The assertion follows.
\end{proof}

Using the similar ideas we have the following.

\begin{lemma}
\label{xxlem4.9}
Let $A$ be an AS regular algebra generated by $\{t_1,t_2,t_3\}$.
\begin{enumerate}
\item[(1)]
Suppose $\mu_A$ maps
$$\mu_A: t_1\to a t_1, \quad t_2\to a t_2+b t_1, \quad t_3\to c t_3$$
where $a,b,c\in k^\times$ and $a\neq c$. Then every graded algebra
automorphism $g$ is of the form
\begin{equation}
\label{E4.9.1}\tag{E4.9.1}
g: t_1\to c_1 t_1, \quad t_2\to c_1 t_2+ c_2 t_1, \quad t_3\to c_3 t_3
\end{equation}
where $c_1, c_3\in k^\times$ and $c_2\in k$.
\item[(2)]
Suppose $\mu_A$ maps
$$\mu_A: t_1\to t_1, \quad t_2\to t_2+b_1 t_1, 
\quad t_3\to t_3+ b_1 t_2+ b_2 t_3$$
where $b_1\in k^\times$ and $b_2\in k$. Then every graded algebra
automorphism $g$ is of the form
\begin{equation}
\label{E4.9.2}\tag{E4.9.2}
g: t_1\to a t_1, \quad t_2\to a t_2+ c_1 t_1, 
\quad t_3\to a t_3+c_1 t_2+c_2 t_1
\end{equation}
where $a\in k^\times$ and $c_1, c_2\in k$.
\item[(3)]
Suppose $q^3\neq 1$. The group $\Aut_{gr}(A(3))$ 
consists of all maps of the form
\eqref{E4.9.1}. If $G$ is a finite subgroup of $\Aut_{gr}(A)$, 
then $G$ is a subgroup
of $(k^\times )^2$.
\item[(4)]
Suppose $p^3\neq 1$. The group $\Aut_{gr}(A(4))$ consists 
of all maps of the form
\eqref{E4.9.1}. If $G$ is a finite subgroup of $\Aut_{gr}(A)$, 
then $G$ is a subgroup
of $(k^\times )^2$.
\item[(5)]
The group $\Aut_{gr}(A(5))$ consists of all maps of the form
\eqref{E4.9.2}. If $G$ is a finite subgroup of $\Aut_{gr}(A)$, 
then $G$ is a subgroup
of $k^\times $.
\end{enumerate}
\end{lemma}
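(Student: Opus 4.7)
The approach is to invoke Theorem \ref{xxthm4.2} to force every graded automorphism $g$ to commute with $\mu_A$, which reduces each statement to a linear-algebra problem on $A_1$.

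First I note that $A$ is a noetherian connected graded domain, so $A^{\times} = k^{\times}$ and Theorem \ref{xxthm4.2} gives $g\mu_A = \mu_A g$ for every $g \in \Aut(A)$. If $g \in \Aut_{gr}(A)$ then $g$ preserves $A_1$, and in the basis $\{t_1,t_2,t_3\}$ the $3 \times 3$ matrix $G$ of $g|_{A_1}$ commutes with the corresponding matrix $M$ of $\mu_A|_{A_1}$.

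For part (1), $M$ is block diagonal with blocks $\bigl(\begin{smallmatrix} a & b \\ 0 & a \end{smallmatrix}\bigr)$ on $\mathrm{span}\{t_1,t_2\}$ and $(c)$ on $kt_3$. Since $a \neq c$, the generalized eigenspace decomposition is preserved by $G$, so $g(t_3)=c_3 t_3$ for some $c_3 \in k^\times$. Because $b \neq 0$, a direct calculation shows that the $2\times 2$ matrices commuting with $\bigl(\begin{smallmatrix} a & b \\ 0 & a \end{smallmatrix}\bigr)$ are exactly those of the form $\bigl(\begin{smallmatrix} c_1 & c_2 \\ 0 & c_1 \end{smallmatrix}\bigr)$, which yields \eqref{E4.9.1}.

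For part (2), since $b_1 \neq 0$ one checks $(M-I)^2 \neq 0$, so the nilpotent $M-I$ has index $3$ and $M$ is a cyclic matrix; its commutant is the polynomial algebra $k[M] = k\cdot I + k\cdot(M-I) + k\cdot(M-I)^2$. Writing $G = \alpha I + \beta(M-I) + \gamma(M-I)^2$ and renaming $a:=\alpha$, $c_1:=\beta b_1$, $c_2:=\beta b_2 + \gamma b_1^2$ yields the form \eqref{E4.9.2}, with $a\in k^\times$ and $c_1,c_2\in k$ free (the substitution $(\beta,\gamma)\mapsto(c_1,c_2)$ is invertible because $b_1\neq 0$).

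For parts (3)--(5), I would feed the explicit Nakayama automorphisms of Section \ref{xxsec1} into (1) and (2). From \eqref{E1.5.3}, $\mu_{A(3)}$ has $(a,b,c)=(q^{-1},-3q^{-1},q^{2})$ and $a\neq c$ is equivalent to $q^3\neq 1$; from \eqref{E1.5.4}, $\mu_{A(4)}$ has $(a,b,c)=(p^{-1},-2p^{-1},p^{2})$ and $a\neq c$ is equivalent to $p^3\neq 1$; from \eqref{E1.5.5}, $\mu_{A(5)}$ fits the shape of (2) with $b_1=-3\neq 0$. I would then verify by direct substitution into the defining relations that any map of the form \eqref{E4.9.1} (resp.\ \eqref{E4.9.2}) with $c_1,c_3\in k^\times$ (resp.\ $a\in k^\times$) is actually an algebra automorphism, so that $\Aut_{gr}$ equals (not merely is contained in) the stated set.

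For the finite-subgroup claims, I would compute powers of $g$ and exploit $\mathrm{char}\,k=0$. In the setting of \eqref{E4.9.1}, induction gives $g^n(t_2)=c_1^n t_2 + n c_1^{n-1} c_2 t_1$; if $g$ has finite order $n$ then $c_1^n=1$ and $n c_1^{n-1} c_2 = 0$, which forces $c_2=0$ (since $c_1\in k^\times$ and $n\neq 0$ in $k$). Thus every torsion element lies in the diagonal torus $\cong (k^\times)^2$. Similarly, for \eqref{E4.9.2} one computes
\[
g^n(t_2) = a^n t_2 + n a^{n-1} c_1 t_1, \qquad
g^n(t_3) = a^n t_3 + n a^{n-1} c_1 t_2 + \bigl(n a^{n-1} c_2 + \tbinom{n}{2} a^{n-2} c_1^2\bigr) t_1;
\]
finite order forces first $c_1=0$ and then $c_2=0$, so every torsion element is the scalar automorphism $t_i\mapsto a t_i$ and every finite subgroup embeds into $k^\times$.

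The only slightly delicate step is the commutant analysis for the $3\times 3$ Jordan block in part (2); the remaining steps are short matrix computations and elementary bookkeeping of powers of $g$.
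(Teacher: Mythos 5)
Your proposal is correct and follows essentially the same route as the paper, whose entire proof is ``use linear algebra and $\mu_A g = g\mu_A$'' (the paper cites the graded version from \cite[Theorem 3.11]{RRZ1}, while you invoke Theorem \ref{xxthm4.2}, which applies since these connected graded domains have $A^\times=k^\times$); your commutant computations, the verification that the listed maps are genuinely automorphisms, and the order-$n$ calculations forcing $c_2=0$ (resp.\ $c_1=c_2=0$) in characteristic zero supply exactly the details the paper omits. You also correctly read the evident typo in part (2), where ``$b_2t_3$'' should be ``$b_2t_1$'' (as Lemma \ref{xxlem3.3}(2) and \eqref{E1.5.5} confirm).
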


\begin{proof} Use linear algebra and
\cite[Theorem 3.11]{RRZ1}, namely, $\mu_A g=g \mu_A$. Details are omitted.
\end{proof}

To prove every algebra automorphism
preserves the grading, we need to show that $\Aut_{uni}(A)$
is trivial. The following lemma is useful in computation.

\begin{lemma}
\label{xxlem4.10}
Let $a, b\in k^\times$ and $c\in k$.
Let $\phi$ be an algebra automorphism of
the Jordan plane $k_{J}[t_1,t_2]$
of the form
$$\phi: t_1\to a t_1, \quad t_2\to a t_2+b t_1.$$
Then any $\phi$-eigenvector in $k_{J}[t_1,t_2]$ is a polynomial of
$t_1$.
\end{lemma}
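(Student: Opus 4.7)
The plan is to reduce to the homogeneous case and then analyze $\phi|_{A_n}$ via its matrix in the standard PBW basis. Write $A = k_J[t_1,t_2]$. Since $\phi$ preserves the $\mathbb{N}$-grading of $A$ (both generators are in degree one), any $\phi$-eigenvector $f$ with eigenvalue $\lambda$ decomposes as $f = \sum_{n\geq 0} f_n$ with $f_n \in A_n$, and $\phi(f_n) = \lambda f_n$ holds separately for each $n$. It therefore suffices to prove that every $\phi$-eigenvector in $A_n$ is a scalar multiple of $t_1^n$; it will then follow that $f = \sum_n c_n t_1^n \in k[t_1]$.

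To analyze $\phi|_{A_n}$, I use the PBW basis $\{t_1^{n-j}t_2^j : 0\leq j \leq n\}$ of $A_n$, ordered by increasing $j$. Since $\phi(t_1^{n-j}t_2^j) = a^{n-j}\,t_1^{n-j}(at_2+bt_1)^j$, and since moving a factor of $t_1$ leftward past $t_2$ using the relation $t_2 t_1 = t_1 t_2 - t_1^2$ strictly decreases the $t_2$-degree, the PBW expansion of $(at_2 + bt_1)^j$ is $a^j t_2^j$ plus terms of strictly smaller $t_2$-degree. Consequently the matrix of $\phi|_{A_n}$ in this ordered basis is upper triangular with every diagonal entry equal to $a^n$, so $a^n$ is the only eigenvalue. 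To pin down the eigenspace, I compute the superdiagonal: by a short induction on $j$ based on the recursion $(at_2+bt_1)^{j+1} = (at_2+bt_1)^j(at_2+bt_1)$ combined with the PBW expansion $t_2^{k} t_1 = t_1 t_2^k + (\text{terms of } t_2\text{-degree} < k)$, one checks that the coefficient of $t_1 t_2^{j-1}$ in $(at_2+bt_1)^j$ equals $ja^{j-1}b$. Thus the superdiagonal entry in column $j$ of $\phi|_{A_n}$ equals $a^{n-j}\cdot ja^{j-1}b = ja^{n-1}b$, which is nonzero for $1 \le j \le n$ since $a,b\in k^\times$.

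Therefore $\phi|_{A_n} - a^n I$ is upper triangular with zero diagonal and nonzero superdiagonal, so its rank equals $n$; its kernel is one-dimensional and spanned by $t_1^n$. This proves that every $\phi$-eigenvector in $A_n$ is a scalar multiple of $t_1^n$, which combined with the first paragraph gives the lemma. The only technical step is the recursive computation of the superdiagonal coefficient $ja^{j-1}b$; once this is in hand, the rest is linear algebra, and the hypothesis $ab \ne 0$ is used precisely here to ensure the superdiagonal does not vanish.
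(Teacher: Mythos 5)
Your proof is correct and follows essentially the same route as the paper's: both reduce to the homogeneous case and expand $\phi(t_1^{d-s}t_2^s)$ in the PBW basis, the key point in each being the sub-leading coefficient $s\,a^{d-1}b\neq 0$ (your superdiagonal entry). You package this as a strictly upper triangular matrix with nonzero superdiagonal plus a rank count, whereas the paper reads off a contradiction directly from the top nonzero $t_2$-degree term of a putative eigenvector, but the computational content is identical.
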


\begin{proof} Let $f$ be a $\phi$-eigenvector. Since $\phi$ is a graded
algebra automorphism, we may assume that $f$ is homogeneous of degree
$d$. Write $f=\sum_{i=0}^d c_i t_1^{d-i} t_2^i\neq 0$. Let $s$ be the integer
such that $c_{s}\neq 0$ and $c_{i}=0$ for all $i>s$. It remains to show that
$s=0$. If not, we assume $c_s=1$ and then $f=t_1^{d-s} t_2^s+ c_{s-1} t_1^{d-s+1}t_2^{s-1}+\ldt$
where $\ldt$ is (any element) of the form $\sum_{i<s-1} c_i t_1^{d-i}t_2^i$.
Since $f$ is a $\phi$-eigenvector, we have
$$\begin{aligned}
\lambda f&=\phi(f)=(at_1)^{d-s} (at_2+bt_1)^s+ c_{s-1} (at_1)^{d-s+1}(at_2+bt_1)^{s-1}+\ldt\\
&=a^d t_1^{d-s} t_2^s+s a^{d-1}b t_1^{d-s+1} t_2^{s-1}+a^d c_{s-1} t_1^{d-s+1}t_2^{s-1}+\ldt.
\end{aligned}
$$
Hence $\lambda =a^d$, $s a^{d-1}b=0$, which yield a contradiction.
Therefore $s=0$ and the assertion follows.
\end{proof}

\begin{proposition}
\label{xxpro4.11}
Let $A$ be an AS regular domain generated by $\{t_1,t_2,t_3\}$. Assume
that $\{t_1^{n_1}t_2^{n_2}t_3^{n_3}\mid n_i\geq 0\}$ is a
$k$-linear basis of $A$ and that the subalgebra generated by $t_1,t_2$
is the Jordan plane with relation $(t_2+t_1)t_1=t_1t_2$ and that
$t_3 t_1=at_1 t_3$ for some $a\in k^\times$.
\begin{enumerate}
\item[(1)]
Suppose $\mu_A$ maps
$$\mu_A: t_1\to a t_1, \quad t_2\to a t_2+b t_1, \quad t_3\to c t_3$$
where $a$ is not a positive power of $c$ and $c$ is not a positive
power of $a$ and
both $a$ and $c$ are not roots of unity.
Then every unipotent algebra automorphism is the identity on $t_1$ and $t_3$.
\item[(2)]
Suppose $q$ is not a root of unity. Then $\Aut(A(3))=\Aut_{gr}(A(3))$
which consists of all maps of the form \eqref{E4.9.1}.
If $G$ is a finite subgroup
of $\Aut(A)$, then $G$ is a subgroup of $(k^\times )^2$.
\item[(3)]
Suppose $q$ is not a root of unity. The center of $A(3)$ is $k$.
\item[(4)]
Suppose $p$ is not a root of unity. Then  $\Aut(A(4))=\Aut_{gr}(A(4))$
which consists of all maps of the form \eqref{E4.9.1}.
If $G$ is a finite subgroup
of $\Aut(A)$, then $G$ is a subgroup of $(k^\times )^2$.
\item[(5)]
Suppose $p$ is not a root of unity. The center of $A(4)$ is $k$.
\end{enumerate}
\end{proposition}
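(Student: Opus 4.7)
The plan is to exploit Theorem~\ref{xxthm4.2}, which applies here because $A$ is a connected graded domain (so $A^\times = k^\times$), to conclude that every $g \in \Aut(A)$ commutes with $\mu_A$. Combined with Lemma~\ref{xxlem4.10} applied to the Jordan subalgebra, this sharply restricts the form of $g(t_1)$ and $g(t_3)$; the defining relations of $A$ then eliminate the remaining freedom.

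For part~(1), let $g \in \Aut_{uni}(A)$ and set $f_1 = g(t_1)$, $f_3 = g(t_3)$. Writing $f_1 = \sum_i h_i t_3^i$ in the PBW basis with $h_i \in k[t_1,t_2]$, the identity $\mu_A f_1 = a f_1$ shows each $h_i$ is a $\mu_A|_{k[t_1,t_2]}$-eigenvector with eigenvalue $a c^{-i}$; Lemma~\ref{xxlem4.10} then gives $h_i \in k[t_1]$, and since $\mu_A$ acts on $k[t_1]$ by scalar multiplication by powers of $a$, we obtain $h_i = \gamma_i t_1^{d_i}$ with $a^{d_i - 1} = c^{-i}$. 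The hypotheses on $a,c$ rule out $d_i = 0$ for $i \geq 1$ (which would mean $a = c^i$, a positive power of $c$) and $d_i = 1$ for $i \geq 1$ (which would force $c^i = 1$), so combined with unipotency we find $f_1 = t_1 + \sum_{i \geq 1} \gamma_i t_1^{d_i} t_3^i$ with $d_i \geq 2$; a symmetric analysis yields an analogous shape for $f_3$. The relation $t_3 t_1 = (\text{scalar}) t_1 t_3$ translates to $f_3 f_1 = (\text{scalar}) f_1 f_3$, which when expanded in the PBW basis produces linear equations on the coefficients $\gamma_i, \delta_j$ at each degree. Coupling these with the Jordan-plane relation $(f_2 + f_1) f_1 = f_1 f_2$ and the third defining relation (into which $f_2 = g(t_2)$ enters as a rank-two generalized $\mu_A$-eigenvector whose free parameters are controlled by $f_1$ via an entirely parallel analysis), a simultaneous degree-by-degree induction forces all $\gamma_i$ ($i \geq 1$) and $\delta_j$ ($j \geq 2$) to vanish, giving $g(t_1) = t_1$ and $g(t_3) = t_3$.

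For parts~(2) and (4), the skew-commuting pair $\{t_1, t_3\}$ lets Lemma~\ref{xxlem4.7}(1) give $\Aut(A) = \Aut(A_{\geq 1})$, hence $\Aut(A) = \Aut_{gr}(A) \ltimes \Aut_{uni}(A)$ by Lemma~\ref{xxlem4.7}(2). Part~(1) reduces unipotent $g$ to fixing $t_1$ and $t_3$; substituting $g(t_1) = t_1$ into the Jordan-plane relation yields $[t_1, g(t_2)] = t_1^2$, and combining with the third defining relation and the non-root-of-unity hypothesis on $q$ (respectively $p$) forces $g(t_2) = t_2$. Thus $\Aut_{uni}(A)$ is trivial, $\Aut(A) = \Aut_{gr}(A)$ as in Lemma~\ref{xxlem4.9}(3,4), and the finite-subgroup claim follows because in characteristic zero the shear parameter $c_2$ in \eqref{E4.9.1} has infinite additive order. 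For parts~(3) and (5), expanding a central $z$ in the PBW basis and imposing successively $[t_3, z] = 0$ and $[t_1, z] = 0$ yields a cascade of linear constraints on the coefficients; the non-root-of-unity hypothesis makes the relevant scalar factors invertible, forcing every coefficient with nontrivial multidegree to vanish, so $Z(A) = k$.

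The main obstacle is the inductive step in part~(1): the single relation $f_3 f_1 = (\text{scalar}) f_1 f_3$ is underdetermined, leaving one free scalar per admissible degree, so rigidity is obtained only by invoking all three defining relations simultaneously while tracking the free parameters of $f_2$ in parallel. The bookkeeping is intricate but linear, and the non-root-of-unity hypothesis is exactly what makes the relevant matrix coefficients invertible at each stage.
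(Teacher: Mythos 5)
Your overall framework (use Theorem~\ref{xxthm4.2} to make $g$ commute with $\mu_A$, then apply Lemma~\ref{xxlem4.10} to the Jordan-plane components) matches the paper's, but there is a genuine gap at the decisive step of part~(1). Your eigenvalue analysis correctly reduces $g(t_1)$ to $t_1+\sum_{i\ge 1}\gamma_i t_1^{d_i}t_3^i$ with $a^{d_i-1}=c^{-i}$, but the hypotheses on $a$ and $c$ do \emph{not} eliminate these terms: they only exclude $d_i\in\{0,1\}$, and admissible exponents with $d_i\ge 2$ really do exist in the cases the proposition is applied to (for $A(3)$ one has $a=q^{-1}$, $c=q^2$, so $d_i=2i+1$ satisfies the constraint for every $i\ge 1$). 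You acknowledge this and propose to kill the surviving coefficients by a ``simultaneous degree-by-degree induction'' over all three defining relations, tracking the free parameters of $g(t_2)$ in parallel — but you never carry this out, and as you yourself note the first relation alone is underdetermined at each degree. Asserting that the intricate linear bookkeeping closes is not a proof; this is precisely the point where the argument must be completed.

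The paper closes the argument with a different and much shorter idea that your proposal is missing: a divisibility-plus-unit argument. After Lemma~\ref{xxlem4.10} forces $g(t_1)$ into the subalgebra generated by $t_1$ and $t_3$, one reorganizes it as $\sum_n t_1^n h_n(t_3)$ and observes that the $t_1$-free part $h_0(t_3)$ would be a $\mu_A$-eigenvector for the eigenvalue $a$, which is impossible since $a$ is neither $1$ nor a positive power of $c$; hence $g(t_1)=t_1h$ for some $h\in A$. The same reasoning applied to $g^{-1}$ gives $g^{-1}(t_1)=t_1h'$, and then $t_1=g(t_1h')=t_1\,h\,g(h')$ forces $h\,g(h')=1$ in the domain $A$, so $h\in A^\times=k^\times$ and unipotency gives $h=1$. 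The analogous argument (organizing by powers of $t_3$ and killing the $t_3$-free part, using that $c$ is neither $1$ nor a positive power of $a$) gives $g(t_3)=t_3$. No induction over the relations is needed, and the residual cases your eigenvalue computation cannot exclude never arise. I recommend replacing your deferred induction with this unit argument; the rest of your outline for parts (2)--(5) is then essentially in line with the paper, modulo the same level of sketchiness in the center computation.
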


\begin{proof} (1) Since $t_3$ is a $\mu_A$-eigenvector associated
to $c$, so is $v: = g(t_3)$ by Theorem \ref{xxthm4.2}.
Write $v=\sum_{n\geq 0} f_n(t_1,t_2) t_3^n$. Then
$\mu_A(v)=\sum_{n\geq 0} \mu_A(f_n)c^n t_3^n$. So each $f_i$
is a $\mu_A$-eigenvector. By Lemma \ref{xxlem4.10},
$f_0= w_0 t_1^{n_0}$. So $f_0$ is a $\mu_A$-eigenvector associated
to $a^{n_0}$. So $c=a^{n_0}$. By the hypothesis, $c$ is not a
power of $a$. So $f_0=0$, consequently, $v=h t_3$ for some $h\in A$.
Similarly, $g^{-1}(t_3)=h' t_3$. Then $h$ and $h'$ are units and
whence $g(t_3)=t_3$.

Since $t_1$ is a $\mu_A$-eigenvector associated
to $a$, so is $w: = g(t_1)$ by Theorem \ref{xxthm4.2}.
Let $w=g(t_1)$ and write $w=\sum_{n\geq 0} f_n(t_1,t_2) t_3^n$
by recycling the notation from the last paragraph. For
each $n$, $f_n t_3^n$ is a $\mu_A$-eigenvector.
In particular, $f_n$ is a $\mu_A$-eigenvector. By
Lemma \ref{xxlem4.10},  $f_n =w_n t_1^{d_n}$.
So $w$ is generated by $t_1$ and $t_3$.
Now we can write $w=\sum t_1^{n} h_n(t_3)$.
Then $h_0(t_3)$ is a $\mu_A$-eigenvector associated
to $a$. If $h_0\neq 0$,  this is impossible as $a$ is not a power of
$c$. So $h_0=0$ and $g(t_1)=w=t_1 h$. Similarly, $g^{-1}(t_1)
t_1 h'$. Then $h$ and $h'$ are units and
whence $g(t_1)=t_1$.

Now let $u=g(t_2):= t_2+h$. Since $\mu_A(t_2) =at_2+b t_1$,
one sees that $h$ is a $\mu_A$-eigenvector. So by Lemma
\ref{xxlem4.10}, $h$ is generated by $t_1$ and $t_3$.
Applying $g$ to the relation $t_2t_1=t_1t_2-t_1^2$, one sees that
$ht_1=t_1h$. By the relation $t_3 t_1=at_1t_3$, we have
$h\in k[t_1]_{\geq 2}$ (since $g$ is unipotent).

In the following proof, $A$ is either $A(3)$ or $A(4)$ with
parameter not a root of unity.

(2,4) One can easily show that $\Aut(A)=\Aut(A_{\geq 1})$.
By Lemma \ref{xxlem4.7}, it suffices to show that
$g\in \Aut_{uni}(A)$ is the identity.
By part (1), $g(t_1)=t_1, g(t_3)=t_3$ and $g(t_2)=t_2+h$
where $h\in k[t_1]_{\geq 2}$. Then the third relation implies
that $h=0$. The assertion follows.

(3,5) Let $f$ be the center of $A$. Write $f=\sum_{n\geq 0}
f_n t_3^n$ where $f_n$ is in the subalgebra generated by
$t_1$ and $t_3$. Since $f$ is a $\mu_A$-eigenvector, by the
form of $\mu_A$, each $f_n$ is a  $\mu_A$-eigenvector.
By Lemma \ref{xxlem4.10}, $f_n\in k[t_1]$. So $f$
is in the subalgebra generated by $t_1$ and $t_3$.
Since we have $t_3t_1=q t_1t_3$ in $A(3)$ and
$t_3t_1=pt_1t_3$ in $A(4)$, $f$ commutes with both
$t_1$ and $t_3$ if and only if $f\in k$. Therefore
$f\in k$ and the center of $A$ is trivial.
\end{proof}

\section{Automorphisms of $A(5)$}
In this section let $A$ be the algebra $A(5)$ defined in
the introduction. The relations \eqref{E0.3.5} are equivalent
to the following relations
\begin{equation}
\label{E5.0.1}\tag{E5.0.1}
t_i (t_j-t_{j-1})=t_j (t_i-t_{i-1})
\end{equation}
for all $1\leq i<j \leq 3$, where $t_0=0$ by convention.
We also set $\deg t_i=1$ for all $i=1,2,3$.

Given a ${\mathbb Z}$-graded  algebra $C$ and  a graded
algebra automorphism $\tau$ of $C$, the (right) graded twist of
$C$ associated to $\tau$, denoted by $C^{\tau}$, is defined as
follows: as a graded $k$-vector space, $C^{\tau}=C$, the
multiplication $\ast$ of $C^{\tau}$ is given by
\begin{equation}
\label{E5.0.2}\tag{E5.0.2}
 f \ast g= f \tau^{\deg f}(g)\in C=C^\tau
\end{equation}
for all homogeneous elements  $f,g\in C=C^\tau$. We use slightly
different notation from Section \ref{xxsec1}. From now on let
$C$ be the polynomial ring $k[t_1,t_2,t_3]$.

\begin{lemma}
\label{xxlem5.1} The algebra $A$ is {\rm{(}}isomorphic to{\rm{)}}
the graded twist $C^{\sigma}$ where $\sigma$ is the graded
algebra automorphism of $C$ sending $t_i$ to $\sum_{j=1}^i t_j$
for all $i=1,2,3$.
\end{lemma}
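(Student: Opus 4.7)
The plan is to exhibit a concrete identification between the generators of $A(5)$ and those of $C^{\sigma}$ and then check that the defining relations \eqref{E0.3.5} of $A(5)$ are precisely the ``twisted'' images of the commutativity relations of $C$.

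First I would unpack the multiplication $\ast$ on $C^{\sigma}$ given by \eqref{E5.0.2}: for homogeneous $f$ of degree one and any $g$, we have $f\ast g=f\cdot\sigma(g)$, where $\cdot$ is the (commutative) product in $C$. Equivalently, $f\ast\sigma^{-1}(g)=f\cdot g$ whenever $\deg f=1$. Since $\sigma(t_i)=t_1+t_2+\cdots+t_i$, a direct inversion yields
\[
\sigma^{-1}(t_i)=t_i-t_{i-1}\qquad (1\le i\le 3,\ t_0:=0).
\]
Applying this to the commutativity relation $t_i t_j=t_j t_i$ of $C$ (for $1\le i<j\le 3$) produces the $\ast$-identity
\[
t_i\ast (t_j-t_{j-1})=t_j\ast(t_i-t_{i-1})\quad\text{in }C^{\sigma},
\]
which is precisely the system \eqref{E5.0.1}, i.e.\ the defining set of relations of $A(5)$.

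Next I would define a graded algebra homomorphism $\Phi\colon A(5)\to C^{\sigma}$ on generators by $t_i\mapsto t_i$. The previous paragraph shows the relations of $A(5)$ lift to identities in $C^{\sigma}$, so $\Phi$ is well-defined. Because $(C^{\sigma})_1=C_1=kt_1+kt_2+kt_3$ generates $C^{\sigma}$ as a $k$-algebra under $\ast$, the map $\Phi$ is surjective.

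It remains to promote surjectivity to an isomorphism, and this is the step that needs the least trivial input. The standard fact about graded twists is that $C$ and $C^{\sigma}$ share the same underlying graded vector space, hence the same Hilbert series $(1-s)^{-3}$. On the $A(5)$ side, $A(5)$ is one of the AS regular algebras of global dimension three classified by Artin--Schelter--Tate--Van den Bergh and, being generated in degree one with three quadratic relations of the appropriate form, has Hilbert series $(1-s)^{-3}$ as well. Comparing Hilbert series in each graded piece forces $\ker\Phi=0$, so $\Phi$ is an isomorphism. The main obstacle is exactly this last point: one must cite (or verify) that the presentation \eqref{E0.3.5} gives an algebra with the polynomial Hilbert series, which is where the AS regularity of $A(5)$ enters; everything else is a mechanical translation via $\sigma^{-1}(t_i)=t_i-t_{i-1}$.
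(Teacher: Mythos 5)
Your translation step is identical to the paper's: you compute $\sigma^{-1}(t_i)=t_i-t_{i-1}$ and observe that the commutativity relations of $C$ become, under $\ast$, exactly the relations \eqref{E5.0.1} of $A(5)$. Where you diverge is in how the isomorphism is concluded. The paper simply invokes the standard fact about graded twists (cited to [Zh]): a graded twist of a connected graded algebra generated in degree one by quadratic relations is again presented by the same generators and the correspondingly twisted relations, so $C^{\sigma}$ \emph{is} the algebra presented by \eqref{E5.1.1}$=$\eqref{E5.0.1}, and the identification is immediate with no surjection-plus-injectivity argument needed. You instead build a surjection $\Phi\colon A(5)\to C^{\sigma}$ and kill its kernel by comparing Hilbert series. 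That is logically sound, but it requires the extra input that the presentation \eqref{E0.3.5} yields Hilbert series $(1-s)^{-3}$, which you source from the AS regularity of $A(5)$. Be careful here: within this paper the natural justification that $A(5)$ is AS regular with that Hilbert series is precisely its realization as a graded twist of the polynomial ring (this is how Section \ref{xxsec1} treats $A(5)$), so quoting regularity at this point risks circularity. To make your route self-contained you would need an independent verification of the Hilbert series — e.g.\ a PBW basis for \eqref{E0.3.5} via the diamond lemma, or a direct check of the Artin--Schelter--Tate--Van den Bergh classification conditions. The paper's route via the presentation-twisting property of graded twists sidesteps this issue entirely, which is what it buys over yours; your route, in exchange, uses only the elementary fact that twisting preserves the underlying graded vector space.
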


\begin{proof} By definition \cite{Zh} or \eqref{E5.0.2}, the
graded twist $C^{\sigma}$ of the commutative ring $C$, with new
multiplication $\ast$, is generated by $t_1,t_2,t_3$ and subject
to the relations
\begin{equation}
\label{E5.1.1}\tag{E5.1.1}
t_i \ast \sigma^{-1}(t_j)=t_i t_j = t_j t_i=t_j \ast \sigma^{-1}(t_i)
\end{equation}
for all $i<j$. Since $\sigma^{-1}(t_i)=
t_i-t_{i-1}$ for all $i=2,3$ and $\sigma(t_1)=t_1$.
So \eqref{E5.1.1} agrees with \eqref{E5.0.1}.
Therefore $A = C^{\sigma}$.
\end{proof}

Let $\phi$ be a $k$-linear endomorphism of some vector space,
say $W$. An element $f\in W$ is said to be a $\phi$-eigenvector
if $\phi(f)=c f$ for some $c\in k$, and $W$ is a $\phi$-invariant
if $\phi(f)=f$, namely $f$ is a $\phi$-eigenvector associated to
eigenvalue $1$.

\begin{lemma}
\label{xxlem5.2} Let $A$ and $C$ be defined as above.
\begin{enumerate}
\item[(1)]
Let $f\in C$. Then $f$ is a $\sigma$-eigenvector if and only if it
is $\sigma$-invariant.
\item[(2)]
$t_1\in C$ is $\sigma$-invariant.
\item[(3)]
$y_2:=t_2^2+ t_1 t_2-2 t_1 t_3\in C$ is a $\sigma$-invariant.
\item[(4)]
If $f\in k[t_1,t_2]$ is $\sigma$-invariant, then $f\in k[t_1]$.
As a consequence, $k[t_1,t_2]^{\langle \sigma\rangle} =k[t_1]$.
\item[(5)]
Every $\sigma$-invariant is generated by
$t_1$ and $y_2$. As a consequence,
$C^{\langle \sigma\rangle} =k[t_1,y_2]$.
\end{enumerate}
\end{lemma}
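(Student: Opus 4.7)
For (1), observe that with respect to the basis $t_1,t_2,t_3$ of $C_1$ the matrix of $\sigma$ is upper triangular with $1$'s on the diagonal, so $\sigma$ is unipotent on $C_1$ and hence on every graded piece $C_d=\operatorname{Sym}^d C_1$. Decomposing a $\sigma$-eigenvector into homogeneous components then forces the eigenvalue to be $1$. Parts (2) and (3) are immediate expansions using $\sigma(t_2)=t_1+t_2$ and $\sigma(t_3)=t_1+t_2+t_3$; in particular, for (3) a straightforward check gives $\sigma(t_2^2+t_1t_2-2t_1t_3)=t_2^2+t_1t_2-2t_1t_3$.

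For (4), I would write a homogeneous $\sigma$-invariant $f\in k[t_1,t_2]$ of degree $d$ as $f=\sum_{i=0}^d c_i\, t_1^{d-i}t_2^i$, apply $\sigma$, and compare coefficients of $t_2^i$ on the two sides. Starting from the top power and using that $\operatorname{char} k=0$, each $c_i$ with $i\ge 1$ is forced to vanish, so $f\in k[t_1]$. This is essentially the commutative analogue of Lemma \ref{xxlem4.10}.

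For (5), the plan is a Hilbert-series comparison. Since $k[t_1,y_2]\subseteq C^{\langle\sigma\rangle}$ by (2)-(3), and $t_1,y_2$ are algebraically independent (degrees $1$ and $2$, with the $t_2^2$ summand of $y_2$ ruling out any relation), $k[t_1,y_2]$ is a polynomial ring with Hilbert series $\frac{1}{(1-s)(1-s^2)}=\sum_d(\lfloor d/2\rfloor+1)s^d$. It therefore suffices to show $\dim_k C_d^{\langle\sigma\rangle}=\lfloor d/2\rfloor+1$ for all $d$. Because $\sigma$ is unipotent and $\operatorname{char} k=0$, I can write $\sigma=\exp(\delta)$ for the locally nilpotent derivation of $C$ given by $\delta(t_1)=0$, $\delta(t_2)=t_1$, $\delta(t_3)=t_2+\tfrac{1}{2}t_1$, so $C^{\langle\sigma\rangle}=\ker\delta$. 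The change of variables $u_1=t_1,\,u_2=t_2,\,u_3=t_3-\tfrac{1}{2}t_2$ puts $\delta$ in the standard form $\delta(u_1)=0,\delta(u_2)=u_1,\delta(u_3)=u_2$, identifying $C_1$ with the three-dimensional irreducible $\mathfrak{sl}_2$-module $V_2$ and $C$ with $\operatorname{Sym}(V_2)$. Standard $\mathfrak{sl}_2$-theory gives $\operatorname{Sym}^d V_2\cong\bigoplus_{k=0}^{\lfloor d/2\rfloor}V_{2(d-2k)}$, and each irreducible summand contributes a one-dimensional space of highest-weight vectors (i.e., of $\delta$-invariants), yielding the required count.

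The main obstacle is the dimension count in (5). If one prefers to avoid invoking representation theory, an equivalent route is to verify by induction on $d$ that the highest-weight vector of each summand is $u_1^{d-2k}y_2^k$ (with $y_2=u_2^2-2u_1u_3$ in the new basis) for $0\le k\le\lfloor d/2\rfloor$, thereby producing an explicit basis of $\ker\delta\cap C_d$ and completing the proof without appealing to abstract $\mathfrak{sl}_2$-theory.
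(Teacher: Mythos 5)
Your parts (1)--(4) are correct and essentially identical to the paper's proof: unipotence of $\sigma$ on each graded piece for (1), direct computation for (2)--(3), and the same leading-coefficient-in-$t_2$ argument for (4). The real divergence is in (5), where your argument is correct but genuinely different. The paper stays elementary: given a homogeneous $\sigma$-invariant $f$ of degree $d$, it multiplies by $t_1^d$ so that $g=t_1^df$ becomes a polynomial in $t_1,t_2,t_1t_3$, substitutes $t_1t_3=\tfrac12(t_2^2+t_1t_2-y_2)$, applies part (4) to the coefficients $g_i(t_1,t_2)$ of the powers of $y_2$, and then argues via the leading term in $t_2$ that the resulting exponents $d-2i$ of $t_1$ are nonnegative, so that $f$ itself (not merely $t_1^df$) lies in $k[t_1,y_2]$. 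Your route instead reduces everything to the single identity $\dim_k C_d^{\langle\sigma\rangle}=\lfloor d/2\rfloor+1$, obtained by writing $\sigma=\exp(\delta)$ (so $C^{\langle\sigma\rangle}=\ker\delta$, using that $\exp(\delta)-1=\delta\circ u$ with $u$ unipotent), normalizing $\delta$ to the principal nilpotent $u_3\mapsto u_2\mapsto u_1\mapsto 0$, and invoking the decomposition $\operatorname{Sym}^dV_2\cong\bigoplus_{k=0}^{\lfloor d/2\rfloor}V_{2(d-2k)}$ together with $\dim\ker(e|_{V_{2j}})=1$. What your approach buys is that it sidesteps the paper's slightly delicate ``no negative powers of $t_1$'' step and makes the structure transparent (the invariants are the covariant leading terms of a binary quadratic, generated by the form $u_1$ and its discriminant $u_2^2-2u_1u_3=y_2$); what it costs is the import of $\mathfrak{sl}_2$-theory, whereas the paper's argument is entirely self-contained polynomial manipulation. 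Your fallback of exhibiting $u_1^{d-2k}y_2^k$ as explicit highest-weight vectors would still need an upper bound on $\dim\ker(\delta|_{C_d})$ to close the argument without representation theory, but as stated your main route is complete.
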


\begin{proof}
(1) Let $V=\oplus_{i=1}^3 kt_i$. Then $\sigma$ is
unipotent on $V$. Since $C$ is generated by $V$,
$\sigma$ is unipotent on the homogeneous part
of $C$ of degree $d$ for any $d$. Thus, the only
eigenvalue of $\sigma$ on $C$ is 1. So
any $\sigma$-eigenvector is $\sigma$-invariant.

(2) Clear.

(3) By direct computation.

(4) Since $\sigma$ preserves the degree, we may assume
that $f$ is homogeneous. Let $f=\sum_{i=0}^s a_i t_1^{d-i} t_2^i$
where $a_i\in k$ for all $i=0,\cdots, s$ and $a_s\neq 0$ for some
$s\leq d$. We claim
that $s=0$. Since $f$ is $\sigma$-invariant,
$$\begin{aligned}
0&=\sigma(f)-f=\sum_{i=0}^s a_i t_1^{d-i} ((t_1+t_2)^{i}-t_2^{i})\\
&=a_s t_1^{d-s} (s t_1 t_2^{s-1}+\ldt)+\ldt\\
\end{aligned}
$$
where $\ldt$ means some polynomials of $t_2$-degree
less than $s-1$. So we have $a_s s=0$. Since $a_s
\neq 0$, $s=0$ as desired. So $f\in k[t_1]$. The consequence
is clear.

(5) First of all, $t_1$ and $y_2$ are algebraically independent and
elements in $k[t_1,y_2]$ are $\sigma$-invariants by parts (2,3).

Now let $f$ be a $\sigma$-invariant. Again we may assume
that $f$ is homogeneous. Let $g=t_1^d f$ where $d$ is
the degree of $f$. Then $g$ is a polynomial in $t_1, t_2$
and $t_1t_3$.  Write $t_1t_3$ as $\frac{1}{2} (t_2^2+t_1t_2-y_2)$.
Then $g$ is a polynomial in $t_1,t_2,y_2$.
Write $g=\sum_{i=0}^d g_i(t_1,t_2)y_2^i$ where
$g_i(t_1,t_2)$ is a polynomial in $t_1$ and $t_2$ for
each $i$. Since $g,y_2$ are $\sigma$-invariant, by using the
fact that $t_1,t_2,y_2$ are algebraically independent,
each $g_i(t_1,t_2)$ is $\sigma$-invariant. By part (4), $g_i(t_1,t_2)
=c'_i t_1^{d_i}$ where $c'_i\in k$. Since the degree of $g$ is $2d$,
we may write  $g=\sum_{i=0}^s c_i t_1^{2d-2i} y_{2}^i$
where $c_i\in k$ and $c_s\neq 0$. Thus $f=\sum_{i=0}^s c_i t_1^{d-2i} y_{2}^i$
and we need to show that $d-2i\geq 0$ for any $i$ with $c_i\neq 0$.
Considering $g$ as a polynomial in $t_2$ and using the fact $y_2=
t_2^2+t_1t_2-2t_1t_3$, the coefficient of the leading term
of $f$ is $c_s t_1^{d-2s}$. Thus $d-2s\geq 0$ and $f$ is a
polynomial in $t_1$ and $y_2$.
\end{proof}

Let $\tau$ be an algebra automorphism of an algebra
$B$ and let $f$ be a nonzero element in $B$. We say $f$ is $\tau$-normal
if $fx= \tau(x)f $ for all $x\in B$.
By the relations \eqref{E5.0.1} or \eqref{E0.3.5},
$t_1$ in $A$ is a $\sigma$-normal element.

\begin{lemma}
\label{xxlem5.3}
Let $B$ be a ${\mathbb Z}$-graded domain. If $f\in B$ is a $\tau$-normal
element for some automorphism $\tau$, then $\tau$ is a graded algebra
automorphism of $B$. As a consequence, every nonzero homogeneous component
of $f$ is $\tau$-normal.
\end{lemma}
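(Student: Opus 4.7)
The plan is to exploit the $\mathbb{Z}$-grading together with the domain hypothesis to force $\tau$ to preserve degrees by comparing top and bottom degree terms of the normality equation. Decompose $f = f_m + f_{m+1} + \cdots + f_n$ into homogeneous components with $f_m,f_n$ both nonzero. For any nonzero homogeneous $x \in B$ of degree $d$, the element $\tau(x)$ is nonzero (since $\tau$ is an automorphism), so write $\tau(x) = g_{j_{\min}} + \cdots + g_{j_{\max}}$ with $g_{j_{\min}}, g_{j_{\max}}$ its lowest and highest nonzero homogeneous components. I then want to read off constraints on $j_{\min}, j_{\max}$ from $fx = \tau(x)f$.

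The key step is this: the top degree homogeneous component of $fx$ is $f_n x$, which is nonzero because $B$ is a domain, and sits in degree $n+d$. On the other side, the top degree component of $\tau(x) f$ is $g_{j_{\max}} f_n$, again nonzero by the domain hypothesis, sitting in degree $j_{\max}+n$. Equating forces $j_{\max} = d$. The identical argument applied to the bottom degree components, using that $f_m x \neq 0$ and $g_{j_{\min}} f_m \neq 0$, yields $j_{\min} = d$. Hence $\tau(x)$ is homogeneous of degree $d$. Since this holds for every nonzero homogeneous $x$, the automorphism $\tau$ preserves the $\mathbb{Z}$-grading, proving the main claim.

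For the consequence, now that $\tau$ is graded, compare the homogeneous components on both sides of $fx = \tau(x)f$ for a homogeneous $x$ of degree $d$: the degree $i+d$ part of $fx$ is $f_i x$, and the degree $i+d$ part of $\tau(x) f$ is $\tau(x) f_i$. Thus $f_i x = \tau(x) f_i$ for each $i$. Extending by $k$-linearity in $x$ (and noting both sides make sense for arbitrary, not necessarily homogeneous, $x$), we conclude that every nonzero homogeneous component $f_i$ is $\tau$-normal.

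There is no real obstacle here; the argument is elementary. The only point needing care is ensuring products of nonzero homogeneous elements remain nonzero when identifying the leading and trailing degree terms of each side, which is precisely where the domain hypothesis on $B$ is used.
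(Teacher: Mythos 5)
Your argument is correct and is essentially the paper's own proof: the paper phrases the comparison of extreme terms via ``upper degree'' and ``lower degree'' of each side of $fx=\tau(x)f$, which is exactly your identification of leading and trailing homogeneous components, with the domain hypothesis guaranteeing additivity of these degrees under multiplication. The derivation of the consequence by matching the degree $i+d$ components is also the intended (and correct) completion of the paper's ``the assertion follows.''
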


\begin{proof}
For each element $x\in B$ we can define lower degree $l.\deg(x)$ and
upper degree $u.\deg(x)$. Then $x$ is a homogeneous element if and only if
$l.\deg(x)=u.\deg(x)=\deg(x)$. Since $f$ is $\tau$-normal,
$ f x=\tau(x) f$. Then
$$u.\deg(f)+u.\deg(x)=u.\deg (fx)=u.\deg(\tau(x)f)=u.\deg(\tau(x))
+u.\deg(f).$$
Hence $u.\deg(x)=u.\deg(\tau(x))$. Similarly, $l.\deg(x)=l.\deg(\tau(x))$.
This implies that $x$ is homogeneous if and only if $\tau(x)$ is homogeneous
of the same degree. So the assertion follows.
\end{proof}

If $G$ is a subgroup of $\Aut(B)$, the fixed subring of $B$ under
$G$-action is denoted by $B^G$. The following lemma is well known.

\begin{lemma}
\label{xxlem5.4} Let $B$ be a graded algebra and $\tau$ a graded algebra
automorphism. Let $G$ be a subgroup of $\Aut_{gr}(B)$ such that $g \tau=\tau g$
for all $g\in G$. Then $G$ is naturally a subgroup of $\Aut_{gr}(B^\tau)$ and
the fixed subring $(B^\tau)^G$ is a graded twist of $(B^G)^\tau$.
\end{lemma}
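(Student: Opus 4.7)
The plan is to verify everything directly from definitions, since this lemma is essentially a bookkeeping statement about how taking invariants interacts with graded twisting. First, I would verify that $G$ acts on $B^\tau$ by graded algebra automorphisms. Since $B^\tau = B$ as graded vector spaces, each $g \in G$ is already a graded $k$-linear automorphism of $B^\tau$; the content is to check compatibility with the twisted product $\ast$ defined by \eqref{E5.0.2}. For homogeneous $f,h \in B$,
$$g(f \ast h) \;=\; g\bigl(f\,\tau^{\deg f}(h)\bigr) \;=\; g(f)\,g\bigl(\tau^{\deg f}(h)\bigr) \;=\; g(f)\,\tau^{\deg f}(g(h)) \;=\; g(f) \ast g(h),$$
using that $g$ is an algebra map of $B$, that $g$ commutes with $\tau$ (hence with every power of $\tau$), and that $g$ is graded so $\deg g(f)=\deg f$. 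Thus $G$ embeds naturally into $\Aut_{gr}(B^\tau)$.

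Next, I would observe that $\tau$ preserves $B^G$: for any $g \in G$ and $f \in B^G$, $g(\tau(f)) = \tau(g(f)) = \tau(f)$, so $\tau(f) \in B^G$. Hence $\tau$ restricts to a graded automorphism of $B^G$, and the twist $(B^G)^{\tau}$ is well-defined.

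Finally, I would compare the two algebras. The underlying graded $k$-vector spaces of $(B^\tau)^G$ and $(B^G)^{\tau}$ both equal $B^G$: for the first, the $G$-action on $B^\tau$ is pointwise the same as on $B$, so the invariants coincide with $B^G$; for the second, the underlying space of a graded twist is by definition unchanged. The multiplication of homogeneous $f,h \in B^G$ in each case is $f\,\tau^{\deg f}(h)$ computed inside $B$. So the two algebras coincide on the nose, which is trivially a graded twist (by the identity automorphism of $B^G$). The main, and quite modest, obstacle is keeping the three multiplications (in $B$, in $B^\tau$, and in the respective invariant subrings) straight and verifying that the degree function used to perform the twist on $B^G$ is simply the restriction of the degree on $B$, so that no reindexing is needed and the formulas line up verbatim.
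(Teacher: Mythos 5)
Your proposal is correct and follows essentially the same route as the paper's proof: both use $B^\tau=B$ as graded vector spaces, the identity $g(f\ast h)=g(f)\ast g(h)$ coming from $g\tau=\tau g$ and $\deg g(f)=\deg f$, and then the observation that $(B^\tau)^G$ and $(B^G)^\tau$ have the same underlying space $B^G$ and the same multiplication $f\,\tau^{\deg f}(h)$. The paper states the conclusion as the equality $(B^\tau)^G=(B^G)^\tau$ of graded algebras, exactly as you obtain.
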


\begin{proof} As a graded vector space $B^\tau=B$. Let $\cdot$ (respectively
$\ast$) be the multiplication of $B$ (respectively, $B^\tau$). Let $g$ be
a $k$-linear graded automorphism of $B=B^\tau$. Since
$x \ast y=x \cdot \tau^{\deg x}(y)$ and since $g\in G$ commutes with
$\tau$, $g$ is an algebra endomorphism of $B$ if and only if it
is an algebra endomorphism of $B^\tau$.
Therefore $G$ is a subgroup of $\Aut_{gr}(B^\tau)$.
As a graded vector space, it is clear that $(B^\tau)^G=B^G$.
Since $\tau$ commutes with $G$, $\tau$ is naturally
a graded automorphism of $B^G$ by restriction.
By comparing multiplications of $(B^\tau)^G$ and $B^G$,
one sees that $(B^\tau)^G=(B^G)^\tau$ as a graded algebra.
\end{proof}

Considering $y_2=t_2^2+t_1t_2-2t_1t_3\in C$ as an element
in $A$, we have
$$
y_2=t_2\ast t_2 +2t_1\ast t_2-2t_1 \ast t_3
$$
where $\ast$ is the multiplication in $A=C^\sigma$. When $\ast$
is omitted,
\begin{equation}
\label{E5.4.1}\tag{E5.4.1}
y_2=t_2^2+2t_1t_2-2t_1t_3\in A.
\end{equation}

\begin{lemma}
\label{xxlem5.5}
Let $\sigma$ be the automorphism of $C$ and $A$ determined by
sending $t_i$ to $\sum_{j=1}^i t_j$ for all $i=1,2,3$.
\begin{enumerate}
\item[(1)]
The Nakayama automorphism of $A$ is $\sigma^{-3}$.
\item[(2)]
Fix any integer $d$, then $f\in A$ is $\sigma^d$-invariant, or a
$\sigma^d$-eigenvector, if and only if $f\in C$ is a $\sigma^d$-invariant.
\item[(3)]
If $\tau$ is a graded algebra automorphism of $A$, then $\tau(t_1)=c t_1$
for some $c\in k^\times$.
\item[(4)]
If $f\in A$ is a nonzero $\tau$-normal element for some automorphism $\tau$,
then $\tau=\sigma^d$ and $f$ is homogeneous of degree $d$.
\item[(5)]
An element $f\in A$ is a normal element if and only if $f$ is homogeneous
and $\sigma$-invariant. As a consequence,
normal elements in $A$ are precisely these homogeneous
elements in $A^{\langle \sigma \rangle}=C^{\langle \sigma \rangle}$.
\item[(6)]
The subspace 
$kt_1$ consists of all possible elements $f$ with the properties:
$f$ is $\tau$-normal and $\tau^3=\mu_{A}^{-1}$. As a consequence,
$kt_1$ is preserved by any algebra automorphism of $A$.
\item[(7)]
Any algebra automorphism of $A$ commutes with $\sigma$.
\item[(8)]
Any algebra automorphism of $A$ becomes a graded algebra automorphism
when restricted to $k[t_1,y_2]$.
\end{enumerate}
\end{lemma}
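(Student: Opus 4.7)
The plan is to prove the eight parts in order, bootstrapping from the explicit formula for $\mu_A$ to a characterization of $\tau$-normal elements and then to a grip on arbitrary algebra automorphisms of $A$.

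First I dispose of (1)--(3). For (1), I verify directly that iterating $\sigma^{-1}\colon t_i\mapsto t_i-t_{i-1}$ three times reproduces \eqref{E1.5.5}, so $\mu_A=\sigma^{-3}$. Part (2) is essentially tautological: $A$ and $C$ share the same underlying graded vector space, $\sigma$ acts on that space by the same linear map, and $\sigma|_V$ is unipotent, so $\sigma^d$ is unipotent on each graded piece; hence $\sigma^d$-eigenvectors and $\sigma^d$-invariants coincide in both rings, exactly as in Lemma~\ref{xxlem5.2}(1). For (3), any $\tau\in\Aut_{gr}(A)$ commutes with $\mu_A$ by \cite[Theorem~3.11]{RRZ1}, so $\tau(t_1)\in V$ must be fixed by $\sigma^{-3}$; an explicit matrix calculation on $V$ shows the $\sigma^{-3}$-fixed subspace is exactly $kt_1$.

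The main step is (4). Lemma~\ref{xxlem5.3} forces $\tau$ to be graded and every nonzero homogeneous component of $f$ to be $\tau$-normal, so it suffices to show that any homogeneous $\tau$-normal element $g$ of degree $d$ satisfies $\tau=\sigma^d$ and $\sigma(g)=g$. Expanding $g\ast x=\tau(x)\ast g$ via \eqref{E5.0.2} yields the commutative identity
\[
g\cdot\sigma^d(x)=\tau(x)\cdot\sigma(g)
\]
in $C$. Taking $x=t_1$ (using $\sigma(t_1)=t_1$ and $\tau(t_1)=ct_1$ from (3)) forces $\sigma(g)=c^{-1}g$; by Lemma~\ref{xxlem5.2}(1) this gives $c=1$ and $\sigma(g)=g$, and substituting back produces $\tau(x)=\sigma^d(x)$ for all $x\in V$, hence $\tau=\sigma^d$. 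Because $\sigma|_V$ has infinite order, $d$ is determined by $\tau$, so all nonzero homogeneous parts of $f$ share a common degree and $f$ is homogeneous. Part (5) follows immediately: the forward direction is (4), and the converse is a one-line twist computation showing a homogeneous $\sigma$-invariant of degree $d$ in $C$ is automatically $\sigma^d$-normal in $A$.

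For (6)--(8) I invoke Theorem~\ref{xxthm4.2}, which applies because $A$ is a connected graded domain and hence $A^\times=k^\times\subseteq Z(A)$, so every $\phi\in\Aut(A)$ satisfies $\phi\mu_A\phi^{-1}=\mu_A$. In (6), combining (4) with $\tau^3=\mu_A^{-1}=\sigma^3$ and the infinite order of $\sigma$ forces $d=1$, and the degree-one $\sigma$-invariants are $kt_1$ by the computation used in (3); the consequence follows because $\phi(t_1)$ is $\phi\sigma\phi^{-1}$-normal with $(\phi\sigma\phi^{-1})^3=\phi\sigma^3\phi^{-1}=\mu_A^{-1}$. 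In (7), $\phi(t_1)=ct_1$ is simultaneously $\sigma$-normal and $\phi\sigma\phi^{-1}$-normal, so the uniqueness of the normalizing automorphism of a regular element in a domain forces $\phi\sigma\phi^{-1}=\sigma$. Part (8) combines (5), which identifies $k[t_1,y_2]=A^{\langle\sigma\rangle}$ (hence stable under $\phi$ by (7)), with (6) (giving $\phi(t_1)\in A_1$) and (4) applied to $\phi(y_2)$, which is $\sigma^2$-normal and therefore homogeneous of degree~$2$. The only real obstacle is the translation step in (4); once the twist formula is used to move the normality equation into the commutative ring $C$, everything else reduces to linear algebra of $\sigma^d|_V$ together with Theorem~\ref{xxthm4.2}.
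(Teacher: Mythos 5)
Your proof is correct and follows essentially the same route as the paper: the same direct verification for (1)--(2), the same twist-formula manipulation $g\ast x = g\,\sigma^{\deg g}(x)$ evaluated at $x=t_1$ to force $\sigma$-invariance and $\tau=\sigma^d$ in (4)--(5), and the same use of Theorem \ref{xxthm4.2} together with the $\tau$-normality of $\phi(t_1)$ for (6)--(8). The only cosmetic difference is in (3), where you invoke commutation with $\mu_A$ and compute the fixed space of $\sigma^{-3}$ on $A_1$, while the paper appeals to $t_1$ being the unique degree-one normal element; both are valid.
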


\begin{proof}
(1) By direct computation, we have $\sigma^3(t_1)=t_1, 
\sigma^3(t_2)=3t_1+t_2$ and $\sigma^3(t_3)=6t_1+3t_2+t_3$
since $\sigma(t_i)=\sum_{j=1}^i t_j$ for all $i=1,2,3$.  
Hence  $\sigma^{3}=\mu_A^{-1}$ by  \eqref{E1.5.5}.

(2)  Let $G$ be the subgroup of $\Aut_{gr}(C)$ generated
by $\sigma^d$. By the proof of Lemma \ref{xxlem5.4},
$C^G=A^G$. The assertion follows.

(3) This follows from the fact that $t_1$ is the only normal
element in degree 1.

(4) By Lemma \ref{xxlem5.3}, $\tau$ is a graded algebra
automorphism and there is a nonzero homogeneous $\tau$-normal
component of $f$, say $f_d$, of degree $d$. Then
$$\tau(t_i) \ast f_d=f_d \ast t_i=f_d \sigma^d(t_i)
=\sigma^d(t_i) f_d=\sigma^d(t_i)\ast
\sigma^{-1} (f_d).$$
Let $i=1$ in above, we have
$$c t_1\ast f_d=\tau(t_1)\ast f_d=t_1\ast \sigma^{-1}(f_d)$$
for some $c\in k^\times$. So $f_d$ is a $\sigma$-eigenvector, so a
$\sigma$-invariant. Using the fact $f_d$ is a $\sigma$-invariant,
the equation now becomes
$$\tau(t_i) \ast f_d= \sigma^d(t_i) \ast f_d$$
or $\tau(t_i)=\sigma^d(t_i)$ for all $i$.

(5) One implication is part (4). For the other implication,
let $f$ be a homogeneous $\sigma$-invariant. Then, for all
$y\in A=C$,
$$ f\ast y= f \sigma^{\deg f} (y)=\sigma^{\deg f}(y) f=
\sigma^{\deg f}(y) \ast f$$
which implies that $f$ is normal.

(6) If $f$ is $\tau$-normal, by part (4), $\tau=\sigma^d$ for some $d$
and $f$ is homogeneous of degree $d$. If $\tau^3=\mu_{A}^{-1}=\sigma^3$,
then $d=1$ (as $\sigma$ has infinite order). Hence $f$ has degree 1,
and one can easily check that the only degree 1 normal element is $kt_1$.

Let $\phi$ be any algebra automorphism of $A$ and let $f=\phi(t_1)$.
Then $f$ is $\tau$-normal with $\tau=\phi \sigma \phi^{-1}$. Since
$\phi$ commutes with $\mu_{A}^{-1}=\sigma^3$ [Theorem 0.6], we have
$\tau^3=\sigma^{3}=\mu_{A}^{-1}$. By the above paragraph, $f\in kt_1$.
Therefore $\phi$ preserves $kt_1$.

(7) Let $\phi$ be an algebra automorphism of $A$. By part (6),
$\phi(t_1)=ct_1$ for some $c\in k^\times$. Applying $\phi$ to
the equation $t_1 x= \sigma(x) t_1$, one obtains that $\phi$
commutes with $\sigma$.

(8) By part (6) any automorphism $\phi$ sends $t_1$ to $ct_1$.
Now $y_2$ is a $\sigma^2$-normal element. Since $\phi$
commutes with $\sigma$, $\phi(y_2)$ is a $\sigma^2$-normal element.
By part (4), $\phi(y_2)$ is homogeneous of degree 2.
Thus $\phi(y_2)=\alpha y_2+ \beta t_1^2$ for some
$\alpha,\beta\in k$. The assertion follows.
\end{proof}

\begin{lemma}
\label{xxlem5.6}
Let $g$ be an algebra automorphism of $A$ such that $g(t_1)=t_1$.
\begin{enumerate}
\item[(1)]
$g(t_2)$ and $g(t_3)$ have zero constant terms.
\item[(2)]
If $g$  is  a graded algebra automorphism, then
$g(t_2)=t_2+a t_1$ and $g(t_3)=t_3+a t_2+d t_1$
for $a,d\in k$.
\item[(3)]
Let $g(t_2)=t_2+v$. Then
$v\in k[t_1,y_2]$.
\end{enumerate}
\end{lemma}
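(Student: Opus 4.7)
My plan is to prove the three parts of the lemma by substituting $g$ into the defining relations \eqref{E0.3.5} of $A(5)$ and reading off degree information, using the twisted-product description $A=C^{\sigma}$ of Lemma \ref{xxlem5.1} together with the inventory of $\sigma$-invariants in Lemma \ref{xxlem5.2}(5).

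For part (1), I would write $g(t_2)=c+h$ and $g(t_3)=c'+h'$ with $c,c'\in k$ and $h,h'\in A_{\geq 1}$. Applying $g$ to the first relation $t_2t_1-t_1t_2=-t_1^2$ only produces a statement inside $A_{\geq 2}$ and so fails to pin down $c$. The trick is to apply $g$ to the second relation $t_3t_1-t_1t_3=-t_2t_1-t_1^2$, which becomes
\[
h't_1-t_1h' \;=\; -c\,t_1 \;-\; ht_1 \;-\; t_1^2;
\]
the left side and every right-hand term other than $-ct_1$ sits in $A_{\geq 2}$, so extracting the degree-one component forces $c=0$. With $c=0$ in hand, the same degree-one trick applied to the third relation $t_3t_2-t_2t_3=t_1t_3-t_2^2-t_1t_2$ leaves $c't_1$ as the only surviving degree-one term on the right, forcing $c'=0$.

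For part (2), the graded hypothesis combined with $g(t_1)=t_1$ lets me expand $g(t_2)=at_1+a't_2+a''t_3$ and $g(t_3)=bt_1+b't_2+b''t_3$. Substituting into the first relation and using the linear independence of $\{t_1^2,\,t_2t_1\}$ in $A_2$ yields $a''=0$, $a'=1$. Substituting the resulting expression for $g(t_2)$ and the expansion of $g(t_3)$ into the second relation then pins down $b''=1$, $b'=a$. Setting $d:=b$ gives the claimed form; this step is routine linear algebra.

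Part (3) is where the twisted product does the real work. Writing $g(t_2)=t_2+v$ and applying $g$ to $t_2t_1-t_1t_2=-t_1^2$, the $-t_1^2$ terms cancel and I am left with $vt_1=t_1v$ in $A$. Decomposing $v=\sum v_i$ into $C$-homogeneous pieces, and using $\sigma(t_1)=t_1$, in the twisted product $v\ast t_1$ collapses to $v\cdot t_1$ while $t_1\ast v=t_1\cdot\sigma(v)$, so the commutation $vt_1=t_1v$ is equivalent to $\sigma(v)=v$ in the commutative domain $C$. Lemma \ref{xxlem5.2}(5) then identifies $v$ as an element of $C^{\langle\sigma\rangle}=k[t_1,y_2]$. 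The main obstacle I foresee is resisting the naive use of just the first relation in part (1); recognising that the second and third relations are needed to kill the constant terms is the key observation, and everything afterwards is organised degree bookkeeping.
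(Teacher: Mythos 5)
Your proposal is correct and follows essentially the same route as the paper: substitute $g$ into the defining relations, extract low-degree components to kill the constant terms and determine the linear parts, and use the twisted-product description $A=C^{\sigma}$ with Lemma \ref{xxlem5.2}(5) to identify the commutant of $t_1$ with $k[t_1,y_2]$. The only (immaterial) differences are which relation you use to kill each constant in part (1) and that in part (3) you show $v$ itself commutes with $t_1$ rather than first splitting off the linear term $at_1$ as the paper does.
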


\begin{proof}
(1) Applying $g$ to the relation $t_2 (t_3-t_2)=t_3 (t_2-t_1)$,
one sees that $g(t_2)$ has zero constant term. Applying
$g$ to the relation $t_2 (t_3-t_2)=t_3 (t_2-t_1)$ again,
one sees that $g(t_3)$ has zero constant term.

(2) Let $g(t_2)=a t_1+ bt_2+ ct_3$ and $g(t_3)=d t_1+ e t_2+ft_3$.
Applying $g$ to the relations, we see that $c=0$, $b=1$, $f=1$
and $e=a$. So the assertion follows.

(3) By part (2), $g(t_2)=t_2 +a t_1+ f$ where $f$ has lower degree
at least 2. Applying $g$ to the relation $t_1(t_2-t_1)=t_2t_1$,
one sees that $t_1f=ft_1$. So $f$ is a $\sigma$-invariant.
By Lemmas \ref{xxlem5.2}(5) and \ref{xxlem5.4}, $f$ is
generated by $t_1$ and $y_2$. So $v=at_1+f\in k[t_1,y_2]$.
\end{proof}

Now let $\partial: k[t_1,y_2]\to k[t_1,y_2]$ be the
derivation sending $f$ to $(\deg f) f$ for any homogeneous
element $f\in k[t_1,y_2]$.
We will prove that, given any  $u\in k[t_1,y_2]$ and any $\lambda
\in k$, the following determines an algebra automorphism
of $A$:
\begin{equation}
\label{E5.6.1}\tag{E5.6.1}
g(u,\lambda): \qquad
\begin{aligned}
t_1 & \to t_1,\\
t_2 & \to t_2+ t_1 u, \\
t_3 & \to t_3 +w, \quad {\text{where}}\; w=ut_2+ \frac{1}{2}
[ut_1-\partial (u)t_1+ u^2t_1-\lambda t_1].
\end{aligned}
\end{equation}

\begin{lemma}
\label{xxlem5.7}
Let $v$ be an element in $k[t_1,y_2]$.
\begin{enumerate}
\item[(1)]
$t_1 v= vt_1$.
\item[(2)]
$t_2 v= vt_2- \partial (v) t_1$.
\item[(3)]
$t_3 v=v t_3 - \partial(v) t_2 + \frac{1}{2}(\partial^2 -\partial)(v) t_1$.
\item[(4)]
Suppose $g$ is an algebra endomorphism of $A$ such that
$g(t_1)=t_1$ and $g(t_2)=t_2+v$ where $v\in k[t_1,y_2]$
and $g(t_3)=t_3+w$ and that $g(y_2)=y_2+\lambda t_1^2$. Then
$v=t_1 u$ where $u\in k[t_1,y_2]$ and $w= ut_2+\frac{1}{2}
[ut_1-\partial (u)t_1+ u^2t_1-\lambda t_1]$.
Namely, $g$ is of the form \eqref{E5.6.1}.
\item[(5)]
Let $g$, $u$ and $w$ be as in \eqref{E5.6.1} and $v=ut_1$. Then
$\sigma^{-1}(w)= w-v$. Assume $g$ is an algebra endomorphism, then
$g$ commutes with $\sigma$.
\item[(6)]
Let $g=g(u,\lambda)$ be as in \eqref{E5.6.1}. Then it is an 
algebra automorphism with inverse
$h: =g^{-1}$ given by
$$\begin{aligned}
h(t_1)& =t_1\\
h(t_2)&=t_2 + u' t_1\\
h(t_3)&=t_3+ w', \qquad {\text{where}} \\
u'&=-u(t_1,y_2-\lambda t_1^2)\\
w'&= u't_2+ \frac{1}{2}
[u't_1-\partial (u')t_1+ (u')^2t_1-(-\lambda) t_1].
\end{aligned}
$$
Using the notation introduced in \eqref{E5.6.1}, $h$
corresponds to $g(u',-\lambda)$  in \eqref{E5.6.1}
determined by the parameters $(u', -\lambda)$.
\end{enumerate}
\end{lemma}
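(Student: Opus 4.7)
My plan is to exploit two facts established earlier in this section: (i) the graded twist identification $A = C^\sigma$ of Lemma \ref{xxlem5.1}, under which $f \ast g = f \cdot \sigma^{\deg f}(g)$ inside $C = k[t_1,t_2,t_3]$; and (ii) the equality $C^{\langle \sigma\rangle} = k[t_1,y_2]$ from Lemma \ref{xxlem5.2}(5), so every $v\in k[t_1,y_2]$ satisfies $\sigma(v)=v$. A direct calculation using that $(\sigma-1)$ acts nilpotently on $A_1$ with $(\sigma-1)^3=0$ gives $\sigma^d(t_1)=t_1$, $\sigma^d(t_2)=t_2+dt_1$, and $\sigma^d(t_3)=t_3+dt_2+\binom{d+1}{2}t_1$.

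For parts (1)--(3), fix $v\in k[t_1,y_2]$ homogeneous of degree $d$. Then $t_i \ast v = t_i\sigma(v)=t_iv$ in $C$, whereas $v \ast t_i = v\sigma^d(t_i)$ in $C$. Subtracting and re-expressing the scalar $d$ (resp.\ $\binom{d+1}{2}$) acting on $v$ as $\partial(v)$ (resp.\ $\tfrac12(\partial^2+\partial)(v)$) gives the three identities; the $\partial^2$-correction in (3) arises by converting the stray $dvt_2$ term back into $\partial(v)\ast t_2 - \partial^2(v) t_1$ via the same twist formula.

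Part (4) is the main obstacle. Applying $g$ to the identity $y_2 = t_2^2+2t_1t_2-2t_1t_3$ of \eqref{E5.4.1}, expanding $(t_2+v)^2$ via part (2), and equating with $g(y_2)=y_2+\lambda t_1^2$ yields
\[
2t_1w \;=\; 2vt_2+v^2+2t_1v-\partial(v)t_1-\lambda t_1^2.
\]
The right-hand side must be left-divisible by $t_1$. Writing $v=v_0(y_2)+t_1v_1$ with $v_0\in k[y_2]$ and reducing modulo the left ideal $t_1A$, only $2v_0t_2+v_0^2$ survives. The quotient $A/(t_1A)$ is isomorphic to the Jordan plane $k\langle t_2,t_3\rangle/(t_2t_3-t_3t_2-t_2^2)$, which is a domain, and $y_2\equiv t_2^2$ there, so $v_0\in k[t_2^2]$. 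Factoring gives $v_0(2t_2+v_0)=0$, which, together with $t_2\notin k[t_2^2]$, forces $v_0=0$. Hence $v=t_1u$ with $u\in k[t_1,y_2]$; substituting back into the displayed equation, simplifying using parts (1) and (2), and cancelling $t_1$ (since $A$ is a domain) yields the stated expression for $w$.

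Parts (5) and (6) are direct verifications. For (5), applying $\sigma^{-1}$ termwise to the formula for $w$ (using $\sigma^{-1}(t_2)=t_2-t_1$, $\sigma^{-1}(t_1)=t_1$, and $\sigma^{-1}|_{k[t_1,y_2]}=\mathrm{id}$) immediately gives $\sigma^{-1}(w)=w-ut_1=w-v$; equivalently $\sigma(w)=w+v$, which reduces $g\sigma(t_i)=\sigma g(t_i)$ to a term-by-term check on the three generators. For (6), the key observation is $g(y_2)=y_2+\lambda t_1^2$, whence $g(u)=u(t_1,y_2+\lambda t_1^2)$ and thus $g(u')=-u(t_1,y_2)=-u$; then $gh(t_2)=t_2+ut_1+(-u)t_1=t_2$, and the corresponding (longer) verification on $t_3$ uses formula \eqref{E5.6.1} applied with the parameter pair $(u',-\lambda)$. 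The reverse composition $hg=\mathrm{id}$ follows by symmetry.
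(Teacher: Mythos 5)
Your handling of parts (1)--(4) is correct and in places sharper than the paper's own argument: the paper dismisses (1)--(3) with ``by direct computation,'' whereas your derivation via the twist formula $v\ast t_i=v\,\sigma^{\deg v}(t_i)$ together with $\sigma^d(t_2)=t_2+dt_1$ and $\sigma^d(t_3)=t_3+dt_2+\binom{d+1}{2}t_1$ is exactly the right mechanism; and in (4) the paper simply asserts ``Thus $t_1$ divides $v$,'' while your reduction modulo the normal element $t_1$, using $A/(t_1)\cong k\langle t_2,t_3\rangle/(t_2t_3-t_3t_2-t_2^2)$ (a domain in which $y_2\equiv t_2^2$) and the parity obstruction $t_2\notin k[t_2^2]$, actually justifies that step. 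Part (5) and the inverse computation in (6) also match the paper's reasoning.

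The genuine gap is in part (6). The statement asserts that the assignment \eqref{E5.6.1} \emph{is} an algebra automorphism, and the first --- and by far the longest --- portion of the paper's proof is the verification that $g(u,\lambda)$ respects the three defining relations of $A(5)$: the first two reduce, via part (5), to the identity $\sigma^{-1}(w)=w-ut_1$, but the third, $t_3(t_2-t_1)=t_2(t_3-t_2)$, requires a half-page expansion using parts (1)--(3) in which the specific correction term $\tfrac12[ut_1-\partial(u)t_1+u^2t_1-\lambda t_1]$ in $w$ is exactly what makes $g(L)-g(R)$ vanish. Part (4) only gives the converse implication (any endomorphism of that shape must have this $w$), so it does not supply existence. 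Your proposal takes well-definedness for granted (``direct verifications'') and likewise invokes $g(y_2)=y_2+\lambda t_1^2$ as ``the key observation'' without checking it for the $g$ of \eqref{E5.6.1}; both facts must be established before the computation $g(u')=-u$, $gh(t_2)=t_2$ can even be set up. To complete the proof you need to add (i) the verification that $g$ preserves all three relations, with the third done explicitly, and (ii) the check that $g(y_2)=y_2+\lambda t_1^2$.
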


\begin{proof} (1,2,3) By direct computation.

(4) We have $$\begin{aligned}
y_2+\lambda t_1^2&=g(y_2)=(t_2+v)^2+2t_1(t_2+v)-2t_1 (t_3+w)\\
&=t_2^2+ vt_2+t_2 v+v^2 +2t_1t_2+2t_1v-2t_1t_3 -2 t_1 w\\
&= y_2+2v t_2-\partial(v) t_1+v^2 +2t_1v -2t_1 w.\\
\end{aligned}
$$
So $\lambda t_1^2=2v t_2  -\partial(v) t_1+v^2 +2t_1v -2t_1 w$.
Thus $t_1$ divides $v$. Write $v=t_1 u$. Then 
$\lambda t_1^2 = 2ut_1t_2-\partial(u)t_1^2+ u^2t_1^2+ut_1^2-2t_1 w$. 
Hence $w= ut_2+\frac{1}{2}
[ut_1-\partial (u)t_1+ u^2t_1-\lambda t_1]$.

(5) Direct computation.

(6) First we show that $g$ determines an algebra endomorphism
of $A$. Since $ut_1$ commutes with $t_1$, $g$ preserves the relation
$t_1 (t_2-t_1)=t_2 t_1$. It is easy to see that $g$
preserves the relation $t_1(t_3-t_2)=t_3 t_1$ if and
only if $\sigma^{-1}(w)=w-ut_1$. So, by part (5),
$g$ preserves this relation. Next we show that $g$ preserves
the relation $t_3(t_2-t_1)=t_2(t_3-t_2)$. Let $L=t_3(t_2-t_1)$
and $R=t_2(t_3-t_2)$. Write $w=ut_2+s$ where
$s=\frac{1}{2}
[ut_1-\partial (u)t_1+ u^2t_1-\lambda t_1]$.
Then
$$\begin{aligned}
g(L)&= (t_3+w)(t_2+ut_1 -t_1)=L+wt_2+w(ut_1-t_1)+t_3ut_1\\
&=L+ut_2^2 +s t_2 +ut_2 (ut_1-t_1)+s(ut_1-t_1)\\
&\qquad\qquad\qquad
+(ut_1) t_3-\partial(ut_1)t_2 +\frac{1}{2}(\partial^2-\partial)(ut_1) t_1\\
&=L+ut_2^2 +s t_2 +u(ut_1-t_1)t_2-u\partial(ut_1-t_1)t_1
+s(ut_1-t_1)\\
&\qquad\qquad\qquad
+(ut_1) t_3-\partial(ut_1)t_2 +\frac{1}{2}(\partial^2-\partial)(ut_1) t_1\\
\end{aligned}
$$
and
$$\begin{aligned}
g(R)&=(t_2+ut_1)(t_3-t_2+w-ut_1)\\
&=R+ t_2(w-ut_1)+ut_1(t_3-t_2)
+ut_1(w-ut_1)\\
&=R+ut_2^2 -\partial(u)t_1 t_2+(s-ut_1)t_2-\partial(s-ut_1)t_1\\
&\qquad\qquad +ut_1(t_3-t_2)
+ut_1(w-ut_1).\\
\end{aligned}
$$
So
$$\begin{aligned}
g(L)-g(R)&=u^2t_1t_2-u\partial(ut_1-t_1)t_1+s(ut_1-t_1) 
+\frac{1}{2}(\partial^2-\partial)(ut_1)t_1\\
&\quad\qquad
+\partial(s-ut_1)t_1-ut_1(ut_2+s-ut_1) \\
&=-u[\partial(u)t_1+ut_1-t_1]t_1 +s(ut_1-t_1) 
+\frac{1}{2}[\partial^2(u)t_1+\partial(u)t_1]t_1 \\
&\quad\qquad +\partial(s)t_1-\partial(u)t_1^2-ut_1^2-ut_1s+u^2t_1^2\\
&=-u\partial(u)t_1^2 + s(ut_1-t_1)
+\frac{1}{2}\partial^2(u)t_1^2-\frac{1}{2}\partial(u)t_1^2
+\partial(s)t_1-ut_1s\\
&=-u\partial(u)t_1^2 +\frac{1}{2}
[ut_1-\partial (u)t_1+ u^2t_1-\lambda t_1](ut_1-t_1)
+\frac{1}{2}\partial^2(u)t_1^2\\
&\quad\qquad  + \frac{1}{2}[ut_1-\partial^2(u)t_1
+(\partial(u)u+u\partial(u))t_1+u^2t_1-\lambda t_1]t_1\\
&\quad\qquad -\frac{1}{2}\partial(u)t_1^2 
-\frac{1}{2}ut_1[ut_1-\partial (u)t_1+ u^2t_1-\lambda t_1] \\
&=0
\end{aligned}
$$
by using the equations $s=\frac{1}{2}
[ut_1-\partial (u)t_1+ u^2t_1-\lambda t_1]$ and $u\partial(u)=\partial(u)u$.
 Therefore $g$ is an algebra endomorphism.

One can also check that $g(y_2)=y_2+\lambda t_1^2$. The map $h$ is of the form
$g$ with $(u,\lambda)$ being replaced by $(u',-\lambda)$. So $h$ is an algebra
endomorphism. Similar to the $g$, we have $h(y_2)=y_2-\lambda t_1^2$.
By the choice of $u'$, one sees that $h(g(t_2))=t_2$. So we have
$h(g(t_1))=t_1, h(g(t_2))=t_2$ and $h(g(y_2))=y_2$. Since $y_2=t_1^2-2t_1t_3$,
we also have $h(g(t_3))=t_3$. Thus $h$ is a left inverse of $g$. 
A similar proof
shows that $g$ is a left inverse of $h$. Therefore $h$ is the inverse of $g$.
\end{proof}

\begin{theorem}
\label{xxthm5.8}
Every algebra automorphism $\tau$ of $A=A(5)$ is determined by
$$\tau(a, u, \lambda): \qquad
\begin{aligned}
t_1 & \to a t_1,\\
t_2 & \to a(t_2+ t_1 u), \\
t_3 & \to a(t_3 +w), \quad {\text{where}}\; w=ut_2+\frac{1}{2}
[ut_1-\partial (u)t_1+ u^2t_1-\lambda t_1].
\end{aligned}
$$
for some $a\in k^\times$, $u\in k[t_1,y_2]$ and $\lambda\in k$.
Conversely, given any $a\in k^\times$, $u\in k[t_1,y_2]$ and $\lambda\in k$,
$\tau(a,u,\lambda)$ is an algebra automorphism of $A$ and
the product of two such is determined by
$$\tau(a,u,\lambda) \tau( a', u', \lambda ')=
\tau(aa', a' u(t_1,y_2)+u'(at_1, a^2 (y_2+\lambda t_1^2)), \lambda+\lambda').$$
\end{theorem}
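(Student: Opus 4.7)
My plan splits the proof into three stages: verifying that each map $\tau(a, u, \lambda)$ is an algebra automorphism of $A$, proving that every algebra automorphism has this form, and deriving the composition law.

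For the first stage, since all three defining relations of $A(5)$ are homogeneous of degree two, the scaling map $\xi_a : t_i \mapsto a t_i$ is a graded algebra automorphism, and by Lemma \ref{xxlem5.7}(6) the map $g(u, \lambda) = \tau(1, u, \lambda)$ is an algebra automorphism of $A$. A brief computation, using that $k[t_1, y_2]$ is a commutative subring of $A$ and that $\xi_a$ restricts to the substitution $t_1 \mapsto a t_1$, $y_2 \mapsto a^2 y_2$, yields the identity $\xi_a \circ g(u, \lambda) = \tau(a, u(a t_1, a^2 y_2), \lambda)$, so every $\tau(a, u, \lambda)$ is realized as a composition of known automorphisms.

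For the converse, let $\tau$ be any algebra automorphism of $A$. By Lemma \ref{xxlem5.5}(6), $\tau(t_1) = a t_1$ for some $a \in k^{\times}$; replacing $\tau$ by $\xi_{a^{-1}} \circ \tau$ reduces to the case $\tau(t_1) = t_1$. Lemma \ref{xxlem5.6}(1,3) gives $\tau(t_2) = t_2 + v$ with $v \in k[t_1, y_2]$ and $v(0, 0) = 0$, and Lemma \ref{xxlem5.5}(8) gives $\tau(y_2) = \alpha y_2 + \beta t_1^2$ with $\alpha \in k^{\times}$, $\beta \in k$. The crucial step is to show $\alpha = 1$ and $t_1 \mid v$ in $k[t_1, y_2]$, for which I will reduce modulo the $\sigma$-normal ideal $(t_1)$: the quotient $A/(t_1)$ is the Jordan plane $k\langle t_2, t_3\rangle/(t_2 t_3 - t_3 t_2 - t_2^2)$, a domain, with $\bar y_2 = \bar t_2^{\,2}$. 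Since $\bar v$ lies in $k[\bar t_2^{\,2}]$ it commutes with $\bar t_2$, so
\[
\alpha \bar t_2^{\,2} \;=\; \overline{\tau(y_2)} \;=\; \overline{\tau(t_2)}^{\,2} \;=\; (\bar t_2 + \bar v)^2 \;=\; \bar t_2^{\,2} + 2 \bar t_2 \bar v + \bar v^2 .
\]
Linear independence of the powers $\{\bar t_2^{\,n}\}$ in the Jordan plane then forces $\alpha = 1$ and $\bar v = 0$, so $v \in t_1 \cdot k[t_1, y_2]$ and $\tau(y_2) = y_2 + \beta t_1^2$. Writing $v = t_1 u$, Lemma \ref{xxlem5.7}(4) applies with $\lambda := \beta$ and determines $\tau(t_3) = t_3 + w$ with the stated $w$, whence $\tau = g(u, \beta) = \tau(1, u, \beta)$. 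Undoing the normalization gives $\tau = \xi_a \circ \tau(1, u, \beta) = \tau(a, u(a t_1, a^2 y_2), \beta)$ by the decomposition of the first stage.

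Finally, the composition law is verified by a direct calculation on $t_1$, $y_2$, and $t_2$: evaluation on $t_1$ gives $a a' t_1$; evaluation on $y_2$ uses that $\tau(a, u, \lambda)$ restricts to the substitution $t_1 \mapsto a t_1$, $y_2 \mapsto a^2(y_2 + \lambda t_1^2)$ on the commutative subring $k[t_1, y_2]$ and yields $(a a')^2 (y_2 + (\lambda + \lambda') t_1^2)$; and evaluation on $t_2$ follows by applying $\tau(a, u, \lambda)$ to the polynomial $u'(t_1, y_2) \in k[t_1, y_2]$ using the same substitution rule. The main anticipated obstacle is the mod-$t_1$ reduction: one must confirm that the Jordan plane is indeed a domain (so that the powers of $\bar t_2$ are linearly independent) and that the zero-constant-term condition on $v$ propagates correctly to $\bar v$, but both are standard; the remaining bookkeeping is routine.
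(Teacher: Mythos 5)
Your proof is correct and follows essentially the same route as the paper's: normalize by $\xi_{a^{-1}}$ using Lemma \ref{xxlem5.5}(6), pin down $\tau_0(t_2)=t_2+v$ with $v\in k[t_1,y_2]$ via Lemma \ref{xxlem5.6}, show $\tau_0(y_2)=y_2+\lambda t_1^2$, and then invoke Lemma \ref{xxlem5.7}(4) and (6). The only real variation is your reduction modulo the normal element $t_1$ to the Jordan plane $k\langle t_2,t_3\rangle/(t_2t_3-t_3t_2-t_2^2)$ to force $\alpha=1$ and $t_1\mid v$; the paper instead reads off $\alpha=1$ from the $t_2^2$-coefficient of $\tau_0(y_2)$ and obtains $t_1\mid v$ as part of the conclusion of Lemma \ref{xxlem5.7}(4), whose own proof is, in effect, your mod-$t_1$ parity computation. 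So you are partly re-deriving a step the cited lemma already supplies, but the argument is sound.

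One caveat about the last stage: carrying out the direct calculation of the composite on $t_2$ gives
$(\tau(a,u,\lambda)\circ\tau(a',u',\lambda'))(t_2)=aa'\bigl(t_2+t_1[\,u(t_1,y_2)+u'(at_1,a^2(y_2+\lambda t_1^2))\,]\bigr)$,
i.e.\ the $u$-parameter of the product is $u(t_1,y_2)+u'(at_1,a^2(y_2+\lambda t_1^2))$, without the factor $a'$ on the first summand that appears in the printed formula (test with $u$ a nonzero constant, $u'=0$, $\lambda=\lambda'=0$, $a=1$). Be prepared for that discrepancy: it is a defect in the stated product formula, not in your method, and your strategy of identifying the parameters from the images of $t_1$, $t_2$ and $y_2$ is exactly the right way to settle it.
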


\begin{proof} Let $\tau$ be an algebra automorphism of $A$. By
Lemma \ref{xxlem5.5}(6), $\tau(t_1)=a t_1$ for some
$a\in k^\times$. Replacing $\tau$ by $\tau_0=\xi_{a^{-1}}\circ \tau$,
where $\xi_{a^{-1}}: f\to a^{-\deg f} f$ for any homogeneous element
$f\in A$, we obtain $\tau_0(t_1)=t_1$ (or we have $a=1$ for $\tau_0$).
By Lemma \ref{xxlem5.6}(1),
the constant terms of $\tau_0(t_2)$ and $\tau_0(t_3)$ are zero.
Hence $\tau_0$ preserves the maximal graded ideal ${\mathfrak{m}}
:=A_{\geq 1}$. Since $A=\gr A$ with respect to the ${\mathfrak{m}}$-filtration,
$\gr\tau_0$ is a graded algebra automorphism of $A$. By Lemma
\ref{xxlem5.6}(2), $\gr \tau_0$ sends $t_2\to t_2+ at_1$ and $t_3\to
t_3+a t_2+d t_1$ for some $a,d\in k$. This implies
that $\tau_0$ sends $t_2\to t_2+ a t_1+hdt$ and
$t_3\to t_3 +a t_2+ d t_1+hdt$.
By Lemma \ref{xxlem5.6}(3), $\tau_0(t_2)=t_2+v$ where
$v\in k[t_1,y_2]$. By Lemma \ref{xxlem5.5}(8),
$\tau_0$ maps $y_2$ to $by_2 +\lambda t_1^2$.
Consider the $t_2^2$ term of $y_2$, one sees that
$\tau_0(y_2)=y_2+\lambda t_1^2$. By Lemma
\ref{xxlem5.7}(4), $v=u t_1$ and
$\tau_0(t_3)=t_3+w$ where $w=ut_2+\frac{1}{2}
[ut_1-\partial (u)t_1+ u^2t_1-\lambda t_1]$, namely,
$\tau_0$ is of the form $g(u,\lambda)$ given
in \eqref{E5.6.1}, which is an algebra automorphism
by Lemma \ref{xxlem5.7}(6). Therefore we have $\tau=\xi_{a}\circ
g(u,\lambda)$, which shows the assertion.

Conversely, since $g(u,\lambda)$ is an algebra automorphism,
so is $\tau=\xi_{a}\circ g(u,\lambda)$.

The final statement
about the product can be checked directly.
\end{proof}

\section{Nakayama automorphisms and locally nilpotent derivations}
\label{xxsec5}
In this section we discuss some relationships between the
Nakayama automorphism and locally nilpotent derivations.
Most of the ideas come from \cite{BeZ}.

Recall that a $k$-linear map
$\delta: A\to A$ is called a  derivation if
$$\delta(ab)=a\delta(b)+\delta(a) b$$
for all $a,b \in A$. A derivation $\delta$ is called locally
nilpotent if, for every $a\in A$, there is an $n$
such that $\delta^n(a)=0$. The set of
derivations (respectively, locally nilpotent derivations)
is denoted by ${\rm Der}(A)$ (respectively ${\rm LND}(A)$).
We will deal with a slightly more complicated form.

\begin{definition}
\label{xxdef6.1} Let $A$ be an algebra.
\begin{enumerate}
\item[(1)]
A {\it higher derivation} (or {\it Hasse-Schmidt derivation})
\cite{HS} on $A$ is a sequence of $k$-linear endomorphisms
$\partial:=(\partial_i)_{i\geq 0}$ such that:
$$\partial_0 = id_A, \quad
\text{and} \quad  \partial_n(ab) =\sum_{i=0}^n
\partial_i(a)\partial_{n-i}(b)
$$
for all $a, b \in A$ and for all $n\geq 0$. The set of
higher derivations is denoted by ${\rm Der}^H(A)$.
\item[(2)]
A higher derivation is called {\it iterative} if $\partial_i \partial_j =
{i+j \choose i} \partial_{i+j}$ for all $i, j
\geq 0$.
\item[(3)]
A higher derivation is called {\it locally nilpotent} if
\begin{enumerate}
\item
for all $a \in A$  there exists $n \geq  0$ such that $\partial_i(a) = 0$
for all $i \geq  n$,
\item
The map $G_{\partial,t}: A[t]\to A[t]$ defined by
\begin{equation}
\label{E6.1.1}\tag{E6.1.1}
G_{\partial,t}: a\mapsto \sum_{i\geq 0} \partial_i(a) t^i,\quad 
t\mapsto t
\end{equation}
is an algebra automorphism of $A[t]$.
\end{enumerate}
The set of locally nilpotent higher derivations is denoted by
$\LND^H(A)$.
\item[(4)]
For any $\partial\in {\rm Der}^H(A)$, then the kernel of $\partial$
is defined to be
$$\ker \partial =\bigcap_{i\geq 1} \ker \partial_i.$$
\end{enumerate}
\end{definition}

Note that $\partial_1$ is necessarily a derivation of $A$. Hence
there is a $k$-linear map ${\rm Der}^H(A)\to {\rm Der}(A)$ by sending
$(\partial_i)$ to $\partial_1$. In characteristic $0$, the only
iterative higher derivation $\partial=(\partial_i)$
on A such that $\partial_1=\delta$ is given by:
\begin{equation}
\label{E6.1.2}\tag{E6.1.2}
\partial_n=\frac{\delta^n}{n!}
\end{equation}
for all $n \geq  0$. It is clear that if $\delta$ is
a locally nilpotent derivation, then $\partial$ is a locally nilpotent
higher derivation.  This iterative higher
derivation is called the canonical higher derivation associated to $\delta$.
In this case, we have a map ${\rm Der}(A)\to {\rm Der}^H(A)$
sending $\delta$ to $(\partial_i)$ as defined by \eqref{E6.1.2}. Hence
the map ${\rm Der}(A)\to {\rm Der}^H(A)$ is the right inverse of
the map ${\rm Der}^H(A)\to {\rm Der}(A)$.

The following lemmas are easy.

\begin{lemma}\cite[Lemma 2.2]{BeZ}
\label{xxlem6.2} Let $\partial:=(\partial_i)_{i\geq 0}$ be a
 higher derivation.
\begin{enumerate}
\item[(1)]
Suppose $\partial$ is locally nilpotent.
For any $c\in k$, $G_{c\partial}: A\to A$ defined by
\begin{equation}
\label{E6.2.1}\tag{E6.2.1}
G_{c\partial}: a\to \sum_{i\geq 0} c^i \partial_i(a)
\end{equation}
is an algebra automorphism of $A$.
\item[(2)]
If $\partial$ is iterative and satisfies Definition {\rm{\ref{xxdef6.1}(3a)}},
then $G_{\partial, t}$ as defined in \eqref{E6.1.1}
is an algebra  automorphism of $A$.
As a consequence, $\partial$ is locally nilpotent.
\item[(3)]
If $G: A[t]\to A[t]$ be a $k[t]$-algebra automorphism and
if $G(a)\equiv a \mod t$ for all $a\in A$, then $G=G_{\partial, t}$
for some $\partial\in \LND^H(A)$.
\end{enumerate}
\end{lemma}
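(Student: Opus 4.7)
The plan is to treat the three parts as variations on a single theme: a higher derivation $\partial = (\partial_i)$ repackages naturally as a $k$-linear map $A \to A[t]$ sending $a \mapsto \sum_i \partial_i(a) t^i$, and the Leibniz-type identity in Definition \ref{xxdef6.1}(1) is exactly the statement that this map, extended $t$-linearly, is a $k[t]$-algebra endomorphism of $A[t]$. The question in each part is whether specializing or composing yields an isomorphism.

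For part (2), where $\partial$ is iterative and satisfies (3a), I would introduce the sequence $\tilde\partial = ((-1)^i \partial_i)_{i \geq 0}$ and verify that it is again an iterative higher derivation satisfying (3a). Applying $G_{\partial,t} \circ G_{\tilde\partial, t}$ to $a \in A$ and using the iterative identity $\partial_j \partial_i = \binom{i+j}{j} \partial_{i+j}$ produces $\sum_n \bigl(\sum_{i+j=n} (-1)^i \binom{n}{i}\bigr)\partial_n(a) t^n$, whose inner sum equals $(1-1)^n = \delta_{n,0}$. Hence $G_{\tilde\partial, t}$ is a two-sided inverse for $G_{\partial, t}$. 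The final clause ``$\partial$ is locally nilpotent'' is then immediate: (3a) is assumed and (3b) has just been established.

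For part (3), given a $k[t]$-automorphism $G$ of $A[t]$ with $G(a) \equiv a \pmod{t}$ for all $a \in A$, I would simply read off coefficients: write $G(a) = \sum_i \partial_i(a) t^i$, so $\partial_0 = \mathrm{id}_A$ by the congruence hypothesis. Equating coefficients of $t^n$ in $G(ab) = G(a)G(b)$ yields the Leibniz identity for $(\partial_i)$. Condition (3a) holds because $G(a)$ is a polynomial in $t$, and (3b) is the hypothesis on $G$; by construction $G = G_{\partial, t}$.

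For part (1), the map $G_{c\partial}$ is well-defined by (3a), and the Leibniz identity gives the algebra-homomorphism property by the same coefficient computation as above, specialized at $t = c$. The subtle point---which I expect to be the main obstacle---is bijectivity when $\partial$ is not iterative, since the alternating-sum cancellation of part (2) is unavailable. The remedy is to invoke (3b) directly: since $G_{\partial, t}$ is an automorphism of $A[t]$, let $H$ denote its inverse. Because $H(a) \equiv a \pmod{t}$ as well, part (3) applies and produces $\tilde\partial \in \LND^H(A)$ with $H = G_{\tilde\partial, t}$. Comparing coefficients of $t^n$ in $G_{\partial, t} \circ H = \mathrm{id}_{A[t]}$ gives $\sum_{i+j=n} \partial_j \tilde\partial_i = \delta_{n,0} \mathrm{id}_A$; specializing $t = c$ shows $G_{c\tilde\partial}$ is two-sided inverse to $G_{c\partial}$.
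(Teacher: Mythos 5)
The paper does not actually prove this lemma: it is imported verbatim from \cite[Lemma 2.2]{BeZ} with the remark ``the following lemmas are easy,'' so there is no in-paper argument to compare against. Your proposal is a correct and complete proof. The logical order (prove (2) and (3) first, then deduce bijectivity in (1) from (3)) avoids the one real pitfall, namely that for a non-iterative $\partial$ the alternating-sign inverse of part (2) is unavailable; your use of Definition \ref{xxdef6.1}(3b) to extract an inverse higher derivation $\tilde\partial\in\LND^H(A)$ via part (3), followed by specialization of the identity $\sum_{i+j=n}\partial_j\tilde\partial_i=\delta_{n,0}\,\mathrm{id}_A$ at $t=c$, is exactly what is needed, and the required observation that the inverse $H$ again satisfies $H(a)\equiv a\pmod t$ follows by reading off constant terms as you indicate. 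Two small points you leave implicit but which are routine: all the sums appearing in $G_{c\partial}$ and in the composed identities are finite by condition (3a) for both $\partial$ and $\tilde\partial$, and the evaluation $t\mapsto c$ commutes with the relevant compositions because $G_{\partial,t}$ and $G_{\tilde\partial,t}$ are $k[t]$-linear. Neither affects the validity of the argument.
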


\begin{lemma}
\label{xxlem6.3}
Let $A$ be a connected graded algebra. Let $g$ be a unipotent
automorphism of $A$. For every homogeneous element $a\in A$,
define $\partial_i(a)$ to be the $(i+\deg a)$-degree piece of
$g(a)$. Then $\partial:=(\partial_i)$ is a locally nilpotent
higher derivation and $g=G_{1\partial}$ as defined in \eqref{E6.2.1}.
As a consequence, if $\LND^H(A)=0$, then $\Aut_{uni}(A)=\{Id_A\}$.
\end{lemma}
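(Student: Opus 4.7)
The plan is to verify each clause of Definition \ref{xxdef6.1} for the sequence $\partial = (\partial_i)$ extracted from a unipotent automorphism $g$, then exhibit the inverse of $G_{\partial,t}$ by applying the same extraction to $g^{-1}$, and finally observe that the recovery formula $g = G_{1\partial}$ reduces the consequence to the hypothesis $\LND^H(A) = 0$.

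First I would handle the routine algebraic identities. Since $g$ is unipotent, for any homogeneous $a \in A_d$ we have $g(a) = a + (\text{strictly higher degree terms})$, so the degree-$d$ component of $g(a)$ is $a$, giving $\partial_0(a) = a$; by $k$-linearity $\partial_0 = \mathrm{id}_A$. For the Leibniz rule, apply $g$ to a product of homogeneous elements $a, b$: since $g(ab) = g(a)g(b)$ and multiplication respects degrees, extracting the degree-$(\deg a + \deg b + n)$ homogeneous component of both sides gives $\partial_n(ab) = \sum_{i=0}^{n} \partial_i(a) \partial_{n-i}(b)$. Extending bilinearly yields the general Leibniz rule. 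Condition \ref{xxdef6.1}(3a) holds because any element $g(a) \in A$ has by definition only finitely many nonzero homogeneous components, so $\partial_i(a) = 0$ for $i$ sufficiently large.

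Next I would verify that $G_{\partial,t}$ is a $k[t]$-algebra automorphism of $A[t]$. Multiplicativity is a direct consequence of the Leibniz rule:
\[
G_{\partial,t}(a) G_{\partial,t}(b) = \sum_{i,j \geq 0} \partial_i(a) \partial_j(b) \, t^{i+j} = \sum_{n \geq 0} \partial_n(ab) \, t^n = G_{\partial,t}(ab),
\]
and by construction $G_{\partial,t}$ fixes $t$ and reduces to $\mathrm{id}_A$ modulo $t$. For bijectivity, apply the entire construction to the unipotent automorphism $g^{-1}$, producing a sequence $\partial' = (\partial'_i)$ with $\partial'_i$ raising degree by $i$. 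Then for $a \in A_d$,
\[
G_{\partial',t}(G_{\partial,t}(a)) = \sum_{n \geq 0} \Bigl(\sum_{i+j=n} \partial'_j \partial_i(a)\Bigr) t^n,
\]
and the inner sum lies in $A_{d+n}$. Setting $t = 1$ recovers $g^{-1}(g(a)) = a \in A_d$, and by the uniqueness of homogeneous decompositions this forces the inner sum to be $a$ when $n=0$ and $0$ otherwise. Hence $G_{\partial',t} \circ G_{\partial,t} = \mathrm{id}_{A[t]}$; the opposite composition is symmetric, so $G_{\partial,t}$ is invertible and $\partial \in \LND^H(A)$.

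The identity $g = G_{1\partial}$ is immediate: for homogeneous $a$, $\sum_{i \geq 0} \partial_i(a)$ is simply the sum of all homogeneous components of $g(a)$, which equals $g(a)$. For the consequence, suppose $\LND^H(A) = 0$. Any $g \in \Aut_{\mathrm{uni}}(A)$ yields a locally nilpotent higher derivation $\partial$ by the construction, and the hypothesis forces $\partial_i = 0$ for all $i \geq 1$ (keeping $\partial_0 = \mathrm{id}_A$). Therefore $g(a) = G_{1\partial}(a) = \partial_0(a) = a$ for all homogeneous $a$, so $g = \mathrm{id}_A$. The main point requiring care is the bijectivity of $G_{\partial,t}$; the cleanest route, as above, is the symmetric construction from $g^{-1}$ combined with the degree-tracking argument, which avoids any direct inversion of formal power series.
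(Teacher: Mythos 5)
Your proof is correct and complete; the paper itself states Lemma \ref{xxlem6.3} without proof (it is grouped with the ``easy'' lemmas), and your argument is exactly the natural verification one would expect: extract the homogeneous components of $g$, check the Leibniz rule degreewise, and obtain bijectivity of $G_{\partial,t}$ from the symmetric construction applied to $g^{-1}$. The only step you use implicitly is that $g^{-1}$ is again unipotent --- i.e.\ that $\Aut_{uni}(A)$ is closed under inverses --- which follows by comparing lowest-degree components of $g(g^{-1}(x))=x$ and is consistent with the paper's description of $\Aut_{uni}(A)$ as a subgroup.
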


We now recall the definition of the Makar-Limanov invariant.

\begin{definition}
\label{xxdef6.4}
Let $A$ be an algebra over $k$. Let $*$ be either blank or $^H$.
\begin{enumerate}
\item[(1)]
The {\it Makar-Limanov$^*$ invariant} \cite{Ma} of $A$ is defined to be
\begin{equation}
\label{E6.4.1}\tag{E6.4.1}
{\rm ML}^*(A) \ = \ \bigcap_{\delta\in {\rm LND}^*(A)} {\rm ker}(\delta).
\end{equation}
This means that we have original $\ML(A)$, as well as, $\ML^H(A)$.
\item[(2)]
We say that $A$ is \emph{$\LND^*$-rigid} if ${\rm ML}^*(A)=A$,
or $\LND^*(A)=\{0\}$.
\item[(3)]
We say that $A$ is \emph{strong $\LND^*$-rigid} if ${\rm ML}^*(A[t_1,\cdots,t_d])=A$,
for all $d\geq 1$.
\end{enumerate}
\end{definition}

By \cite[Theorem 3.6]{BeZ}, if $A$ is a finitely generated domain of 
finite GK-dimension and  $\ML^H(A)=A$, then $A$ is cancellative. Now 
we prove Theorem \ref{xxthm0.8}.

\begin{theorem}
\label{xxthm6.5}
Let $A$ be an algebra with Nakayama automorphism $\mu$. Suppose
that  $A^\times$ is in the center of $A$. Then $\mu$ commutes with
every locally nilpotent higher derivation $\partial=(\partial_i)$ of $A$,
that is, $\mu \partial_i = \partial_i \mu$ for all $i$.
\end{theorem}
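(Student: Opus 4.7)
The plan is to reduce the statement, via a one-parameter family of algebra automorphisms, to the automorphism result already proved in Theorem \ref{xxthm4.2}. Concretely, Lemma \ref{xxlem6.2}(1) tells us that for every scalar $c \in k$ the map
$$G_{c\partial}:A\to A,\qquad a\mapsto \sum_{i\geq 0} c^{i}\partial_{i}(a),$$
is an algebra automorphism of $A$, where the sum is finite by the local nilpotence hypothesis on $\partial$. Since $A^{\times}\subseteq Z(A)$ by assumption, Theorem \ref{xxthm4.2} applies and yields $\mu\circ G_{c\partial}=G_{c\partial}\circ\mu$ as automorphisms of $A$ for every $c\in k$.

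Writing this identity out at an arbitrary $a\in A$ and using $k$-linearity of $\mu$ gives
$$\sum_{i\geq 0} c^{i}\bigl(\mu(\partial_{i}(a))-\partial_{i}(\mu(a))\bigr)=0,$$
a polynomial identity in the parameter $c$ of bounded degree (again by local nilpotence on the element $a$). Because $k$ has characteristic zero and is therefore infinite, a polynomial in $c$ that vanishes for every $c\in k$ must have all coefficients zero; this forces $\mu(\partial_{i}(a))=\partial_{i}(\mu(a))$ for every $i\geq 0$ and every $a\in A$, which is exactly the asserted equality $\mu\partial_{i}=\partial_{i}\mu$. There is no genuine obstacle here: once one observes that $\partial$ packages into the automorphism family $\{G_{c\partial}\}_{c\in k}$, everything follows from Theorem \ref{xxthm4.2} together with a standard polynomial-interpolation argument in $c$, and the only input one really has to verify is that local nilpotence of $\partial$ makes the defining sum of $G_{c\partial}$ finite on each element of $A$, so that $G_{c\partial}$ is a well-defined $k$-linear endomorphism to which Lemma \ref{xxlem6.2}(1) can be applied.
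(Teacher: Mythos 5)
Your proof is correct, and it reaches the conclusion by a genuinely different (though closely related) route from the paper's. The paper takes the single algebra automorphism $G_{\partial,t}$ of $A[t]$ guaranteed by Definition \ref{xxdef6.1}(3b), observes that $(A[t])^\times=A^\times$ is central in $A[t]$, applies Theorem \ref{xxthm4.2} to $A[t]$ to conclude that $\mu_{A[t]}$ commutes with $G_{\partial,t}$, and then extracts $\mu\partial_i=\partial_i\mu$ by comparing coefficients of $t^i$; this requires the additional (unproved, though true) facts that $A[t]$ again has a Nakayama automorphism and that $\mu_{A[t]}=\mu_A\otimes \mathrm{Id}_{k[t]}$. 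You instead stay inside $A$: you use the one-parameter family $G_{c\partial}\in \Aut(A)$ from Lemma \ref{xxlem6.2}(1), apply Theorem \ref{xxthm4.2} to $A$ itself to get $\mu\circ G_{c\partial}=G_{c\partial}\circ\mu$ for every $c\in k$, and recover the individual identities $\mu\partial_i=\partial_i\mu$ by observing that $\sum_{i\ge 0}c^i\bigl(\mu(\partial_i(a))-\partial_i(\mu(a))\bigr)$ is a polynomial in $c$ (of bounded degree, since local nilpotence kills $\partial_i$ at both $a$ and $\mu(a)$ for large $i$) vanishing on the infinite field $k$. What your approach buys is that it avoids invoking any Nakayama automorphism of $A[t]$ and its compatibility with $\mu_A$, at the modest cost of needing $k$ infinite (automatic here, as $\operatorname{char}k=0$); what the paper's approach buys is a single clean coefficient comparison and no interpolation step. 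Both hinge on the same engine, Theorem \ref{xxthm4.2}, and your argument is complete as written.
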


\begin{proof} Let $G_{t,\partial}$ be the automorphism of $A[t]$
defined as in \eqref{E6.1.1}. Note that $(A[t])^\times
=A^\times$ is in the center of $A[t]$. By Theorem \ref{xxthm4.2},
$\mu_{A[t]}$ commutes with $G_{t,\partial}$. Since
$\mu_{A[t]}=\mu_A\otimes Id_{k[t]}$, $\mu_A$
commutes with $\partial_i$ for all $i$.
\end{proof}

The next lemma is similar to Lemma \ref{xxlem4.3}.

\begin{lemma}
\label{xxlem6.6} Let $A$ be an algebra with Nakayama automorphism
$\mu_A$ and such that $A^\times =k^\times$.
Suppose that $\{x_1,\cdots,x_n\}$ is a set of generators of $A$ such that
$\mu_A(x_i)=\lambda_i x_i$ for all $i$ and that the
set of the ordered monomials
$\{x_1^{a_1}\cdots x_n^{a_n}\mid a_1,\cdots,a_n\geq 0\}$
spans the whole algebra $A$.
Assume that $\lambda_1$ cannot be written as $\prod_{j>1} \lambda_j^{b_i}$
for any $b_j\geq 0$. Then $x_1\in {\rm ML}^H(A[y_1,\cdots,y_w])$.
\end{lemma}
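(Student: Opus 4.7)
The plan is to closely mirror the argument of Lemma \ref{xxlem4.3}, with Theorem \ref{xxthm6.5} playing the role of Theorem \ref{xxthm4.2}. Set $B = A[y_1,\ldots,y_w]$. Its Nakayama automorphism is $\mu_B = \mu_A \otimes \mathrm{id}_{k[y_1,\ldots,y_w]}$, which acts diagonally on the generators by $\mu_B(x_i) = \lambda_i x_i$ and $\mu_B(y_j) = y_j$. Since $A^\times = k^\times$ is central in $A$, the same holds in $B$, so Theorem \ref{xxthm6.5} asserts $\mu_B \partial_i = \partial_i \mu_B$ for every $\partial = (\partial_i) \in \LND^H(B)$ and every $i \geq 0$.

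Fix such a $\partial$. Applying the commutation to $x_1$ yields $\mu_B(\partial_i(x_1)) = \lambda_1 \partial_i(x_1)$, so each $\partial_i(x_1)$ lies in the $\lambda_1$-eigenspace of $\mu_B$. Expand $\partial_i(x_1)$ in the spanning set of ordered monomials $\{x_1^{a_1}\cdots x_n^{a_n} y_1^{c_1}\cdots y_w^{c_w}\}$; each such monomial is itself a $\mu_B$-eigenvector with eigenvalue $\prod_{j=1}^n \lambda_j^{a_j}$, so only monomials satisfying $\prod_{j=1}^n \lambda_j^{a_j} = \lambda_1$ can appear with nonzero coefficient. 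The hypothesis on $\lambda_1$ excludes all monomials with $a_1 = 0$, so $a_1 \geq 1$ in every surviving monomial; using the ordering one factors $x_1$ out on the left to conclude $\partial_i(x_1) \in x_1 B$ for every $i \geq 1$.

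Hence the $k[t]$-algebra automorphism $G_{\partial, t}$ from \eqref{E6.1.1} satisfies $G_{\partial,t}(x_1) = x_1 F$ with $F \in B[t]$ and $F \equiv 1 \pmod{t}$. By Lemma \ref{xxlem6.2}(3), the inverse $G_{\partial,t}^{-1}$, which is itself the identity modulo $t$, equals $G_{\partial',t}$ for some $\partial' \in \LND^H(B)$; rerunning the eigenvalue argument for $\partial'$ yields $G_{\partial,t}^{-1}(x_1) = x_1 F'$ with $F' \equiv 1 \pmod{t}$. Composing, $x_1 = G_{\partial,t}^{-1}(x_1 F) = x_1 F' \cdot G_{\partial,t}^{-1}(F)$, so $F' \cdot G_{\partial,t}^{-1}(F) = 1$ in $B[t]$ (after canceling the regular element $x_1$). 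Invoking $B[t]^\times = k^\times$, this forces $F' = 1$, whence $G_{\partial,t}(x_1) = x_1$; that is, $\partial_i(x_1) = 0$ for all $i \geq 1$, so $x_1 \in \ker \partial$. Since $\partial \in \LND^H(B)$ was arbitrary, $x_1 \in \ML^H(B)$.

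The main technical obstacle is the last step: justifying the cancellation of $x_1$ and the identification $B[t]^\times = k^\times$, both of which rely on a regularity/domain hypothesis implicit in the standing assumption $A^\times = k^\times$ (and automatic for the AS regular algebras to which the lemma is applied). Once this unit-tracking is secured, the rest is a direct higher-derivation translation of the proof of Lemma \ref{xxlem4.3} via the commutation identity of Theorem \ref{xxthm6.5}.
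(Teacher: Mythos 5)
Your proof is correct and takes essentially the same route as the paper: the paper simply applies Lemma \ref{xxlem4.3} to the algebra $C=B[t]$ and the automorphism $G_{t,\partial}$ (whose hypotheses $C$ inherits, with $t$ an extra eigenvalue-one generator), getting $G_{t,\partial}(x_1)=cx_1$ and then $c=1$, $\partial_i(x_1)=0$ by comparing $t$-coefficients. Your argument just inlines the eigenvector-and-unit proof of Lemma \ref{xxlem4.3} in that setting, and the cancellation/unit caveats you flag at the end are equally implicit in the paper's own Lemma \ref{xxlem4.3} and Theorem \ref{xxthm6.5}.
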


\begin{proof} Consider the algebra $B:=A[y_1,\cdots, y_w]$ and $C=B[t]$.
Then $B$ and $C$ satisfy the hypotheses of the lemma. Let $\partial
=(\partial_i)$ be a locally nilpotent higher derivation of  $B$ and consider
the algebra automorphism $G_{t,\partial}: B[t]\to B[t]$. Since $C=B[t]$
satisfies the hypothesis, by Lemma \ref{xxlem4.3},
$G_{t,\partial}(x_1)= cx_1$
for some $c\in k$. But $G_{t,\partial}(x_1)=x_1+
\sum_{i=1}^{\infty} \partial_i (x_1) t^i$.
Thus $c=1$ and $x_1\in \ker \partial$. The assertion follows.
\end{proof}

\begin{definition} \cite[Definition 1.1]{BeZ}
\label{yydef6.7}
Let $A$ be an algebra.
\begin{enumerate}
\item
We call $A$ {\it cancellative} if $A[y]\cong B[y]$ for
some algebra $B$ implies that $A\cong B$.
\item
We call $A$ {\it strongly cancellative} if, for any $d\geq 1$,
$$A[y_1,\cdots,y_d]\cong B[y_1,\cdots,y_d]$$ for some algebra
$B$ implies that $A\cong B$.
\end{enumerate}
\end{definition}

Now we prove the second half of Corollary \ref{xxcor0.7}.

\begin{corollary}
\label{xxcor6.8}
If $A$ is one of the following algebras, then
$\ML^H(A[y_1,\cdots, y_n])=A$. As a consequence,
$A$ is also strongly cancellative.
\begin{enumerate}
\item[(1)]
$A(1)$ where $p_{ij}$ are generic.
\item[(2)]
$A(2)$
where $p$ is not a root of unity.
\item[(3)]
$A(6)$ where $\beta$ is not a root of unity.
\item[(4)]
$A(7)$ where $p$ is not a root of unity.
\end{enumerate}
\end{corollary}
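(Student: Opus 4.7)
The plan is to establish the equality $\ML^H(B) = A$ for $B := A[y_1,\ldots,y_n]$ and then invoke \cite[Theorem 3.6]{BeZ} (or its strong-rigidity analogue) to get strong cancellativeness. The easy inclusion $\ML^H(B) \subseteq A$ is standard: each partial derivation $\partial/\partial y_i$, promoted via \eqref{E6.1.2} to an iterative locally nilpotent higher derivation, has kernel $A[y_1,\ldots,\widehat{y_i},\ldots,y_n]$, and intersecting over $i$ yields $A$.

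For the reverse inclusion $A \subseteq \ML^H(B)$, the main tool is Lemma \ref{xxlem6.6}. In cases (1), (3), (4) the strategy is to check that for each PBW-type generator $x_j$ with Nakayama eigenvalue $\lambda_j$, the condition ``$\lambda_j \ne \prod_{i\ne j} \lambda_i^{b_i}$ for all $b_i\ge 0$'' is satisfied, so that Lemma \ref{xxlem6.6} (applied after relabeling to put $x_j$ first) gives $x_j \in \ML^H(B)$. For $A(1)$ with generic $p_{ij}$, the eigenvalues from \eqref{E1.5.1} are multiplicatively independent. For $A(6)$, I use the basis $\{x^{n_1}(yx)^{n_2} y^{n_3}\}$ as in Proposition \ref{xxpro4.6}, with eigenvalues $(-\beta, 1, -\beta^{-1})$; the identity $(-\beta^{-1})^b = -\beta$ would force $\beta^{2(b+1)}=1$, so $\beta$ not a root of unity yields the needed condition for $x$, and the symmetric ordering for $y$ settles the other generator. $A(7)$ is handled analogously. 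Since $\ML^H(B)$ is a subalgebra containing all algebra generators of $A$, we get $A \subseteq \ML^H(B)$.

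The main obstacle is case $A(2)$, where $\mu_A(t_1) = t_1$, so Lemma \ref{xxlem6.6} does not apply to $t_1$. The plan is to first use the orderings $(t_2,t_1,t_3)$ and $(t_3,t_1,t_2)$ of generators—both yield spanning ordered-monomial sets because $t_1$ is central—to deduce $t_2, t_3 \in \ML^H(B)$. Then for any $\partial \in \LND^H(B)$, the automorphism $G := G_{\partial, s}$ of $B[s]$ fixes $t_2$ and $t_3$, hence fixes the relation
\[
t_1^2 = t_3t_2 - p\, t_2 t_3,
\]
so $G(t_1)^2 = t_1^2$. Writing $G(t_1) = t_1 + f$ with $f \in sB[s]$ and using centrality of $t_1$ in $B$, the equation expands to $f(2t_1 + f) = 0$ in the domain $B[s]$; since $f$ vanishes at $s = 0$ it cannot equal $-2t_1$, so $f = 0$ and $\partial_i(t_1) = 0$ for $i \ge 1$. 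Hence $t_1 \in \ML^H(B)$.

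Combining both inclusions gives $\ML^H(A[y_1,\ldots,y_n]) = A$ for every $n \ge 1$, so $A$ is strongly $\LND^H$-rigid in the sense of Definition \ref{xxdef6.4}(3), and strong cancellativeness follows from \cite[Theorem 3.6]{BeZ}. The only delicate point is the trick with centrality plus the quadratic relation in $A(2)$; the other cases reduce cleanly to direct applications of Lemma \ref{xxlem6.6} after verifying the arithmetic condition on eigenvalues.
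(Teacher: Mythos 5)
Your proof is correct and follows essentially the same route as the paper: both inclusions are obtained exactly as in the paper's argument, with $t_2,t_3\in\ML^H$ for $A(2)$ coming from Lemma \ref{xxlem6.6} applied to the orderings $(t_2,t_1,t_3)$ and $(t_3,t_1,t_2)$, and $t_1$ recovered from the relation $t_1^2=t_3t_2-pt_2t_3$. The only difference is that where the paper invokes ``a basic property of locally nilpotent derivations'' to pass from $t_1^2\in\ker\partial$ to $t_1\in\ker\partial$, you spell out that step explicitly via $G_{\partial,s}(t_1)=t_1+f$ and the domain property of $B[s]$, which is a valid instantiation of the same idea.
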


\begin{proof} (1) By the proof of Proposition \ref{xxpro4.4}
for $A=A(1)$ with generic $p_{ij}$,
the hypothesis of Lemma \ref{xxlem6.6} was
checked for $\{x_1,x_2,x_3\}=\{t_1,t_2,t_3\}$.
So $t_1\in \ML^H(A[y_1,\cdots, y_n])$. By symmetry,
$t_1,t_2\in  \ML^H(A[y_1,\cdots, y_n])$. Hence
$$A\subset \ML^H(A[y_1,\cdots, y_n]).$$
It is obvious that
$$A\supset  \ML^H(A[y_1,\cdots, y_n]).$$
Therefore the assertion follows.

(2) By the proof of Proposition \ref{xxpro4.5}
for $A=A(2)$ with $p$ not a root of unity,
the hypothesis of Lemma \ref{xxlem6.6} was
checked for $\{x_1,x_2,x_3\}=\{t_2,t_1,t_3\}$.
So $t_2\in \ML^H(A[y_1,\cdots, y_n])$.
Similarly, $t_3\in \ML^H(A[y_1,\cdots, y_n])$.
By the relation $t_1^2=t_3t_2-pt_2t_3$, we have
$t_1^2\in \ML^H(A[y_1,\cdots, y_n])$. A basic
property of locally nilpotent derivation implies
that $t_1\in \ML^H(A[y_1,\cdots, y_n])$.
The assertion follows.

(3,4) The assertion follows by
using a similar idea as above and the fact
proved in Proposition \ref{xxpro4.6}.
\end{proof}

Now we are ready to prove Corollary \ref{xxcor0.9}.

\begin{proof}[Proof of Corollary \ref{xxcor0.9}]
For algebras $A(1), A(2), A(6)$ and $A(7)$, see
Corollary \ref{xxcor6.8}. For algebras $A(3), A(4)$
and $A(5)$, we have seen that the center is trivial
[Lemma \ref{xxlem4.10}(3) and Proposition
\ref{xxpro4.11}(3,5)]. By \cite[Proposition 1.3]{BeZ},
these algebras are cancellative.
\end{proof}

We use the following diagram to illustrate the ideas involved.
\begin{small}
$$\begin{CD}
\mu_A @. @. @. {\text{ZCP}}  \\
@VV {\text{controls}}V @. @. @A {\text{answers}} AA\\
\Aut(A)@< \supseteq << \Aut_{uni}(A) @>>> \LND^H(A[y_1,\cdots,y_n]) @>>>
\ML^H(A[y_1,\cdots,y_n])
\end{CD}
$$
\end{small}
In the paper \cite{BeZ}, the authors use similar ideas with $\mu_A$
being replaced by the discriminant.

\section*{Acknowledgments}
This paper is a continuation of the project started in \cite{CWZ, CKWZ}.
The authors would like to thank Yan-Hong Bao, Kenneth Chan, Ji-Wei He, 
Chelsea Walton, Ellen Kirkman and Jim Kuzmanovich for many useful 
conversations on the subject. Some ideas  were generated by reading 
examples constructed by Ellen Kirkman and Jim Kuzmanovich.
The authors also thank the referee for his/her careful reading and 
valuable comments.
J.-F. L\"u was supported by  NSFC  (Grant No.11001245). X.-F. Mao was 
supported by NSFC (Grant No.11001056), the Key Disciplines of 
Shanghai Municipality (Grant No.S30104) and the Innovation Program of
Shanghai Municipal Education Commission (Grant No.12YZ031).
J.J. Zhang was supported by the US National Science
Foundation (NSF grant No. DMS 0855743 and  DMS 1402863).

\providecommand{\bysame}{\leavevmode\hbox to3em{\hrulefill}\thinspace}
\providecommand{\MR}{\relax\ifhmode\unskip\space\fi MR }
\providecommand{\MRhref}[2]{%
\href{http://www.ams.org/mathscinet-getitem?mr=#1}{#2} }
\providecommand{\href}[2]{#2}

\end{document}